\newtheorem{theorem}{Theorem}[section]
\newtheorem{proposition}[theorem]{Proposition}
\newtheorem{lemma}[theorem]{Lemma}
\newtheorem{corollary}[theorem]{Corollary}
\newtheorem{remark}[theorem]{Remark}
\newtheorem{example}[theorem]{Example}
\newtheorem{definition}[theorem]{Definition}
\begin{document}
\title{The quaternionic/hypercomplex-correspondence}
\author{%
Vicente Cort{\' e}s
and
Kazuyuki Hasegawa
}

\maketitle
\begin{abstract}
Given a quaternionic manifold $M$ 
with a certain $\mathrm{U}(1)$-symmetry, we construct
a hypercomplex manifold $M'$ of the same dimension. 
This construction generalizes 
the quaternionic K\"ahler/hyper-K\"ahler-correspondence. 
As an example of this construction, 
we obtain a compact homogeneous hypercomplex manifold which does not admit any 
hyper-K\"ahler structure. Therefore our construction is a  proper generalization 
of the quaternionic K\"ahler/hyper-K\"ahler-correspondence. 

%
\end{abstract}

\footnote[0]{
{\it 2010 Mathematics Subject Classification}. 
Primary 53C10. Secondary 53C56, 53C26.}
\footnote[0]{
{\it Keywords and phrases.} quaternionic manifold, 
hypercomplex manifold, hypercomplex moment map, 
quaternionic/hypercomplex-correspondence. 
}

\section{Introduction}
\setcounter{equation}{0}

Let us recall that there exist constructions due to Andriy Haydys, called the QK/HK-correspondence and the HK/QK-correspondence,
which relate quaternionic K\"ahler manifolds to hyper-K\"ahler manifolds of the same dimension \cite{Haydys}. These constructions
have been generalized to include possibly indefinite metrics 
\cite{ACM, ACDM}. In this way the supergravity c-map metric and a one-parameter deformation thereof have been described as an application of the HK/QK-correspondence with indefinite initial hyper-K\"ahler data. 
Many complete quaternionic K\"ahler manifolds can be obtained in this way, see for instance \cite{CDJL} for co-homogeneity one examples.

The main result of this paper, see Theorem \ref{qh_correspond}, is a construction of a hypercomplex manifold 
from a quaternionic manifold with a ${\rm U}(1)$-action, 
which we may call the {\it quaternionic/hy\-per\-com\-plex\--correspondence} 
(Q/H-correspondence for short). This construction generalizes the 
QK/HK-correspondence. 

In \cite{S,J,PPS}, it is shown that with every 
quaternionic manifold $M$ 
one can associate an ${\mathbb H}^*/\{ \pm 1\}$-bundle 
over $M$ and a 
hypercomplex structure on the total space of the bundle. 
More precisely  \cite{PPS}, there exists  a one-parameter family of 
${\mathbb H}^*/\{ \pm 1\}$-bundles such that, given a quaternionic connection on 
$M$, each of the bundles is endowed with an almost hypercomplex structure. 
For a particular choice of the parameter, 
the almost hypercomplex structure is integrable and independent 
of the connection. 
Here we will adopt a different point of view. 
Instead of a one-parameter family of bundles,  
we will define a single principal ${\mathbb H}^*/\{ \pm 1\}$-bundle, 
which we call 
the \emph{Swann bundle}, endowed with a 
one-parameter family of almost hypercomplex structures (still depending on a quaternionic connection). Again we 
find that, for a particular choice of the parameter, namely $c=-4(n+1)$, the almost hypercomplex structure is always integrable and independent 
of the connection, see Proposition \ref{dependence}. Here $4n=\dim M$. 
For all other values of the parameter, we show that  the almost hypercomplex structure is integrable 
if and only if all $I\in Q$, where $Q$ denotes 
the quaternionic structure, are skew-symmetric with respect to the skew-symmetric part of the Ricci-curvature, see Theorem
\ref{hypercomlex_cone}.

Now we briefly explain how we obtain the Q/H-correspondence. 
Given an infinitesimal automorphism 
$X$ of a quaternionic manifold $(M,Q,\nabla)$ endowed with a 
quaternionic connection $\nabla$, 
we show that the natural lift $\hat{X}$ of $X$ to the Swann bundle $\hat{M}$
preserves each member 
of the one-parameter family of almost hypercomplex structures. 
The next step is to perform a hypercomplex reduction with respect to $\hat{X}$. 
Recall that hypercomplex reduction 
was introduced by Dominic Joyce  in \cite{J}. It is defined as the quotient of a level set 
of a moment map by the group action. 
The construction is based on the notion of a moment map in this context as defined 
in  \cite{J}. Here we define the moment map 
for the infinitesimal automorphism $\hat{X}$ by the equation (\ref{mm:eq})
and analyse Joyce's conditions 
in Proposition \ref{moment_map}. Assuming that $\hat{X}$ generates a free 
$\mathrm{U}(1)$-action, we can finally perform the reduction obtaining a 
hypercomplex manifold $M'$. 
Otherwise, we can construct the hypercomplex structure 
on a submanifold transversal to the 
foliation defined by $\hat{X}$ (on some open submanifold of $\hat{M}$).

Examples of our Q/H-correspondence include compact homogeneous hypercomplex manifolds. 
Indeed, starting with a homogeneous quaternionic 
Hopf manifold  
\[ ({\mathbb R}^{>0} / \langle \lambda \rangle) \times
    \frac{{\rm Sp}(n) {\rm U}(1)}{{\rm Sp}(n-1) \triangle_{{\rm U}(1)}}, 
\]
we obtain a homogeneous hypercomplex Hopf manifold
\[ ({\mathbb R}^{>0} / \langle \lambda \rangle) \times
    \frac{{\rm Sp}(n)}{{\rm Sp}(n-1)}
\]
by the Q/H-correspondence, see Example \ref{Hopf}. 
Note that this hypercomplex manifold 
does not admit any hyper-K{\"a}hler structure  
for topological reasons. 
Therefore our construction is a  proper generalization 
of the QK/HK-correspondence. 

\section{Preliminaries}
\setcounter{equation}{0}

Throughout this paper, 
all manifolds are assumed to be smooth and without boundary 
and maps are assumed to be smooth 
unless otherwise mentioned. 
The space of sections of a vector bundle $E\rightarrow M$ 
is denoted by $\Gamma(E)$. 

We say that $M$ is a {\it quaternionic 
manifold} with the quaternionic structure $Q$ if 
$Q$ is a subbundle of ${\rm End}(T M)$ 
of rank $3$ which at every point $x\in M$ is spanned by endomorphisms  
$I_{1}$, $I_{2}$, $I_{3}\in \mathrm{End} (T_xM)$ satisfying
\begin{eqnarray}\label{quaternionic}
I_{1}^{2}=I_{2}^{2}=I_{3}^{2}=-\mathrm{id}, \,\, I_{1} I_{2}=-I_{2}I_{1}=I_{3}, 
\end{eqnarray}
and there exists a torsion-free connection $\nabla$ on $M$ 
such that $\nabla$ preserves $Q$, that is, 
$\nabla_X \Gamma(Q) \subset \Gamma(Q)$ for all $X\in\Gamma (TM)$. 
Note that we use the same letter $\nabla$ for the connection on ${\rm End}(T M)$ 
induced by $\nabla$ if there is no confusion. 
Such a torsion-free connection 
$\nabla$ is called a {\it quaternionic connection} and 
the triplet $(I_{1}$, $I_{2}$, $I_{3})$ is called an 
{\it admissible frame} of $Q$ at $x$. 
The dimension of a quaternionic manifold $M$ is denoted by $4n$. 
Note that a quaternionic connection is not unique, in fact, 
there is the following result \cite{Fu,AM1}. 

\begin{lemma}\label{q_conn_eq}
Let $\nabla^{1}$ and $\nabla^{2}$ be quaternionic connections on $(M,Q)$. Then there exists a 1-form $\xi$ on $M$ such that 
\begin{eqnarray}\label{q_conn}
\nabla^{2}_{X} Y = \nabla^{1}_{X} Y +S^{\xi}_{X} Y
\end{eqnarray}
for all $X$, $Y \in \Gamma(T M)$, where $S^{\xi}$
is defined by 
\begin{align*}
S^{\xi}_{X}Y =\; 
& \xi(X)Y + \xi(Y)X - \xi(I_{1}X)I_{1}Y - \xi(I_{1}Y)I_{1}X \\
   & - \xi(I_{2}X)I_{2}Y - \xi(I_{2}Y)I_{2}X - \xi(I_{3}X)I_{3}Y - \xi(I_{3}Y)I_{3}X. 
\end{align*}
Conversely, for a given quaternionic connection $\nabla^{1}$, the connection 
$\nabla^{2}$ given by the equation above is also a quaternionic connection. 
\end{lemma}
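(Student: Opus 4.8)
The plan is to verify the two assertions of the lemma separately, each reducing to a computation with the tensor $S^\xi$. For the converse direction, I would start from a given quaternionic connection $\nabla^1$ and a $1$-form $\xi$, and check that $\nabla^2_X Y = \nabla^1_X Y + S^\xi_X Y$ is again a quaternionic connection. First I would observe that $S^\xi$ is symmetric in $X$ and $Y$ — this is visible from the defining formula, since every term appears together with its $X \leftrightarrow Y$ swap — so $\nabla^2$ has the same torsion as $\nabla^1$, hence is torsion-free. The substantive point is that $\nabla^2$ preserves $Q$. Since $\nabla^1$ already preserves $Q$, it suffices to show that the induced action of $S^\xi_X$ (as a zeroth-order operator on $\mathrm{End}(TM)$, namely $J \mapsto [S^\xi_X, J]$ where $S^\xi_X$ is regarded as the endomorphism $Y \mapsto S^\xi_X Y$) maps $\Gamma(Q)$ into $\Gamma(Q)$. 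So I would pick a local admissible frame $I_1, I_2, I_3$ and compute $[S^\xi_X, I_a]$ for $a = 1,2,3$, showing it is a linear combination of $I_1, I_2, I_3$ with coefficients that are (pointwise) $1$-forms evaluated on $X$. This is the key computation: using the quaternion relations \eqref{quaternionic} and expanding, the terms organize themselves so that the result lies in the span of the $I_a$; the $\xi(\cdot)X$ and $\xi(\cdot)Y$-type pieces combine to cancel the parts that would otherwise leave $Q$.

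For the direct assertion — that any two quaternionic connections differ by such an $S^\xi$ — I would argue as follows. Given $\nabla^1$ and $\nabla^2$, their difference $A_X Y := \nabla^2_X Y - \nabla^1_X Y$ is a tensor, symmetric in $X, Y$ because both connections are torsion-free. The condition that both preserve $Q$ means that for each local section $I$ of $Q$ we have $[A_X, I] \in \Gamma(Q)$, where again $A_X \in \mathrm{End}(TM)$ is $Y \mapsto A_X Y$. The task is then a pointwise linear-algebra problem: classify the symmetric tensors $A \in (T^*M \otimes T^*M \otimes TM)$, i.e. elements $A \in S^2 T^* \otimes T$ at a point, whose associated endomorphisms normalize $Q$. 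One shows that the space of such $A$ is exactly the image of the map $\xi \mapsto S^\xi$ from $T^*M$. Counting dimensions, $T^*M$ has dimension $4n$, and one checks $\xi \mapsto S^\xi$ is injective (e.g. by tracing: contracting $S^\xi_X Y$ suitably recovers a multiple of $\xi$, using $\dim M = 4n$ with $n \geq 1$), so the image is $4n$-dimensional; then one shows this matches the dimension of the space of admissible $A$. Alternatively, and more cleanly, one can avoid the dimension count by directly solving: given admissible $A$, define $\xi$ by an explicit trace formula applied to $A$, and verify $A = S^\xi$ by showing $A - S^\xi$ is a symmetric tensor whose endomorphisms normalize $Q$ and which is trace-free in the relevant sense, then invoke a vanishing argument (the representation $S^2 T^* \otimes T$ decomposed under $\mathrm{GL}(n,\mathbb H)\mathrm{Sp}(1)$ has the normalizing-and-tracefree part equal to zero, or one checks it by hand).

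The main obstacle I anticipate is the representation-theoretic or direct-computation heart of the direct assertion: showing that the \emph{only} symmetric difference tensors compatible with the quaternionic structure are those of the form $S^\xi$. The converse (that every $S^\xi$ works) is a finite, if slightly tedious, expansion using \eqref{quaternionic}; by contrast, proving there are no others requires either invoking the decomposition of $S^2 T^* \otimes T$ as a module over $\mathfrak{gl}(n,\mathbb H) \oplus \mathfrak{sp}(1)$ and identifying the summand that can normalize $Q$ while contributing to a torsion-free connection, or carrying out an explicit but careful argument with adapted frames. Since the excerpt cites \cite{Fu,AM1} for this lemma, I would in practice lean on the representation-theoretic description — the space of torsion-free connections preserving $Q$ is an affine space modeled on $T^*M$ via precisely this $S^\xi$ formula — and present the converse computation in full while referencing the classification for the direct statement. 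I would also take care to note explicitly that the formula for $S^\xi$ is independent of the choice of admissible frame $(I_1, I_2, I_3)$ used to write it, since a change of admissible frame is an $\mathrm{SO}(3)$-rotation and the expression $\sum_a \xi(I_a X) I_a Y$ is invariant under such rotations; this invariance is what makes $S^\xi$ well-defined globally on $M$ and is worth checking before anything else.
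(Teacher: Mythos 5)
The paper does not prove this lemma at all: it is quoted verbatim from the cited references \cite{Fu,AM1} (Fujimura; Alekseevsky--Marchiafava), so there is no in-paper argument to compare against. Your outline is the standard proof from those sources and is sound. In particular, the converse direction works exactly as you describe: writing $S^{\xi}_{X}=\xi(X)\,\mathrm{id}+X\otimes\xi-\sum_{a}\xi(I_{a}X)I_{a}-\sum_{a}(I_{a}X)\otimes(\xi\circ I_{a})$, the rank-one pieces cancel identically in the commutator and one gets the clean identity $[S^{\xi}_{X},I_{1}]=2\xi(I_{2}X)I_{3}-2\xi(I_{3}X)I_{2}\in Q$ (and cyclically), while symmetry in $X,Y$ gives torsion-freeness; your trace computation $\mathrm{Tr}\,S^{\xi}_{X}=4(n+1)\xi(X)$ is also correct and is the formula the paper uses later. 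You are right that the only genuinely nontrivial step is the direct assertion --- that \emph{every} symmetric $Q$-admissible difference tensor in $S^{2}T^{*}\otimes T$ lies in the image of $\xi\mapsto S^{\xi}$ --- and that this requires either the $\mathrm{GL}(n,\mathbb{H})\mathrm{Sp}(1)$-module decomposition or an explicit frame computation; deferring that to \cite{AM1} is exactly what the paper itself does, so no gap relative to the paper.
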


An {\it almost hypercomplex manifold} is defined to be a manifold $M$ endowed 
with 3 almost complex structures 
$I_{1}$, $I_{2}$, $I_{3}$ satisfying the quaternionic relations (\ref{quaternionic}). 
If $I_{1}$, $I_{2}$, $I_{3}$ are integrable, then $M$ is called 
a {\it hypercomplex manifold}. 
There exists a unique torsion-free connection on a hypercomplex manifold 
for which the hypercomplex structures are parallel. 
It is called the {\it Obata connection} \cite{O}. 
Obviously, hypercomplex manifolds are quaternionic manifolds with 
$Q=\langle I_{1},I_{2},I_{3} \rangle$.



\section{The canonical family of almost hypercomplex structures on 
the Swann bundle $\hat{M}$}
\setcounter{equation}{0}
In this section we will define a 
principal ${\mathbb R}^{>0} \times {\rm SO}(3)$-bundle $\hat{M}\rightarrow M$ over a quaternionic manifold 
$(M,Q)$ equipped with a quaternionic connection $\nabla$ and endow $\hat{M}$ with a one-parameter family of almost hypercomplex structures depending on the quaternionic connection $\nabla$. 
Then we will study the integrability of the hypercomplex structure and its dependence (or independence) on the choice of $\nabla$ for different values of the parameter. 
\subsection{The principal bundle $\hat{M}\rightarrow M$}
Let $S$ be the principal ${\rm SO(3)}$-bundle of admissible frames 
 $(I_{1}$, $I_{2}$, $I_{3})$ over a quaternionic manifold $(M,Q)$. 
The principal action $\tau$ of $g \in {\rm SO}(3)$ 
is given by $\tau(s,g)=sg^{\varepsilon}$
for $s=(I_{1},I_{2},I_{3}) \in S$, where $\varepsilon=1$ (resp.  $\varepsilon=-1$) 
if $S$ is considered as a right (resp. left)-principal bundle. 
The bundle projection of $S$ is denoted by $\pi_{S}$. 
We take a basis $(e_{1},e_{2},e_{3})$ of 
${\mathbb R}^{3} \cong {\rm Im}\, {\mathbb H} \cong \mathfrak{sp}(1)
\cong \mathfrak{so}(3)$ so that  
\[ [e_{\alpha},e_{\beta}]=2e_{\gamma}  \] 
for any cyclic permutation $(\alpha,\beta,\gamma)$. 
Hereafter $(\alpha,\beta,\gamma)$ 
will be always a cyclic permutation, whenever 
the three letters appear in an expression. 
A quaternionic connection induces a principal connection 
$\theta:TS \to \mathfrak{so}(3)$ and we denote 
$\theta=\sum \theta^{\alpha}e_{\alpha}$. 
Moreover we consider the principal ${\mathbb R}^{>0}$-bundle 
$S_{0}:=(\Lambda^{4n}(T^{\ast}M) \backslash \{ 0 \})/\{ \pm 1 \}$ over $M$, 
where 
${\mathbb R}^{>0}=\{ a \in {\mathbb R} \mid a >0 \}$. 
The principal ${\mathbb R}^{>0}$-action $\tau_{0}$
on $S_{0}$ is given by scalar multiplication 
$\tau_{0} (\rho,a):= \rho a^{\varepsilon}$ ($\varepsilon=\pm 1$) 
for $\rho \in S_{0}$ and $a \in {\mathbb R}^{>0}$. 
The bundle projection of $S_{0}$ is denoted by $\pi_{S_{0}}$. 
A quaternionic connection induces 
also a principal connection
$\theta_{0}:TS_{0} \to {\mathbb R} = \mathrm{Lie}\, (\mathbb{R}^{>0})$. 
The product $S_{0} \times S$ is a principal 
${\mathbb R}^{>0} \times {\rm SO}(3)$-bundle over 
$M \times M$ whose principal action is $\tau_{0} \times \tau$.   
The ${\mathbb R}^{4} ( \cong {\mathbb R} \oplus \mathfrak{so}(3))$-valued 
$1$-form $(\theta_{0} \circ pr_{TS_{0}} ,  \theta \circ pr_{TS})
=(\theta_{0} \circ pr_{TS_{0}}, 
\theta_{1}\circ pr_{TS},\theta_{2}\circ pr_{TS},\theta_{3}\circ pr_{TS})$ 
is a principal connection on $S_{0} \times S$, where 
$pr_{TS_{0}}$ (resp. $pr_{TS}$) is the projection from 
$T(S_{0} \times S) \cong TS_{0} \times TS$ onto $TS_{0}$ (resp. $TS$). 
Note that the Lie group 
${\mathbb R}^{>0} \times {\rm SO}(3)
={\mathbb H}^{\ast}/\{ \pm 1\}$.

Let $\triangle: M \to M \times M$ be the diagonal map defined by 
$\triangle(x)=(x,x)$ for each $x \in M$. 
The pullback bundle 
\[ \hat{M}:=\triangle^{\ast} (S_{0} \times S)
= \{ (x,(\rho , s))\in M \times (S_0 \times S) \mid x= \pi_{S_0}(\rho ) = \pi_S (s)\}\] 
is a principal 
${\mathbb R}^{>0} \times {\rm SO}(3)$-bundle over 
$M$ and $\bar{\theta}:=\triangle_{\#}^{\ast}
(\theta_{0}\circ pr_{TS_{0}},\theta \circ pr_{TS})$
is a principal connection on $\hat{M}$, 
where $\triangle_{\#} : \hat{M} \to S_{0} \times S$ 
is the canonical bundle map. 
The bundle projection of $\hat{M}$ onto $M$ is denoted by $\hat{\pi}$. 
Using the bundle projections $\hat{\pi}$, $\pi_{S_{0}}$, $\pi_{S}$
and the principal connections $\bar{\theta}$, $\theta_{0}$, $\theta$, 
we have 
the decomposition 
\begin{equation}\label{hori_vert}
T \hat{M} = \bar{\cal V} \oplus \bar{\cal H}, \,\, 
T S_{0}={\cal V}_{0} \oplus {\cal H}_{0}, \,\,
T S={\cal V} \oplus {\cal H}, 
\end{equation}
where $\bar{\cal V}=\mathrm{Ker} \, \hat{\pi}_{\ast}$, 
$\bar{\cal H}=\mathrm{Ker} \, \bar{\theta}$ and so on. 
It holds that 
$(\triangle_{\#})_{\ast}(\bar{\cal V}_{(x,(\rho,s))})
=({\cal V}_{0})_{\rho} \times {\cal V}_{s}$ and 
$(\triangle_{\#})_{\ast}(\bar{\cal H}_{(x,(\rho,s))}) \subset 
({\cal H}_{0})_{\rho} \times {\cal H}_{s}$ for 
each $(x,(\rho,s)) \in \hat{M}$. 
Set $\triangle_{S}:=pr_{TS} \circ (\triangle_{\#})_{\ast}$
and $\triangle_{S_{0}}:=pr_{TS_{0}} \circ (\triangle_{\#})_{\ast}$. 
The principal actions on $\hat{M}$, $S_{0} \times S$, $S_{0}$ and $S$ 
induce fundamental vector fields. We denote by $\widetilde{A}$ the fundamental 
vector field corresponding to a Lie algebra element $A$, irrespective of the manifold on which the 
vector field is defined, and set $Z_{\alpha}=\widetilde{e}_{\alpha}$ $(\alpha=1,2,3)$.
Note that $[Z_{\alpha},Z_{\beta}]
=2 \varepsilon Z_{\gamma}$.

\subsection{The canonical family of almost hypercomplex structures}
Let $(M,Q)$ be a quaternionic manifold, $\nabla$ a quaternionic connection and $\hat{\pi} : \hat{M} \rightarrow M$ the 
principal ${\mathbb R}^{>0} \times {\rm SO}(3)$-bundle with connection $\bar\theta$ constructed in the previous subsection. 
In this subsection, 
we define a canonical family of almost 
hypercomplex structures on $\hat{M}$
and consider their integrability. 

Set $e_{0}:=1 \in  {\mathbb R}\, (\cong T_{1} {\mathbb R}^{>0} )$ 
and $Z_{0}^{c}:=c \, \widetilde{e}_{0}$ 
for a nonzero real number $c$.
We denote the horizontal lifts relative to the connections 
$\bar{\theta}$, $\theta$, $\theta_{0}$ by 
$(\,\, \cdot \,\,)^{\bar{h}}$, $(\,\, \cdot \,\,)^{h}$, 
$(\,\, \cdot \,\,)^{h_{0}}$ , respectively. 
An almost hypercomplex structure 
$(\hat{I}^{\bar{\theta}, c}_{1},\hat{I}^{\bar{\theta}, c}_{2},
\hat{I}^{\bar{\theta}, c}_{3})$ 
on $\hat{M}$ is defined by 
\[ \hat{I}^{\bar{\theta}, c}_{\alpha}Z^{c}_{0}=- Z_{\alpha},\quad  
\hat{I}^{\bar{\theta}, c}_{\alpha}Z_{\alpha}= Z_{0}^{c}, \quad  
\hat{I}^{\bar{\theta}, c}_{\alpha}Z_{\beta}=Z_{\gamma}, \quad  
\hat{I}^{\bar{\theta}, c}_{\alpha}Z_{\gamma}=-Z_{\beta} \]
and
\[ (\hat{I}^{\bar{\theta}, c}_{\alpha})_{(x,(\rho,s))} (X)
=(I_{\alpha} (\hat{\pi}_{\ast} X))^{\bar{h}}_{(x,(\rho,s))} \]
for all horizontal vector $X$ at $(x,(\rho,s)) \in \hat{M}$, 
where $s=(I_{1},I_{2},I_{3})$. 
Note that the triple 
$(\hat{I}^{\bar{\theta}, c}_{1},\hat{I}^{\bar{\theta}, c}_{2},\hat{I}^{\bar{\theta}, c}_{3})$
depends on the connection form $\bar{\theta}$
and $c$. 

\begin{lemma}\label{component}
For any horizontal lift $X^{\bar{h}} \in \bar{\cal H}_{(x,(\rho,s))}$ at 
$(x,(\rho,s)) \in \hat{M}$, we have
$(\triangle_{\#})_{\ast} X^{\bar{h}}$ $= (X^{h_{0}}_{\rho},X^{h}_{s})$. 
In particular, it holds
$(\triangle_{\#})_{\ast}
((\hat{I}^{\bar{\theta}, c}_{\alpha})_{(x,(\rho,s))} (X^{\bar{h}}))
=((I_{\alpha} X)^{h_{0}}_{\rho}
,(I_{\alpha} X)^{h}_{s}), 
$
where $s=(I_{1},I_{2},I_{3})$. 
As a consequence, the horizontal lift $X^{\bar{h}}$ of a vector field $X$ on $M$
is $\triangle_\#$-related to the vector field $(X^{h_0},X^{h})$, which is 
the horizontal lift of $(X,X)${\rm :} 
\[ (\triangle_\#)_\ast X^{\bar{h}} = (X^{h_0},X^{h}) \circ \triangle_\#.\] 
\end{lemma}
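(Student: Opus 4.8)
The plan is to unwind the definitions of the several pullback and horizontal-lift operations and reduce everything to the defining property of $\triangle_\#$ as a bundle map. First I would recall that $\bar\theta = \triangle_\#^\ast(\theta_0\circ pr_{TS_0},\theta\circ pr_{TS})$, so that by definition a vector $X^{\bar h}\in\bar{\mathcal H}_{(x,(\rho,s))}=\mathrm{Ker}\,\bar\theta$ pushes forward under $(\triangle_\#)_\ast$ into $\mathrm{Ker}(\theta_0\circ pr_{TS_0},\theta\circ pr_{TS})=(\mathcal H_0)_\rho\times\mathcal H_s$; combined with the fact (stated just before the lemma) that $(\triangle_\#)_\ast(\bar{\mathcal H})\subset(\mathcal H_0)\times(\mathcal H_s)$, this shows $(\triangle_\#)_\ast X^{\bar h}$ is horizontal in both factors. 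Next, since $\hat\pi = \pi_{S_0}\circ\triangle_{S_0} = \pi_S\circ\triangle_S$ at the level of projections, applying $(\pi_{S_0})_\ast$, resp. $(\pi_S)_\ast$, to $pr_{TS_0}(\triangle_\#)_\ast X^{\bar h}$, resp. $pr_{TS}(\triangle_\#)_\ast X^{\bar h}$, returns $\hat\pi_\ast X^{\bar h}=X$. A horizontal vector over a point of $S_0$ (resp. $S$) projecting to $X$ is, by definition and uniqueness of horizontal lifts, exactly $X^{h_0}_\rho$ (resp. $X^h_s$). Hence $(\triangle_\#)_\ast X^{\bar h}=(X^{h_0}_\rho,X^h_s)$, which is the first assertion.

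For the second assertion I would simply apply this identity to the horizontal vector $I_\alpha(\hat\pi_\ast X)\in T_xM$ in place of $X$: by the very definition of $\hat I^{\bar\theta,c}_\alpha$ on horizontal vectors, $(\hat I^{\bar\theta,c}_\alpha)_{(x,(\rho,s))}(X^{\bar h})=(I_\alpha X)^{\bar h}_{(x,(\rho,s))}$ when $X$ is (the value of) a horizontal lift and $s=(I_1,I_2,I_3)$, and then the first part gives $(\triangle_\#)_\ast\big((I_\alpha X)^{\bar h}\big)=((I_\alpha X)^{h_0}_\rho,(I_\alpha X)^h_s)$.

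The final, $\triangle_\#$-relatedness statement is a pointwise reformulation: for a vector field $X$ on $M$, the vector field $(X^{h_0},X^h)$ on $S_0\times S$ is by construction the horizontal lift of the vector field $(X,X)$ on $M\times M$, and the equality $(\triangle_\#)_\ast X^{\bar h}=(X^{h_0},X^h)\circ\triangle_\#$ of vector fields along $\triangle_\#$ is exactly the content of the first assertion read at every point of $\hat M$, together with naturality of horizontal lifts under pullback of the connection (i.e. $\triangle^\ast$ of a connection has as horizontal spaces the $(\triangle_\#)_\ast$-preimages of the horizontal spaces upstairs).

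I do not expect a serious obstacle here; the statement is essentially a compatibility bookkeeping result. The one point requiring care is keeping the two factors $S_0$ and $S$ (and their connections $\theta_0$ and $\theta$) straight and checking that $pr_{TS_0}\circ(\triangle_\#)_\ast$ and $pr_{TS}\circ(\triangle_\#)_\ast$ land in the respective horizontal distributions — i.e. that $\triangle_\#$ really does intertwine $\bar\theta$ with $(\theta_0\circ pr_{TS_0},\theta\circ pr_{TS})$, which is immediate from the definition $\bar\theta=\triangle_\#^\ast(\cdots)$ but must be invoked explicitly.
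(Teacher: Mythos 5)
Your proposal is correct and follows essentially the same route as the paper: both arguments show that $(\triangle_{\#})_{\ast}X^{\bar{h}}$ is horizontal in $S_{0}\times S$ because $\bar{\theta}=\triangle_{\#}^{\ast}(\theta_{0}\circ pr_{TS_{0}},\theta\circ pr_{TS})$, that it projects to $(X,X)$ via $(\pi_{S_{0}}\times\pi_{S})\circ\triangle_{\#}=\triangle\circ\hat{\pi}$, and then conclude by uniqueness of horizontal lifts, with the second and third assertions following exactly as you describe. The only cosmetic point is that your identity ``$\hat\pi=\pi_{S_{0}}\circ\triangle_{S_{0}}$'' should be phrased at the level of the bundle maps (as the paper does) rather than the tangent-level projections $\triangle_{S_{0}}$, $\triangle_{S}$, but this does not affect the argument.
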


\begin{proof}
$(\triangle_{\#})_{\ast} X^{\bar{h}}$ and
$(X^{h_{0}},X^{h})$ are horizontal vectors of $S_{0} \times S$, 
since applying the connection form $(\theta_{0} \circ pr_{TS_{0}}, \theta \circ pr_{TS})$ on 
both vectors gives zero. On the other hand, applying  
$(\pi_{S_{0}} \times \pi_{S})_{\ast}$ on both vectors gives $(X,X)$
because of $(\pi_{S_{0}} \times \pi_{S}) 
\circ \triangle_{\#}=\triangle \circ \hat{\pi}$. 
This proves 
$(\triangle_{\#})_{\ast} X^{\bar{h}}=(X^{h_{0}},X^{h}) \circ \triangle_{\#}$. 
Now it is easy to obtain $(\triangle_{\#})_{\ast}
((\hat{I}^{\bar{\theta}, c}_{\alpha})_{(x,(\rho,s))} (X^{\bar{h}}))
=((I_{\alpha} X)^{h_{0}}_{\rho},(I_{\alpha} X)^{h}_{s}) 
$ using that $(\hat{I}^{\bar{\theta}, c}_{\alpha})_{(x,(\rho,s))} (X^{\bar{h}})
=(I_{\alpha} X)^{\bar{h}}$.  
\end{proof}

\begin{lemma}\label{dependence_hypercomplex}
Let $\nabla^{1}$ and $\nabla^{2}=\nabla^{1}+S^{\xi}$ be quaternionic connections on $(M,Q)$, where 
$\xi \in \Gamma (T^*M)$. We denote the almost hypercomplex structure 
defined above with respect to $\nabla^{i}$ $(i=1,2)$ and $c \neq 0$ by 
$\hat{I}_{\alpha}^{i,c}$ $(\alpha=1,2,3)$. Then we have
\[
   \hat{I}_{\alpha}^{1,c}-\hat{I}_{\alpha}^{2,c}
=\varepsilon \left( 1  
   + \frac{4(n+1)}{c} \right) 
   \left( (\hat{\pi}^{\ast} \xi) \otimes Z_{\alpha} 
+  ((\hat{\pi}^{\ast} (\xi \circ I_{\alpha}))  \otimes Z_{0}^{c} \right)
\]
at each point $(x,(\rho, s)) \in \hat{M}$, where $s=(I_{1},I_{2},I_{3})$. 
\end{lemma}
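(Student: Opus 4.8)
The plan is to compute both almost hypercomplex structures $\hat I^{i,c}_\alpha$ on each of the two types of summand in the decomposition $T\hat M = \bar{\mathcal V}\oplus\bar{\mathcal H}$ and to compare them. On the vertical part $\bar{\mathcal V}$, which is spanned by $Z_0^c, Z_1, Z_2, Z_3$, the definition of $\hat I^{\bar\theta,c}_\alpha$ is purely algebraic and involves neither $\nabla$ nor $\xi$; hence $\hat I^{1,c}_\alpha$ and $\hat I^{2,c}_\alpha$ agree there and the difference tensor annihilates $\bar{\mathcal V}$. So the entire content of the formula is carried by the horizontal part, and what actually varies with $\nabla$ is the horizontal distribution itself (equivalently, the connection form $\bar\theta$).

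The next step is to make the dependence of the horizontal lift on $\xi$ explicit. First I would record how the principal connections $\theta$ on $S$ (i.e. $\theta^1,\theta^2,\theta^3$) and $\theta_0$ on $S_0$ change under $\nabla^1\mapsto\nabla^2=\nabla^1+S^\xi$. The curvature/connection data on $S$ is governed by how $\nabla$ differentiates an admissible frame $(I_1,I_2,I_3)$; a direct computation with the formula for $S^\xi_X Y$ shows that $\nabla^2_X I_\alpha - \nabla^1_X I_\alpha$ is a specific combination of the $I_\beta$'s with coefficients built from $\xi$ and $\xi\circ I_\gamma$, giving $\theta^{2,\alpha}-\theta^{1,\alpha}$ on $S$ as $\hat\pi^*(\text{something in }\xi)$. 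Similarly, since $\nabla$ acts on $\Lambda^{4n}T^*M$ by the trace, and the trace of $Y\mapsto S^\xi_X Y$ is $2(2n+1)\xi(X)=(4n+2)\xi(X)$ — wait, more carefully: $\mathrm{tr}(S^\xi_X) = \xi(X)\cdot 4n + \xi(X) + (\text{diagonal terms from the }I_\alpha\text{ pieces})$; carrying this out yields $\mathrm{tr}(S^\xi_X)=2(2n+1)\xi(X)$... I would simply compute it and get a universal constant times $\xi(X)$, which is exactly where the $4(n+1)$ and the factor $1/c$ (from $Z_0^c=c\widetilde e_0$) enter. Combining, the two horizontal lifts of a tangent vector $\hat\pi_*Y$ differ by a vertical correction: $Y^{\bar h,1} - Y^{\bar h,2} = \varepsilon\big(a\,\xi(Y) e_0 + b\sum_\alpha \xi(?\,)e_\alpha\big)^{\widetilde{\ }}$ at the relevant point, with $a,b$ the constants just found.

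With that in hand, I would apply $\hat I^{1,c}_\alpha - \hat I^{2,c}_\alpha$ to a horizontal lift $Y^{\bar h,1}$. By definition $\hat I^{1,c}_\alpha Y^{\bar h,1} = (I_\alpha\hat\pi_*Y)^{\bar h,1}$, whereas $\hat I^{2,c}_\alpha Y^{\bar h,1} = \hat I^{2,c}_\alpha\big(Y^{\bar h,2} + (\text{vertical})\big) = (I_\alpha\hat\pi_* Y)^{\bar h,2} + \hat I^{2,c}_\alpha(\text{vertical})$, and both the passage from $\bar h,1$ to $\bar h,2$ in the first term and the action of $\hat I^{2,c}_\alpha$ on the vertical correction are computed by the two bullet-steps above. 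The vertical corrections will reorganize, using the relations $\hat I^{\bar\theta,c}_\alpha Z^c_0 = -Z_\alpha$ and $\hat I^{\bar\theta,c}_\alpha Z_\alpha = Z^c_0$ and $\xi\circ I_\alpha$ appearing where $I_\alpha$ was hit, precisely into the tensor $\varepsilon(1+\tfrac{4(n+1)}{c})\big((\hat\pi^*\xi)\otimes Z_\alpha + (\hat\pi^*(\xi\circ I_\alpha))\otimes Z_0^c\big)$. One should also check the formula on vertical vectors, where — as noted — both sides vanish, consistent with $\hat\pi^*\xi$ and $\hat\pi^*(\xi\circ I_\alpha)$ killing $\bar{\mathcal V}$.

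The main obstacle is bookkeeping: correctly computing $\theta^{2,\alpha}-\theta^{1,\alpha}$ and $\theta_0^2-\theta_0^1$ from the explicit $S^\xi$ — in particular getting the constant $4(n+1)$ right from the trace over $\Lambda^{4n}T^*M$ and tracking the sign $\varepsilon$ through the conventions for left/right principal actions (recall $[Z_\alpha,Z_\beta]=2\varepsilon Z_\gamma$) and through Lemma \ref{component}. Once the change-of-connection formulas for $\theta$ and $\theta_0$ are pinned down, the rest is a short linear-algebra manipulation in the fiber using the defining relations of $\hat I^{\bar\theta,c}_\alpha$.
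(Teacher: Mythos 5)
Your proposal is correct and follows essentially the same route as the paper's proof: observe that the difference tensor vanishes on the vertical distribution, compute the change of the connection forms $\theta_\delta$ and $\theta_0$ under $\nabla^1\mapsto\nabla^2=\nabla^1+S^{\xi}$ (via $\nabla I_\alpha$ and the trace $\mathrm{Tr}\,S^{\xi}_X=4(n+1)\xi(X)$, which fixes your hedged constant), express $Y^{\bar h_1}-Y^{\bar h_2}$ as the resulting vertical correction, and reassemble using the defining relations of $\hat I^{\bar\theta,c}_\alpha$ on $Z_0^c,Z_\alpha$. This is exactly the paper's argument, so only the bookkeeping remains to be carried out.
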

\begin{proof}
We consider any point $(x,(\rho,s)) \in \hat{M}$, $s=(I_{1},I_{2},I_{3})$,  
and omit the reference point in the proof. 
The corresponding connection forms induced by $\nabla^{i}$ are denoted by 
$\bar{\theta}^{i}$, $\theta^{i}=(\theta^{i}_{1},\theta^{i}_{2},\theta^{i}_{3})$, 
$\theta_{0}^{i}$ $(i=1,2)$, respectively. 
The tangent bundle $T \hat{M}$ is decomposed into 
$T\hat{M}=\bar{\cal V} \oplus \bar{\cal H}^{1} 
=\bar{\cal V} \oplus \bar{\cal H}^{2}$, 
where $\bar{\cal H}^{i}=\mathrm{Ker}\, \bar{\theta}^{i}$. We express 
any tangent vector $X$ of $\hat{M}$ as 
\begin{eqnarray*}
X = Y^{\bar{h}_{i}} 
           + \sum_{\delta=1}^{3} a_{\delta}^{i} Z_{\delta}
           + b^{i} Z_{0}^{c}, 
\end{eqnarray*}
where $Y\in TM$. 
By the definition of $\hat{I}_{\alpha}^{i,c}$, we see
\[ \hat{I}_{\alpha}^{i,c}(X)
   = (I_{\alpha }Y)^{\bar{h}_{i}} 
           + a_{\alpha}^{i} Z_{0}^{c}
           + a_{\beta}^{i} Z_{\gamma} - a_{\gamma}^{i} Z_{\beta}
           -  b^{i} Z_{\alpha}. \]
Since 
\begin{align*}
\bar{\theta}^{1}(X)
&=\sum_{\delta=1}^{3} a_{\delta}^{1} e_{\delta}
           + c b^{1} e_{0}
=\bar{\theta}^{1}(Y^{\bar{h}_{2}}) 
      + \sum_{\delta=1}^{3} a_{\delta}^{2} e_{\delta}
           + c b^{2} e_{0} \\
&=\sum_{\delta=1}^{3} \theta_{\delta}^{1}(\triangle_{S}Y^{\bar{h}_{2}}) e_{\delta}
    +\theta_{0}^{1}(\triangle_{S_{0}} Y^{\bar{h}_{2}}) e_{0}
          + \sum_{\delta=1}^{3} a_{\delta}^{2} e_{\delta}
           + c b^{2} e_{0},                 
\end{align*}
we have $b^{1}=b^{2}+(1/c) \theta_{0}^{1}
(\triangle_{S_{0}}Y^{\bar{h}_{2}})$ and 
$a_{\delta}^{1}=a_{\delta}^{2}+\theta^{1}_{\delta}
(\triangle_{S}Y^{\bar{h}_{2}})$ ($\delta=1,2,3$). 
Therefore it holds
\begin{align*}
\hat{I}_{\alpha}^{1,c}(X) 
=\; & (I_{\alpha }Y)^{\bar{h}_{1}} 
       + a_{\alpha}^{1} Z_{0}^{c}
       + a_{\beta}^{1} Z_{\gamma} - a_{\gamma}^{1} Z_{\beta}
       -  b^{1} Z_{\alpha} \\
=\; & (I_{\alpha }Y)^{\bar{h}_{1}} 
       + (a_{\alpha}^{2}+\theta^{1}_{\alpha}(\triangle_{S}
       Y^{\bar{h}_{2}}) ) Z_{0}^{c}
       + (a_{\beta}^{2} +\theta^{1}_{\beta}(\triangle_{S}
       Y^{\bar{h}_{2}}) ) Z_{\gamma} \\
& - (a_{\gamma}^{2} +\theta^{1}_{\gamma}
  (\triangle_{S} Y^{\bar{h}_{2}}) ) Z_{\beta}
       -  (b^{2} +(1/c) \theta_{0}^{1}
       (\triangle_{S_{0}} Y^{\bar{h}_{2}}) )Z_{\alpha} \\     
=\; & (I_{\alpha }Y)^{\bar{h}_{1}} - (I_{\alpha }Y)^{\bar{h}_{2}} 
+ \hat{I}_{\alpha}^{2,c}(X) \\
  & + \theta^{1}_{\alpha}(\triangle_{S} Y^{\bar{h}_{2}}) Z_{0}^{c}
       + \theta^{1}_{\beta}(\triangle_{S} Y^{\bar{h}_{2}}) Z_{\gamma} 
       - \theta^{1}_{\gamma}(\triangle_{S} Y^{\bar{h}_{2}}) Z_{\beta}
       - (1/c) \theta_{0}^{1}(\triangle_{S_{0}} Y^{\bar{h}_{2}} )Z_{\alpha}. 
\end{align*}
Let $s_{0}:U \to S_{0}$ and $s:U \to S$ 
be local sections defined on an open set $U$ in $M$. 
Then $\bar{s}:=(s_{0},s) \circ \triangle$ is a local section of $\hat{M}$. 
The pull backs of 
$\theta^{i}$, 
$\theta_{0}^{i}$ to $U$ are denoted by $\theta^{i,U}$ and $\theta_{0}^{i,U}$. 
If we define the one forms $\theta^{i,U}_{\alpha}$ by 
$\theta^{i,U}=s^{\ast} \theta^{i}
=(1/2) \sum (\theta^{i,U}_{\alpha}) e_{\alpha}$. 
From Lemma \ref{q_conn_eq} and 
\[ 
\nabla^{i} I_{\alpha}
=\varepsilon 
( \theta^{i,U}_{\gamma} \otimes I_{\beta}
-\theta^{i,U}_{\beta} \otimes I_{\gamma}) \quad (i=1,2)
\]
one can check that 
\begin{eqnarray} \label{theta2-theta1}
\theta^{2,U}_{\delta} - \theta^{1,U}_{\delta} 
&=&- 2 \varepsilon (\xi \circ I_{\delta}) \quad  
(\delta=1,2,3), \\  
\theta^{2,U}_{0} - \theta^{1,U}_{0} &=& 4 \varepsilon (n+1)  \xi. \nonumber
\end{eqnarray}
It is easy to see that
\begin{eqnarray}\label{Y1-Y2}
       Y^{\bar{h}_{1}}-Y^{\bar{h}_{2}} 
&=& \bar{s}_{\ast}(Y)-v_{1}(\bar{s}_{\ast}(Y)) 
- \bar{s}_{\ast}(Y) + v_{2}(\bar{s}_{\ast}(Y))  \nonumber\\
&=& - \sum_{\delta=1}^{3} 
(\theta^{1}_{\delta} (s_{\ast}Y) - \theta^{2}_{\delta} (s_{\ast}Y) )Z_{\delta}
       -(1/c)  (\theta^{1}_{0} (s_{0 \ast}Y) 
       - \theta^{2}_{0} (s_{0 \ast}Y)) Z_{0}^{c}  \nonumber\\
&=& - \frac{1}{2}\sum_{\delta=1}^{3} (\theta^{1,U}_{\delta} (Y) - \theta^{2,U}_{\delta} (Y) )Z_{\delta}
       -(1/c)  (\theta^{1,U}_{0} (Y) - \theta^{2,U}_{0} (Y)) Z_{0}^{c} \nonumber\\
       &\stackrel{(\ref{theta2-theta1})}{=}& 
       - \varepsilon \sum_{\delta=1}^{3} \xi(I_{\delta}Y) Z_{\delta} 
+ \frac{4 \varepsilon (n+1)}{c} \xi(Y)Z_{0}^{c} ,  
\end{eqnarray}
where $v_{i}:T \hat{M} \to \bar{\cal V}$ 
is the projection with respect to $\bar{\theta}^{i}$ $(i=1,2)$.
Finally we obtain 
\begin{align*}
  \hat{I}_{\alpha}^{1,c}(X) &-\hat{I}_{\alpha}^{2,c}(X) \\ 
  =&  - \varepsilon 
  \sum_{\delta=1}^{3} \xi(I_{\delta}I_{\alpha}Y) Z_{\delta} 
+ \frac{4 \varepsilon (n+1)}{c} \xi(I_{\alpha}Y)Z_{0}^{c} \\
&+ \theta^{1}_{\alpha}(\triangle_{S} Y^{\bar{h}_{2}}) Z_{0}^{c}
       + \theta^{1}_{\beta}(\triangle_{S} Y^{\bar{h}_{2}}) Z_{\gamma} 
       - \theta^{1}_{\gamma}(\triangle_{S} Y^{\bar{h}_{2}}) Z_{\beta}
       - (1/c) \theta_{0}^{1}(\triangle_{S_{0}} Y^{\bar{h}_{2}} )Z_{\alpha}\\
\stackrel{(*)}{=}&- \varepsilon 
\sum_{\delta=1}^{3} \xi(I_{\delta}I_{\alpha}Y) Z_{\delta} 
+ \frac{4 \varepsilon (n+1)}{c} \xi(I_{\alpha}Y)Z_{0}^{c} \\
  &+  \varepsilon \xi (I_{\alpha}Y) Z_{0}^{c} 
      +  \varepsilon \xi(I_{\beta}Y)Z_{\gamma} 
      -  \varepsilon \xi(I_{\gamma}Y)Z_{\beta} 
      +\frac{4 \varepsilon (n+1)}{c} \xi(Y) Z_{\alpha} \\
=& \left( \varepsilon \xi(Y) 
      + \frac{4(n+1) \varepsilon }{c} \xi(Y) \right) Z_{\alpha} 
      + \left( \varepsilon \xi(I_{\alpha}Y)  
      + \frac{4(n+1)\varepsilon }{c} \xi (I_{\alpha}Y) \right) Z_{0}^{c}, 
\end{align*}
where in the step $(*)$ of the calculation we have computed
\[ \theta_\alpha^1(\triangle_SY^{\bar{h}_2}) = \theta_\alpha^1(Y^{h_2}) = \theta_\alpha^1(Y^{h_2}-Y^{h_1}) \stackrel{(\ref{Y1-Y2})}{=} 
\varepsilon \xi (I_\alpha Y)
\]
and similarly for the other terms. 
\end{proof}

The following proposition is an immediate consequence of Lemma \ref{dependence_hypercomplex}, 
cf.\ the result with \cite[Proposition 3.3]{PPS}. 

\begin{proposition}\label{dependence}
The almost hypercomplex structure 
is independent of the choice of quaternionic connection if and only if 
$c=- 4 (n+1)$.
\end{proposition}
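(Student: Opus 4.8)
The plan is to derive Proposition \ref{dependence} directly from the formula established in Lemma \ref{dependence_hypercomplex}. Recall that lemma gives, for any two quaternionic connections $\nabla^1$ and $\nabla^2 = \nabla^1 + S^\xi$,
\[
   \hat{I}_{\alpha}^{1,c}-\hat{I}_{\alpha}^{2,c}
=\varepsilon \left( 1 + \frac{4(n+1)}{c} \right)
   \left( (\hat{\pi}^{\ast} \xi) \otimes Z_{\alpha}
+  (\hat{\pi}^{\ast} (\xi \circ I_{\alpha}))  \otimes Z_{0}^{c} \right).
\]
So the almost hypercomplex structure is independent of the choice of quaternionic connection precisely when the right-hand side vanishes for every $\xi \in \Gamma(T^*M)$.

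For the \emph{if} direction: substituting $c = -4(n+1)$ makes the scalar factor $1 + \frac{4(n+1)}{c} = 1 - 1 = 0$, so $\hat{I}_\alpha^{1,c} = \hat{I}_\alpha^{2,c}$ for all $\alpha$ and all $\xi$. Since by Lemma \ref{q_conn_eq} any two quaternionic connections differ by some $S^\xi$, this shows the family $(\hat I_1^{\bar\theta,c},\hat I_2^{\bar\theta,c},\hat I_3^{\bar\theta,c})$ does not depend on $\nabla$ when $c=-4(n+1)$.

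For the \emph{only if} direction (the contrapositive): suppose $c \neq -4(n+1)$, so the scalar $\varepsilon(1 + \frac{4(n+1)}{c})$ is nonzero. It then suffices to exhibit a single quaternionic manifold, a connection $\nabla^1$, and a $1$-form $\xi$ for which the tensor $(\hat\pi^*\xi)\otimes Z_\alpha + (\hat\pi^*(\xi\circ I_\alpha))\otimes Z_0^c$ is not identically zero — e.g. take any $\xi$ that is nonzero at a point $x$; then evaluated on a vector $Y$ with $\xi(Y)\neq 0$ the $Z_\alpha$-component is nonzero (here one uses that $Z_\alpha$ and $Z_0^c$ are pointwise linearly independent, being fundamental vector fields for independent Lie algebra elements, so no cancellation occurs). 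Hence $\hat I_\alpha^{1,c}\neq \hat I_\alpha^{2,c}$, i.e. the structure genuinely depends on $\nabla$.

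I do not expect any real obstacle here: the proposition is a direct reading-off of the scalar coefficient in Lemma \ref{dependence_hypercomplex}. The only point requiring a word of care is the \emph{only if} direction, where one must be sure the tensorial expression multiplying the scalar is not itself always zero; this is handled by the linear independence of $Z_\alpha$ and $Z_0^c$ in $\bar{\mathcal V}$ together with the fact that for a generic quaternionic manifold there exist nonzero $1$-forms $\xi$. Consequently the proof can be stated in just a couple of lines.
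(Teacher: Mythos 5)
Your proof is correct and follows the same route as the paper, which simply observes that the proposition is an immediate consequence of Lemma \ref{dependence_hypercomplex}: the scalar factor $1+\tfrac{4(n+1)}{c}$ vanishes exactly when $c=-4(n+1)$. Your extra remark on the \emph{only if} direction (pointwise linear independence of $Z_\alpha$ and $Z_0^c$ ensuring the tensorial factor is nonzero for some $\xi$) is a valid and welcome precision, but does not change the argument.
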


Next we investigate transformation properties of the structures $\hat{I}_{\alpha}^{\bar{\theta},c}$ $(\alpha=1,2,3)$ under
the principal action.

\begin{lemma}\label{Lie_der_hyp_cpx}
We have $L_{Z_0}\hat{I}^{\bar{\theta}, c}_{\alpha}=L_{Z_{\alpha}} \hat{I}^{\bar{\theta}, c}_{\alpha}=0$, 
$L_{Z_{\alpha}} \hat{I}^{\bar{\theta}, c}_{\beta}
=2 \varepsilon \hat{I}^{\bar{\theta}, c}_{\gamma}$ and  
$L_{Z_{\alpha}} \hat{I}^{\bar{\theta}, c}_{\gamma}
=-2 \varepsilon \hat{I}^{\bar{\theta}, c}_{\beta}$.
\end{lemma}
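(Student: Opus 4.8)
The plan is to compute the Lie derivatives by working on the trivializing data: each $\hat I^{\bar\theta,c}_\alpha$ is determined by how it acts on the three fundamental fields $Z_1,Z_2,Z_3$, on $Z_0^c$, and on horizontal lifts $X^{\bar h}$. Since $Z_0,Z_\alpha$ are fundamental vector fields for the ${\mathbb R}^{>0}\times{\rm SO}(3)$-action, one has $L_{Z_0}X^{\bar h}=0$ and $[Z_\alpha,X^{\bar h}]=0$ for any horizontal lift $X^{\bar h}$ of a vector field on $M$ (the connection $\bar\theta$, and hence its horizontal distribution, is invariant under the principal action, and $X^{\bar h}$ is itself invariant because $\hat\pi_\ast X^{\bar h}=X$ is $\hat\pi$-projectable and the action is over the identity on $M$). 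Therefore, for any of these Lie derivatives $L_W$ with $W\in\{Z_0,Z_1,Z_2,Z_3\}$, one has $(L_W\hat I_\alpha)(X^{\bar h}) = L_W(\hat I_\alpha X^{\bar h}) - \hat I_\alpha(L_W X^{\bar h}) = L_W((I_\alpha X)^{\bar h}) - 0 = 0$. So each $L_W\hat I_\alpha$ annihilates the horizontal distribution, and it suffices to evaluate $L_W\hat I_\alpha$ on the vertical fields $Z_0^c,Z_1,Z_2,Z_3$.

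On the vertical side everything is purely Lie-algebraic. Recall $[Z_\alpha,Z_\beta]=2\varepsilon Z_\gamma$, that $Z_0$ is central among these (it comes from the abelian factor ${\mathbb R}^{>0}$ and commutes with the ${\rm SO}(3)$-fundamental fields), and $Z_0^c=c\,\widetilde e_0$ so $[Z_\alpha,Z_0^c]=0$ as well. For $W=Z_0$: since $[Z_0,Z_\delta]=0$ for all $\delta$ including $\delta=0$, applying $(L_{Z_0}\hat I_\alpha)(Z_\delta)=L_{Z_0}(\hat I_\alpha Z_\delta)-\hat I_\alpha(L_{Z_0}Z_\delta)$ gives $0$ on every vertical generator (each $\hat I_\alpha Z_\delta$ is again $\pm$ one of the $Z$'s, and $L_{Z_0}$ kills all of them), so $L_{Z_0}\hat I_\alpha=0$. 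For $W=Z_\alpha$ acting on $\hat I_\alpha$: use the defining relations $\hat I_\alpha Z_0^c=-Z_\alpha$, $\hat I_\alpha Z_\alpha=Z_0^c$, $\hat I_\alpha Z_\beta=Z_\gamma$, $\hat I_\alpha Z_\gamma=-Z_\beta$, together with $[Z_\alpha,Z_0^c]=0$, $[Z_\alpha,Z_\alpha]=0$, $[Z_\alpha,Z_\beta]=2\varepsilon Z_\gamma$, $[Z_\alpha,Z_\gamma]=-2\varepsilon Z_\beta$; a short check shows $(L_{Z_\alpha}\hat I_\alpha)$ vanishes on each generator, e.g. on $Z_\beta$ it is $[Z_\alpha,Z_\gamma]-\hat I_\alpha[Z_\alpha,Z_\beta]=-2\varepsilon Z_\beta-\hat I_\alpha(2\varepsilon Z_\gamma)=-2\varepsilon Z_\beta+2\varepsilon Z_\beta=0$, and similarly for the others. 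This yields $L_{Z_\alpha}\hat I_\alpha=0$.

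For $W=Z_\alpha$ acting on $\hat I_\beta$: again evaluate on the vertical generators using the defining action of $\hat I_\beta$ and the structure constants, and compare term by term with $2\varepsilon\hat I_\gamma$ evaluated on the same generators; for instance $(L_{Z_\alpha}\hat I_\beta)(Z_0^c)=[Z_\alpha,\hat I_\beta Z_0^c]-\hat I_\beta[Z_\alpha,Z_0^c]=[Z_\alpha,-Z_\beta]-0=-2\varepsilon Z_\gamma=2\varepsilon\hat I_\gamma Z_0^c$, since $\hat I_\gamma Z_0^c=-Z_\gamma$, and one checks the remaining three generators the same way, all giving the factor $2\varepsilon$. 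This proves $L_{Z_\alpha}\hat I_\beta=2\varepsilon\hat I_\gamma$, and the antisymmetric counterpart $L_{Z_\alpha}\hat I_\gamma=-2\varepsilon\hat I_\beta$ follows either by the identical computation or by applying the already-established relation with the roles of $\beta$ and $\gamma$ exchanged (which flips the sign of the structure constant). The only mild subtlety — and the one place worth stating carefully rather than by invoking "obviously" — is the invariance of the horizontal lift, i.e. that $X^{\bar h}$ is genuinely invariant under the flows of the $Z_\delta$ so that the horizontal part of each Lie derivative vanishes; this rests on the fact that $\bar\theta$ is a principal connection, hence its horizontal distribution is preserved by the principal action, and on the action being fibrewise (covering $\mathrm{id}_M$), so nothing else is needed.
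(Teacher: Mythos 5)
There is a genuine error in the horizontal part of your argument, and it sits exactly at the point you flagged as the ``only mild subtlety.'' You are right that the horizontal lift $X^{\bar h}$ of a fixed vector field $X$ on $M$ is invariant under the principal action, so $[Z_\delta,X^{\bar h}]=0$. But the step $L_W(\hat I_\alpha X^{\bar h})=L_W((I_\alpha X)^{\bar h})=0$ is false for $W=Z_\beta$ with $\beta\neq\alpha$: the vector field $\hat I_\alpha X^{\bar h}$ is \emph{not} the horizontal lift of a single vector field on $M$. At a point $(x,(\rho,s))$ with $s=(I_1,I_2,I_3)$ it equals $(I_\alpha X)^{\bar h}$ where $I_\alpha$ is read off from the frame $s$, and $s$ rotates under the $\mathrm{SO}(3)$-action. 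Consequently $\hat I_\alpha X^{\bar h}$ is invariant only under the flows of $Z_0$ and $Z_\alpha$ (which fix $I_\alpha$), not under the flow of $Z_\beta$ or $Z_\gamma$. Indeed, with $\phi_t$ the flow of $Z_\alpha$ one has $(\hat I_\beta Y^{\bar h})_{\phi_t(p)}=(\cos 2\varepsilon t)(I_\beta Y)^{\bar h}+(\sin 2\varepsilon t)(I_\gamma Y)^{\bar h}$, whence $L_{Z_\alpha}(\hat I_\beta Y^{\bar h})=2\varepsilon(I_\gamma Y)^{\bar h}=2\varepsilon\,\hat I_\gamma Y^{\bar h}\neq 0$. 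This flow computation is the actual content of the lemma on the horizontal distribution (and is how the paper proves it); your proposal replaces it with a vanishing claim.

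The error also makes your argument internally inconsistent: you assert that every $L_W\hat I_\alpha$ annihilates the horizontal distribution and then conclude $L_{Z_\alpha}\hat I_\beta=2\varepsilon\hat I_\gamma$ from the vertical generators alone --- but $2\varepsilon\hat I_\gamma$ manifestly does not annihilate horizontal vectors, so the two claims cannot both hold. Your purely Lie-algebraic computations on $Z_0^c,Z_1,Z_2,Z_3$ are correct and match the paper's remark that the vertical part is an easy check; what is missing is the nontrivial horizontal computation via the explicit action of $\phi_t$ on the admissible frame, which supplies the $2\varepsilon\hat I_\gamma$ (respectively $0$, $-2\varepsilon\hat I_\beta$) on $\bar{\mathcal H}$ and completes the proof.
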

\begin{proof}
Note first that the principal action generated by the vector fields $Z_a$, $a=0,\ldots , 3$, preserves the horizontal and vertical distributions. 
Moreover, the central vector field $Z_0$ commutes with the principal action and thus 
preserves the three canonical almost complex structures $\hat{I}^{\bar{\theta}, c}_{\alpha}$. 

Next we observe that it is easy to check the above equations on the vertical distribution by evaluating them on 
$Z_{0}^{c}, \dots, Z_{3}$. So it only remains to check them on the horizontal distribution. 
Let $\{ \phi_{t} \}_{t \in \mathbb{R}}$ be the flow of $Z_{1}$. 
Since 
\[ \phi_{t}((x,(\rho,s)))
=(x,(\rho,(I_{1}, (\cos 2\varepsilon t)I_{2}+(\sin 2\varepsilon t)I_{3}, 
(-\sin 2 \varepsilon t)I_{2}+(\cos 2 \varepsilon t)I_{3}))) \]
for $(x,(\rho,s)) \in \hat{M}$, where $s=(I_{1},I_{2},I_{3})$ 
and the horizontal lift of any vector field or tangent vector of $M$ is invariant 
under $\phi_{t}$, we have  
\begin{align*}
       (L_{Z_{1}} \hat{I}^{\bar{\theta}, c}_{2})_{(x,(\rho,s))}(Y^{h}) 
&= [Z_{1},\hat{I}^{\bar{\theta}, c}_{2}Y^{h}]_{(x,(\rho,s))} \\
&= \left. \frac{d}{dt} \phi_{t \ast}^{-1} 
((\hat{I}^{\bar{\theta}, c}_{2}Y^{h})_{\phi_{t}((x,(\rho,s)))}) \right|_{t=0} \\
&= \left. \frac{d}{dt} \phi_{t \ast}^{-1} 
               ((\cos 2\varepsilon t)(I_{2}Y)^{h}_{\phi_{t}((x,(\rho,s)))}
               +(\sin 2\varepsilon t)(I_{3}Y)^{h}_{\phi_{t}((x,(\rho,s)))}) \right|_{t=0} \\
&= \left. \frac{d}{dt} \left( (\cos 2\varepsilon t)(I_{2}Y)^{h}_{(x,(\rho,s))}
+(\sin 2 \varepsilon t)(I_{3}Y)^{h}_{(x,(\rho,s))} \right) \right|_{t=0} \\
&= 2 \varepsilon (I_{3}Y)^{h}_{(x,(\rho,s))}
= 2 \varepsilon (\hat{I}^{\bar{\theta}, c}_{3})_{(x,(\rho,s))} Y^{h} 
\end{align*}
and similarly $L_{Z_{1}} \hat{I}^{\bar{\theta}, c}_{1}=0$, which imply 
$L_{Z_{1}} \hat{I}^{\bar{\theta}, c}_{3}=-2 \varepsilon \hat{I}^{\bar{\theta}, c}_{2}$. 
\end{proof}

The Nijenhuis tensor for $\hat{I}^{\bar{\theta}, c}_{\alpha}$ 
is given by
\[ N^{\alpha}(U,V)=[U,V]
+\hat{I}^{\bar{\theta}, c}_{\alpha}[\hat{I}^{\bar{\theta}, c}_{\alpha}U,V]
+\hat{I}^{\bar{\theta}, c}_{\alpha}[U,\hat{I}^{\bar{\theta}, c}_{\alpha}V]
-[\hat{I}^{\bar{\theta}, c}_{\alpha}U,\hat{I}^{\bar{\theta}, c}_{\alpha}V] \]
for $U$, $V \in \Gamma(T \hat{M})$. 
Let $\bar{\Omega}$ (resp. $\Omega$) be the curvature form of 
$\bar{\theta}$ (resp. $\theta$). Take a local section $s:U \to S$ defined 
an open set $U$ of $M$. 
The pull back of  $\Omega=\sum_{\alpha=1}^3 \Omega_\alpha e_\alpha$ by $s$ is denoted by $\Omega^{U}$. 
Since the curvature form is horizontal, we have 
\begin{eqnarray}\label{curvatute_holizontal}
\varepsilon \Omega|_{s(U)}= \pi_{S}^{\ast} \Omega^{U}|_{s(U)}. 
\end{eqnarray}
If we define the two-forms $\Omega_\alpha^U$ by  $\Omega^{U} =(1/2) \sum \Omega_\alpha^Ue_\alpha$ and denote by 
$\bar{\nabla}$ the connection on $Q$ 
induced by $\nabla$, 
then we  have \[ R^{\bar{\nabla}}_{X,Y} I_{\alpha}
=[R^{\nabla}_{X,Y},I_{\alpha}] = \left[ \frac12 \sum \Omega_\delta^U (X,Y)I_\delta,I_\alpha\right] 
=\Omega^{U}_{\gamma}(X,Y)I_{\beta}- \Omega^{U}_{\beta}(X,Y)I_{\gamma}, \] 
which implies 
\begin{eqnarray}\label{curvature_form}
\Omega^{U}_{\alpha} (X,Y) 
= -\frac{1}{2n}{\rm Tr}I_{\alpha} R^{\nabla}_{X, Y} 
\end{eqnarray}
for $X$, $Y \in TM$. 
In fact, multiplying the equation 
$R^{\nabla}_{X,Y} \circ I_{\alpha} 
-I_{\alpha} \circ R^{\nabla}_{X,Y} =
\Omega^{U}_{\gamma}(X,Y)I_{\beta}- \Omega^{U}_{\beta}(X,Y)I_{\gamma}$ with $I_\beta$,  
we obtain 
\[ I_{\beta} \circ R^{\nabla}_{X,Y} \circ I_{\alpha} 
+ I_{\gamma} \circ R^{\nabla}_{X,Y} 
=-\Omega^{U}_{\gamma}(X,Y) \mathrm{id} - \Omega^{U}_{\beta}(X,Y)I_{\alpha}. \]
Taking the trace proves (\ref{curvature_form}).  
Let $Ric^{\nabla}$ be the Ricci curvature of $\nabla$ and 
its symmetric (resp.\ anti-symmetric) part is denoted by 
$(Ric^{\nabla})^{s}$ (resp.\ ($Ric^{\nabla})^{a}$). 
The Nijenhuis tensors of the canonical almost complex structures on the bundle $\hat{M}$ over the quaternionic manifold 
$(M,Q,\nabla)$ are computed in the next lemma.

\begin{lemma}\label{Nijenhuis}
If $n>1$ or $Q$ is anti-self-dual provided $n=1$, 
we have
\begin{align}
 N^{\alpha}(& Z_{0}^{c},Z_{i}) = 0 \,\,\, \mbox{for} \,\,\,  1 \leq i \leq 3, \label{vv0} \\
 N^{\alpha}( & Z_{i},Z_{j}) = 0 \,\,\, \mbox{for} \,\,\,  1 \leq i,j \leq 3, \label{vv} \\
 N^{\alpha}( & Z_{0}^{c},X^{\bar{h}}) = 0,  \label{0vh} \\
 N^{\alpha}( & Z_{i},X^{\bar{h}}) = 0 
 \,\,\, \mbox{for} \,\,\,  1 \leq i \leq 3, \label{vh} \\
  \bar{\theta}( N^{\alpha} (X^{\bar{h}}, &Y^{\bar{h}})_{(x, (\rho, s))}) \label{vhh}  \\
&=  
\frac{4 \varepsilon (n+1)+  \varepsilon c}{ 2(n+1)} 
        \left( (Ric^{\nabla})^{a}(X,Y) 
        - (Ric^{\nabla})^{a}(I_{\alpha}X,I_{\alpha}Y) \right) e_{0} \nonumber \\
 &      -\frac{4 \varepsilon (n+1)+ \varepsilon c}{ 2c(n+1)} 
        \left( (Ric^{\nabla})^{a}(X,I_{\alpha}Y) 
        + (Ric^{\nabla})^{a}(I_{\alpha}X,Y) \right) e_{\alpha}\,\,\, \mbox{and} \nonumber \\        
  &\hat{\pi}_{\ast}(N^{\alpha}(X^{\bar{h}},Y^{\bar{h}})) = 0 \,\,\, \mbox{for} \,\,\, X, Y \in \Gamma(TM),\label{hhh}               
\end{align}
where 
$(x,(\rho, s)) \in \hat{M}$ $(s=(I_{1},I_{2},I_{3}))$. \\ 
\end{lemma}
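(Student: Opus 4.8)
The plan is to compute the Nijenhuis tensor $N^\alpha$ of $\hat I^{\bar\theta,c}_\alpha$ on all pairs of vector fields drawn from the frame $\{Z_0^c,Z_1,Z_2,Z_3,X^{\bar h},Y^{\bar h}\}$, exploiting that $N^\alpha$ is tensorial so it suffices to evaluate on such a (local) frame. The first block of identities, \eqref{vv0}, \eqref{vv}, \eqref{0vh} and \eqref{vh}, I would obtain from the explicit action of $\hat I^{\bar\theta,c}_\alpha$ on the vertical fields together with the bracket relations $[Z_a,Z_b]=2\varepsilon Z_c$ (for cyclic indices in $1,2,3$), $[Z_0,Z_b]=0$, and the Lie-derivative formulas of Lemma \ref{Lie_der_hyp_cpx}. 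Concretely, for \eqref{vh} one writes $N^\alpha(Z_i,X^{\bar h})$ in terms of $L_{Z_i}\hat I^{\bar\theta,c}_\alpha$ acting on $X^{\bar h}$ and $[Z_i,X^{\bar h}]$; the latter bracket is vertical-free (the horizontal lift is invariant under the principal flow, as used in the proof of Lemma \ref{Lie_der_hyp_cpx}, so actually $[Z_i,X^{\bar h}]=0$ for $X$ a vector field on $M$), and $L_{Z_i}\hat I^{\bar\theta,c}_\alpha$ is either $0$ or $\pm 2\varepsilon\hat I^{\bar\theta,c}_\bullet$, so everything collapses. These are routine but must be done carefully index by index.

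The real content is \eqref{vhh} and \eqref{hhh}, the purely horizontal term. Here I would lift $X,Y$ to vector fields on $M$ and use that $[X^{\bar h},Y^{\bar h}]=[X,Y]^{\bar h}-\bar\Omega(X^{\bar h},Y^{\bar h})^{\sim}$, i.e. the vertical defect of the bracket of horizontal lifts is (minus) the curvature of $\bar\theta$, which via \eqref{curvatute_holizontal} and \eqref{curvature_form} is expressed through $\mathrm{Tr}\,I_\delta R^\nabla_{X,Y}$, and the $e_0$-component through the curvature $\Omega_0$ of $\theta_0$, which is the trace $\mathrm{Tr}\,R^\nabla_{X,Y}$ up to normalization (the determinant/volume bundle $S_0$). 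For the Nijenhuis combination $[X^{\bar h},Y^{\bar h}]+\hat I^{\bar\theta,c}_\alpha[\hat I^{\bar\theta,c}_\alpha X^{\bar h},Y^{\bar h}]+\hat I^{\bar\theta,c}_\alpha[X^{\bar h},\hat I^{\bar\theta,c}_\alpha Y^{\bar h}]-[\hat I^{\bar\theta,c}_\alpha X^{\bar h},\hat I^{\bar\theta,c}_\alpha Y^{\bar h}]$, the horizontal parts combine into $\hat\pi_\ast$ of the Nijenhuis tensor of $Q$-compatible $I_\alpha$ evaluated with $\nabla$; since $\nabla$ is torsion-free and preserves $Q$, the quaternionic integrability (automatic for $n>1$, and the anti-self-duality hypothesis for $n=1$) forces this to vanish, giving \eqref{hhh}. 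For the vertical part \eqref{vhh}, each of the four brackets contributes a curvature term; assembling them one gets a linear combination of $\Omega_0^U(X,Y)$, $\Omega_0^U(I_\alpha X, I_\alpha Y)$ and the analogous mixed $\Omega_\alpha^U$-terms. Converting $\Omega_0^U$ to $\mathrm{Tr}\,R^\nabla$ and $\Omega_\alpha^U$ to $\mathrm{Tr}\,I_\alpha R^\nabla$ via \eqref{curvature_form}, and then recognizing that the relevant traces of the curvature operator are exactly the antisymmetric Ricci $(Ric^\nabla)^a$ (the symmetric part drops out of the antisymmetrized combinations $X,Y \mapsto \cdot - (I_\alpha \cdot, I_\alpha \cdot)$ and $\cdot \mapsto (\cdot, I_\alpha \cdot)+(I_\alpha\cdot,\cdot)$ by the first Bianchi identity and the $Q$-invariance of $\nabla$), one reads off the coefficients $\tfrac{4\varepsilon(n+1)+\varepsilon c}{2(n+1)}$ and $-\tfrac{4\varepsilon(n+1)+\varepsilon c}{2c(n+1)}$.

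The main obstacle I anticipate is precisely this last bookkeeping: correctly tracking the horizontal-lift defects through all four brackets of the Nijenhuis expression, keeping the $e_0$-normalization (the factor $1/c$ built into $Z_0^c=c\,\widetilde e_0$) consistent, and identifying which traces of $R^\nabla$ survive. The identity $\mathrm{Tr}\,R^\nabla_{X,Y}=$ (a multiple of) $(Ric^\nabla)^a(X,Y)$, and the fact that $\mathrm{Tr}\,I_\alpha R^\nabla_{X,Y}$ reassembles into $(Ric^\nabla)^a(X,I_\alpha Y)+(Ric^\nabla)^a(I_\alpha X,Y)$ after antisymmetrization, are the crucial algebraic facts; these follow from the Bianchi identity together with the decomposition of the curvature of a quaternionic connection, and from the definition of the Ricci tensor as a trace. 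Once these are in hand, the coefficient $4(n+1)+c$ emerges as a common factor — consistent with Proposition \ref{dependence} and anticipating Theorem \ref{hypercomlex_cone} — because the curvature of $\theta$ and of $\theta_0$ enter with weights $-\tfrac1{2n}$ and the Swann-bundle normalization $4(n+1)$ respectively. I would organize the computation by first recording the vertical defects $v(\,[X^{\bar h},Y^{\bar h}]\,)=-\tfrac12\sum_\delta\Omega_\delta^U(X,Y)Z_\delta-\tfrac1c\Omega_0^U(X,Y)Z_0^c$ and its images under replacing $X,Y$ by $I_\alpha X, I_\alpha Y$, then substituting into the Nijenhuis formula and collecting the $Z_\delta$- and $Z_0^c$-coefficients.
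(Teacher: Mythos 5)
Your overall strategy coincides with the paper's: evaluate $N^{\alpha}$ on the frame $\{Z_{0}^{c},Z_{1},Z_{2},Z_{3},X^{\bar{h}},Y^{\bar{h}}\}$, dispose of the vertical--vertical and vertical--horizontal pairings via Lemma \ref{Lie_der_hyp_cpx} and the bracket relations (indeed $[Z_{i},X^{\bar{h}}]=0$), and reduce the horizontal--horizontal term to curvature data of $\bar{\theta}$ using that the vertical defect of $[X^{\bar{h}},Y^{\bar{h}}]$ is $-\bar{\Omega}(X^{\bar{h}},Y^{\bar{h}})$, then convert to Ricci curvature via (\ref{curvature_form}) and $\varepsilon\, d\theta_{0}^{U}(X,Y)=\mathrm{Tr}\,R^{\nabla}_{X,Y}=-2(Ric^{\nabla})^{a}(X,Y)$. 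Up to that point your plan is the paper's proof.

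There is, however, one genuine gap, and it concerns exactly where the hypothesis ``$n>1$, or $Q$ anti-self-dual when $n=1$'' enters. You attach that hypothesis to (\ref{hhh}); in fact the horizontal component $\hat{\pi}_{\ast}(N^{\alpha}(X^{\bar{h}},Y^{\bar{h}}))$ vanishes for purely torsion-free reasons (the same computation as for the horizontal part of the Nijenhuis tensor on the twistor space), with no dimension restriction. The hypothesis is needed instead for (\ref{vhh}): a priori $\bar{\theta}(N^{\alpha}(X^{\bar{h}},Y^{\bar{h}}))$ has components along all four of $e_{0},e_{\alpha},e_{\beta},e_{\gamma}$, and your ``assembling'' step only accounts for the $e_{0}$- and $e_{\alpha}$-parts. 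The $e_{\beta}$- and $e_{\gamma}$-components are (up to relabelling of arguments)
\[
A_{\alpha}(X^{\bar{h}},Y^{\bar{h}})
= -\Omega_{\beta}(X^{h},Y^{h})
    +\Omega_{\beta}((I_{\alpha}X)^{h},(I_{\alpha}Y)^{h})
  +\Omega_{\gamma}((I_{\alpha}X)^{h},Y^{h})
        +\Omega_{\gamma}(X^{h},(I_{\alpha}Y)^{h}),
\]
and their vanishing is not bookkeeping: it is equivalent to $\mathrm{Tr}\,[R^{\nabla(0,2)I_{\alpha}}_{X,Y},I_{\alpha}]I_{\gamma}=0$, which follows from the identity $[R^{\nabla(0,2)I}_{X,Y},I]=0$ for all $I\in{\cal Z}$ --- automatic for $n>1$ and equivalent to anti-self-duality of $Q$ for $n=1$ (cf.\ \cite{AG}). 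Without this step the right-hand side of (\ref{vhh}), which has no $e_{\beta},e_{\gamma}$ terms, cannot be reached. The rest of your outline (the use of (\ref{defB})--(\ref{defOmega}) to see that the symmetric Ricci part drops out of $\Omega^{U}_{\alpha}(I_{\alpha}X,Y)+\Omega^{U}_{\alpha}(X,I_{\alpha}Y)$, and the emergence of the common factor $4(n+1)+c$) goes through as you describe.
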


\begin{proof}
We write $\hat{I}_{\alpha}=\hat{I}^{\bar{\theta},c}_{\alpha}$ for simplicity 
in the proof of this lemma. 
It is easy to see that (\ref{vv0}--\ref{vh}) hold by the definition 
of the almost hypercomplex structure on $\hat{M}$ and Lemma \ref{Lie_der_hyp_cpx}. 
In fact, for example, we have
$N^{\alpha}(Z_{\beta},Z_{\gamma}) 
= [Z_\beta, Z_\gamma] + [Z_{\gamma},Z_{\beta} ] =0$
and 
$N^{\alpha}(Z_{\beta},X^{\bar{h}}) 
= \hat{I}_{\alpha}[Z_{\beta},\hat{I}_{\alpha} X^{\bar{h}}]
      -[\hat{I}_{\alpha}Z_{\beta},\hat{I}_{\alpha} X^{\bar{h}}] 
= \hat{I}_{\alpha} ( -2 \varepsilon \hat{I}_{\gamma} )(X^{\bar{h}})
- 2 \varepsilon \hat{I}_{\beta} (X^{\bar{h}}) = 0.$
The other equations are proved similarly. Next we show (\ref{vhh}). 
It holds 
$(\theta_{i} \circ \hat{I}_{\alpha})(Z^{c}_{0}) = - \delta_{i \alpha}$, 
$(\theta_{i} \circ \hat{I}_{\alpha})(Z_{\alpha}) = c \delta_{i 0}$, 
$(\theta_{i} \circ \hat{I}_{\alpha})(Z_{\beta}) = \delta_{i \gamma}$, 
$(\theta_{i} \circ \hat{I}_{\alpha})(Z_{\gamma}) = - \delta_{i \beta}$. 
Using this and Lemma \ref{component}, we have
\begin{align*}
   &  \bar{\theta} (\hat{I}_{\alpha}[\hat{I}_{\alpha}X^{\bar{h}},Y^{\bar{h}}])  \\
=& (\bar{\theta} \circ \hat{I}_{\alpha}) 
      (\sum_{i=1}^{3} \theta_{i} (\triangle_{S} 
      [\hat{I}_{\alpha}X^{\bar{h}},Y^{\bar{h}}]) Z_{i} 
      + \frac{1}{c} \theta_{0}(\triangle_{S_{0}} 
      [\hat{I}_{\alpha}X^{\bar{h}},Y^{\bar{h}}])Z^{c}_{0}) \\
=& \sum_{j=0}^{3} (\sum_{i=1}^{3} \theta_{i} 
       (\triangle_{S} [\hat{I}_{\alpha}X^{\bar{h}},Y^{\bar{h}}])
       (\theta_{j} \circ \hat{I}_{\alpha}) (Z_{i}) e_{j} 
       + \frac{1}{c} \theta_{0}
       (\triangle_{S_{0}} [\hat{I}_{\alpha}X^{\bar{h}},Y^{\bar{h}}])
       (\theta_{j} \circ \hat{I}_{\alpha}) (Z^{c}_{0}) e_{j} )\\
=& \sum_{j=0}^{3} 
       (\theta_{\alpha} (\triangle_{S}
       [\hat{I}_{\alpha}X^{\bar{h}},Y^{\bar{h}}])  c \delta_{j0}e_{j}
       +\theta_{\beta} (\triangle_{S} 
       [\hat{I}_{\alpha}X^{\bar{h}},Y^{\bar{h}}])  
       \delta_{j \gamma}e_{j} \\
 &  -\theta_{\gamma} (\triangle_{S}
      [\hat{I}_{\alpha}X^{\bar{h}},Y^{\bar{h}}])  
      \delta_{j \beta}e_{j} 
       +\frac{1}{c} \theta_{0}(\triangle_{S_{0}} 
       [\hat{I}_{\alpha}X^{\bar{h}},Y^{\bar{h}}])
                        (-\delta_{j \alpha}) e_{j})
\end{align*}
As a consequence of Lemma \ref{component}, we have
$\triangle_{S_{0}} [\hat{I}_{\alpha}X^{\bar{h}},Y^{\bar{h}}] 
= [ (I_{\alpha}X)^{h_{0}},Y^{h_{0}}] |_{\triangle_{\#}(\hat{M})}$ and 
$\triangle_{S} [\hat{I}_{\alpha}X^{\bar{h}},Y^{\bar{h}}]
=[ (I_{\alpha}X)^{h},Y^{h}] |_{\triangle_{\#}(\hat{M})}$. 
By $\bar{\Omega}=\Omega+d \theta_{0}
=\sum_{\delta=1}^{3} \Omega_{\delta} e_{\delta}+(d \theta_{0}) e_{0}$, 
it holds
\begin{align*}
\bar{\theta} (\hat{I}_{\alpha}[\hat{I}_{\alpha}X^{\bar{h}},Y^{\bar{h}}])
=& -c \Omega_{\alpha} ((I_{\alpha}X)^{h},Y^{h}) e_{0} 
      -\Omega_{\beta} ((I_{\alpha}X)^{h},Y^{h}) e_{\gamma} \\
  &    +\Omega_{\gamma} ((I_{\alpha}X)^{h},Y^{h}) e_{\beta}
      +\frac{1}{c}d \theta_{0}((I_{\alpha}X)^{h_{0}},Y^{h_{0}})e_{\alpha}.
\end{align*}
Defining
\begin{align*}
A_{\alpha}(X^{\bar{h}},Y^{\bar{h}})
=& -\Omega_{\beta}(\triangle_{S} X^{\bar{h}},\triangle_{S} Y^{\bar{h}})
    +\Omega_{\beta}(\triangle_{S} \hat{I}_{\alpha}X^{\bar{h}},
    \triangle_{S} \hat{I}_{\alpha}Y^{\bar{h}}) \\
  &+\Omega_{\gamma}(\triangle_{S} \hat{I}_{\alpha}X^{\bar{h}},\triangle_{S} Y^{\bar{h}})
        +\Omega_{\gamma}(\triangle_{S} X^{\bar{h}},\triangle_{S}\hat{I}_{\alpha} Y^{\bar{h}})
\end{align*}
for $X$, $Y \in T M$, we obtain
\begin{align*}
 & \bar{\theta}(N^{\alpha}(X^{\bar{h}},Y^{\bar{h}})) \\
=&(-d \theta_{0}(X^{h_{0}},Y^{h_{0}})
+d \theta_{0}((I_{\alpha}X)^{h_{0}},(I_{\alpha}Y)^{h_{0}})
       -c \Omega_{\alpha} (X^{h},(I_{\alpha}Y)^{h})
       -c \Omega_{\alpha} ((I_{\alpha}X)^{h},Y^{h}) e_{0} \\
  & (-\Omega_{\alpha}(X^{h},Y^{h})
       +\Omega_{\alpha}((I_{\alpha}X)^{h},(I_{\alpha}Y)^{h}) 
       +\frac{1}{c}d \theta_{0}((I_{\alpha}X)^{h_{0}},Y^{h_{0}})
       +\frac{1}{c}d \theta_{0}(X^{h_{0}},(I_{\alpha}Y)^{h_{0}}))e_{\alpha} \\
 &  +A_{\alpha}(X^{\bar{h}},Y^{\bar{h}})e_{\beta} 
 +A_{\alpha}((I_{\alpha}X)^{\bar{h}},Y^{\bar{h}})e_{\gamma}.
\end{align*}
Next we show that the coefficients of $e_{0}$ and $e_{\alpha}$ can be described by the Ricci tensor of
$\nabla$ and that the other components vanish thanks to the integrability of
the almost complex structure on the twistor space of $M$  \cite{S}. 
Set 
\begin{eqnarray}\label{defB}
B:=\frac{1}{4(n+1)} (Ric^{\nabla})^{a}+\frac{1}{4n}(Ric^{\nabla})^{s}
    -\frac{1}{2n(n+2)} \Pi_{h} (Ric^{\nabla})^{s} , 
\end{eqnarray}
where $\Pi_{h} (Ric^{\nabla})^{s}$ is the $Q$-hermitian $(0,2)$-tensor defined by
\[ (\Pi_{h} (Ric^{\nabla})^{s} )(X,Y)=\frac{1}{4}
\left( (Ric^{\nabla})^{s} (X,Y)+\sum_{i=1}^{3}(Ric^{\nabla})^{s} (I_{i}X,I_{i}Y) \right)
\]
for $X$, $Y \in TM$. By \cite{AM1}, we have
\begin{eqnarray}\label{defOmega}
\Omega_{\alpha}^{U}(X,Y)=2 (B(X,I_{\alpha}Y)-B(Y,I_{\alpha}X)). 
\end{eqnarray}
Then it holds
\begin{eqnarray*}
\Omega^{U}_{\alpha}(I_{\alpha}X,Y)+\Omega^{U}_{\alpha}(X,I_{\alpha}Y)
=-\frac{1}{n+1} \left( (Ric^{\nabla})^{a}(X,Y)-(Ric^{\nabla})^{a}(I_{\alpha}X,I_{\alpha}Y)
                      \right).
\end{eqnarray*}
Since
$\varepsilon d \theta^{U}_{0}(X,Y)
= {\rm Tr} R^{\nabla}_{X,Y} 
=- Ric^{\nabla}(X,Y)+Ric^{\nabla}(Y,X)
=-2(Ric^{\nabla})^{a}(X,Y)$ and $\Omega_\alpha (X^h,Y^h) 
= (1/2) \varepsilon \Omega_\alpha^U(X,Y)$
for all tangent vector $X$, $Y$ on $M$, 
to prove (\ref{vhh}), it is sufficient to check $A_{\alpha}=0$. 
This is related to the integrability of the almost complex structure on 
the twistor space ${\cal Z}$ of the quaternionic manifold $(M,Q)$ as we explain now.  
Recall that ${\cal Z} = \{ A\in Q \mid A^2 = -\mathrm{id}\}$. 
We set 
\[ 
R^{\nabla (0,2) I}_{X,Y}
  :=\frac{1}{4} (R^{\nabla}_{X,Y}+I R^{\nabla}_{IX,Y}
                 +I R^{\nabla}_{X,IY} - R^{\nabla}_{IX,IY})
\]
for $X$, $Y \in TM$ and $I \in {\cal Z}$. 
Then 
\begin{eqnarray}\label{curvature_form_02part}
[R^{\nabla (0,2) I}_{X,Y}, I]=0
\end{eqnarray}
for any $I \in {\cal Z}$ if $n>1$. In the case of $\dim M=4$, 
(\ref{curvature_form_02part}) holds if and only if $Q$ is anti-self-dual.
See \cite{AG} for example. 
By (\ref{curvature_form}) and (\ref{curvature_form_02part}), we have
$[R^{\nabla (0,2) I_{\alpha}}_{X,Y}, I_{\alpha}] I_{\gamma} =0$ and thus 
\begin{align*}
 0 =& 2 {\rm Tr}[R^{\nabla (0,2) I_{\alpha}}_{X,Y}, I_{\alpha}] I_{\gamma} \\
    =&  {\rm Tr} (-I_{\beta}R^{\nabla}_{X,Y}
                        +I_{\beta}R^{\nabla}_{I_{\alpha}X,I_{\alpha}Y}
                        +I_{\gamma} R^{\nabla}_{I_{\alpha}X,Y}
                        +I_{\gamma} R^{\nabla}_{X,I_{\alpha}Y}) \\
    =& 2n (\Omega^{U}_{\beta}(X,Y) 
                -  \Omega^{U}_{\beta}(I_{\alpha}X,I_{\alpha}Y)      
                -  \Omega^{U}_{\gamma}(I_{\alpha}X,Y)
                -   \Omega^{U}_{\gamma}(X,I_{\alpha}Y) ) \\
    =& -4n \varepsilon A_{\alpha}(X^{\bar{h}},Y^{\bar{h}}) 
\end{align*}
for all $X$, $Y \in TM$. This proves that $A_{\alpha}=0$. 

Since $\nabla$ is torsion-free, we have (\ref{hhh}) by the similar calculation 
for the Nijenhuis tensor of the almost complex structure on the twistor space. \\ 
\end{proof}



From Lemma \ref{Nijenhuis} (and Proposition \ref{dependence}) we obtain the following result. 

\begin{theorem}\label{hypercomlex_cone}
Let $(M,Q)$ be a quaternionic manifold and $\nabla$ a quaternionic connection.  
Let $(\hat{I}^{\bar{\theta}, c}_{1},\hat{I}^{\bar{\theta}, c}_{2},
\hat{I}^{\bar{\theta}, c}_{3})$ 
be the almost hypercomplex structure on $\hat{M}$. We assume that 
$Q$ is anti-self-dual when $n=1$. If $c=-4(n+1)$, then 
the almost hypercomplex structure is integrable 
{\rm (}and independent of $\nabla${\rm )}. 
When $c \neq - 4(n+1)$, 
the almost hypercomplex structure is integrable  
if and only if $(Ric^{\nabla})^{a}$ is $Q$-hermitian, 
that is, it is hermitian with respect to 
$I$ for all $I \in {\cal Z}$,
where ${\cal Z}$ is the twistor space of $(M,Q)$. 
\end{theorem}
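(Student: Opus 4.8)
The plan is to read off the theorem directly from the Nijenhuis computations in Lemma \ref{Nijenhuis}, reducing the integrability of the almost hypercomplex structure to the vanishing of the expression (\ref{vhh}). The starting point is that an almost hypercomplex structure $(\hat I_1,\hat I_2,\hat I_3)$ on $\hat M$ is integrable if and only if each Nijenhuis tensor $N^\alpha$ vanishes identically (by the Newlander--Nirenberg theorem applied to each $\hat I_\alpha$; in fact vanishing of two of them forces the third, but we need not use this). Since $T\hat M$ is spanned pointwise by the vertical vectors $Z_0^c,Z_1,Z_2,Z_3$ and the horizontal lifts $X^{\bar h}$, and $N^\alpha$ is a tensor, it suffices to evaluate $N^\alpha$ on all pairs of such vectors. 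Lemma \ref{Nijenhuis} already shows, under the standing hypothesis that $Q$ is anti-self-dual when $n=1$, that $N^\alpha$ vanishes on every pair involving at least one vertical vector (equations (\ref{vv0})--(\ref{vh})), and that the horizontal projection $\hat\pi_\ast N^\alpha(X^{\bar h},Y^{\bar h})$ vanishes (equation (\ref{hhh})). Therefore the entire obstruction to integrability is the vertical part $\bar\theta(N^\alpha(X^{\bar h},Y^{\bar h}))$, whose explicit value in terms of $(Ric^\nabla)^a$ is given by (\ref{vhh}).

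Next I would analyse the right-hand side of (\ref{vhh}). It has the form
\[
\frac{4\varepsilon(n+1)+\varepsilon c}{2(n+1)}\Bigl((Ric^\nabla)^a(X,Y)-(Ric^\nabla)^a(I_\alpha X,I_\alpha Y)\Bigr)e_0
-\frac{4\varepsilon(n+1)+\varepsilon c}{2c(n+1)}\Bigl((Ric^\nabla)^a(X,I_\alpha Y)+(Ric^\nabla)^a(I_\alpha X,Y)\Bigr)e_\alpha,
\]
so it is the product of the scalar factor $4\varepsilon(n+1)+\varepsilon c$ with a tensor built from the $(2,0)+(0,2)$-part of $(Ric^\nabla)^a$ with respect to $I_\alpha$. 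When $c=-4(n+1)$ this scalar factor is identically zero, so (\ref{vhh}) vanishes and, combined with the previous paragraph, all $N^\alpha\equiv 0$; the independence of $\nabla$ is then quoted from Proposition \ref{dependence}. This handles the first assertion. When $c\neq -4(n+1)$ the scalar factor is a nonzero constant, so (\ref{vhh}) vanishes for all $\alpha$ and all $X,Y$ if and only if, for every $\alpha$,
\[
(Ric^\nabla)^a(X,Y)=(Ric^\nabla)^a(I_\alpha X,I_\alpha Y)\quad\text{and}\quad (Ric^\nabla)^a(I_\alpha X,Y)+(Ric^\nabla)^a(X,I_\alpha Y)=0
\]
for all $X,Y\in TM$. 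The first of these two conditions is precisely the statement that $(Ric^\nabla)^a$ is hermitian with respect to $I_\alpha$.

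The remaining point is to check that the first condition already implies the second, so that $Q$-hermiticity of $(Ric^\nabla)^a$ is both necessary and sufficient. This is elementary: $(Ric^\nabla)^a$ is skew-symmetric, so applying the hermiticity identity with $X$ replaced by $I_\alpha X$ gives $(Ric^\nabla)^a(I_\alpha X,Y)=(Ric^\nabla)^a(I_\alpha^2 X,I_\alpha Y)=-(Ric^\nabla)^a(X,I_\alpha Y)$, which is the second condition; conversely the second condition with $X\mapsto I_\alpha X$ gives the first. Hence for $c\neq -4(n+1)$ integrability is equivalent to $(Ric^\nabla)^a$ being hermitian with respect to every $I_\alpha$ in an admissible frame, i.e.\ with respect to every $I\in\mathcal Z$, since any such $I$ is of the form $\sum x_\alpha I_\alpha$ with $\sum x_\alpha^2=1$ and the hermiticity condition is clearly preserved under such combinations (one expands $(Ric^\nabla)^a(IX,IY)$ and uses the pairwise identities, the mixed terms cancelling by the skew-symmetry argument above). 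I do not expect any genuine obstacle here: the whole content has been front-loaded into Lemma \ref{Nijenhuis}, and what is left is just bookkeeping with the scalar factor $4\varepsilon(n+1)+\varepsilon c$ and a short symmetry argument; the only mild subtlety worth stating explicitly is the reduction from ``hermitian for one $I_\alpha$, both identities'' to ``hermitian for all $I\in\mathcal Z$''.
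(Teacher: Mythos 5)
Your proposal is correct and is precisely the argument the paper intends: the theorem is stated there as an immediate consequence of Lemma \ref{Nijenhuis} together with Proposition \ref{dependence}, and you have simply made explicit the reduction to the vertical component (\ref{vhh}), the vanishing of the scalar factor $4\varepsilon(n+1)+\varepsilon c$ when $c=-4(n+1)$, and the elementary equivalence between the two hermiticity identities and $Q$-hermiticity. No gaps.
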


We call $\hat{M}$ the {\it Swann bundle} of $M$, although 
the terminology ``Swann bundle'' is also used for the quotient space
$\hat{M} / \mathbb{Z}$ with $c= -4(n+1)$ in \cite{P}. 
From now on we will only consider the case that 
$(\hat{I}^{\bar{\theta}, c}_{1},
\hat{I}^{\bar{\theta}, c}_{2},\hat{I}^{\bar{\theta}, c}_{3})$
is a hypercomplex structure, i.e.\ integrable. 
We note that,  for each fixed quaternionic connection, 
$(\hat{I}^{\bar{\theta}, c}_{1},\hat{I}^{\bar{\theta}, c}_{2},\hat{I}^{\bar{\theta}, c}_{3})
\neq (\hat{I}^{\bar{\theta}, c^{\prime}}_{1},\hat{I}^{\bar{\theta}, 
c^{\prime}}_{2},\hat{I}^{\bar{\theta}, c^{\prime}}_{3})$
if $c \neq c^{\prime}$. 
Although it is obvious from the definition, we can also see it by considering the Obata connection. 
From Lemma \ref{Lie_der_hyp_cpx}, 
it follows that 
$\hat{\nabla}^{c}_{\widetilde{e}_{0}} \widetilde{e}_{0}
=(1/c)\hat{\nabla}^{1}_{\widetilde{e}_{0}}\widetilde{e}_{0}$, 
where $\hat{\nabla}^{c}$ is the Obata connection 
for the hypercomplex structure
$(\hat{I}^{\bar{\theta}, c}_{1},\hat{I}^{\bar{\theta}, c}_{2},\hat{I}^{\bar{\theta}, c}_{3})$. 

\section{An infinitesimal quaternionic vector field and its 
natural lift}
\setcounter{equation}{0}

A vector field $X$ on $(M,Q)$ is called {\it quaternionic}  
if its (local) flow $\varphi_{t}$ 
satisfies 
\[ 
\varphi_{-t}^{\ast} I := 
\varphi_{t \ast} \circ I \circ \varphi_{t \ast}^{-1} \in Q
\]
for all $I \in Q$ and for all $t$. 
For a connection $\nabla$ and $X \in \Gamma(TM)$, we define
\begin{eqnarray}\label{affine}
(L_{X}  \nabla)_{Y} Z
:= L_{X} (\nabla_{Y}Z)-\nabla_{L_{X}Y}Z-\nabla_{Y}(L_{X}Z), 
\end{eqnarray}
where $Y$, $Z \in \Gamma(TM)$. Note that 
$L_{X}  \nabla$ is a tensor. 
In this paper, we study $(M,Q)$ with a quaternionic vector field $X$ 
which is also affine, that is $L_{X}  \nabla=0$. 
So we start by studying the condition $L_{X}  \nabla=0$. 
We define the Hessian $H^{\nabla}$ with respect to $\nabla$ by 
\[ H^{\nabla}_{Y,Z}X=\nabla_{Y}\nabla_{Z}X-\nabla_{\nabla_{Y}Z}X \]
for $X$, $Y$, $Z \in \Gamma(TM)$. 
By similar arguments as in \cite{AM0}, we have the following.

\begin{lemma}\label{affine_eq}
Let $\nabla$ be a quaternionic connection of $(M,Q)$ 
and $X$ a quaternionic vector field. 
Then the following conditions are equivalent each other. \\
{\rm (1)} $L_{X} \nabla=0$, \\
{\rm (2)} $R^{\nabla}_{X,Y}Z = - H^{\nabla}_{Y,Z}X$ for all $Y$, $Z \in TM$, \\
{\rm (3)} $Ric^{\nabla}(X,Z)={\rm Tr}H^{\nabla}_{(\,\, \cdot \,\,) ,Z}X$
for all $Z \in TM$.
\end{lemma}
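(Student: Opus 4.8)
The plan is to prove the chain of equivalences $(1)\Leftrightarrow(2)\Rightarrow(3)$ and $(3)\Rightarrow(1)$, the last implication being the only one requiring the quaternionic hypothesis in an essential way. For $(1)\Leftrightarrow(2)$, the idea is to rewrite $L_X\nabla$ in terms of covariant derivatives. Since $\nabla$ is torsion-free we may replace all Lie brackets in \eqref{affine} by $\nabla_YZ-\nabla_ZY$, and a direct expansion of $(L_X\nabla)_YZ$ yields the standard identity $(L_X\nabla)_YZ = R^\nabla_{X,Y}Z + H^\nabla_{Y,Z}X$. (This is a purely algebraic computation: one writes out $L_X(\nabla_YZ)=\nabla_X\nabla_YZ-\nabla_{\nabla_YZ}X + \nabla_{\nabla_XY}Z - \ldots$ and collects terms, the Hessian $H^\nabla_{Y,Z}X=\nabla_Y\nabla_ZX-\nabla_{\nabla_YZ}X$ appearing from the two terms involving $\nabla X$ and the curvature appearing from the remaining three.) Once this identity is in place, $(1)\Leftrightarrow(2)$ is immediate, and $(2)\Rightarrow(3)$ follows by tracing over $Y$: $\mathrm{Tr}(Y\mapsto R^\nabla_{X,Y}Z) = -Ric^\nabla(X,Z)$ by the definition of the Ricci tensor $Ric^\nabla(U,W)=\mathrm{Tr}(V\mapsto R^\nabla_{V,U}W)$, while $\mathrm{Tr}(Y\mapsto H^\nabla_{Y,Z}X)=\mathrm{Tr}H^\nabla_{(\,\cdot\,),Z}X$, so tracing $(2)$ gives exactly $(3)$.

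The substantive implication is $(3)\Rightarrow(1)$. Here I would follow the argument of \cite{AM0}: condition $(3)$ says that the symmetric-in-nothing trace of $L_X\nabla$ vanishes, i.e. the $TM$-valued $(0,2)$-tensor $(L_X\nabla)$ is trace-free in the appropriate slot. The point is that $X$ being quaternionic strongly constrains the form of $L_X\nabla$. Differentiating the condition $\varphi_{t*}\circ I\circ\varphi_{t*}^{-1}\in Q$ at $t=0$ gives that $L_X$ acting on $Q$ preserves $Q$; combined with $\nabla$ preserving $Q$, one deduces (as in \cite{AM0} or via Lemma \ref{q_conn_eq}) that $L_X\nabla$ must be of the special form $S^\eta$ for some one-form $\eta$ — that is, it lies in the same $\mathrm{Sp}(1)\mathrm{Sp}(n)$-representation that governs the difference of two quaternionic connections. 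Then I would compute the relevant trace of $S^\eta$: using the formula for $S^\xi$ in Lemma \ref{q_conn_eq} and $\dim M=4n$, one finds $\mathrm{Tr}(Y\mapsto S^\eta_YZ\text{-type contraction}) = 2(n+1)\,(\text{something built from }\eta)$, so that the vanishing of this trace forces $\eta=0$, hence $L_X\nabla=0$.

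The main obstacle is precisely the representation-theoretic step in $(3)\Rightarrow(1)$: showing that a quaternionic vector field forces $L_X\nabla$ to lie in the subspace spanned by the tensors $S^\eta$. This requires identifying $L_X\nabla$ as an element of $\mathrm{Hom}(S^2T^*M, TM)$ that is equivariant under the quaternionic structure group and commutes appropriately with $Q$, then invoking the decomposition of this space into irreducibles — the same decomposition underlying Lemma \ref{q_conn_eq}. The trace computation for $S^\eta$ (yielding the factor $2(n+1)$, nonzero since $n\geq 1$) is then routine but must be done carefully to see the coefficient does not vanish. I would present the equivalences $(1)\Leftrightarrow(2)$ and $(2)\Rightarrow(3)$ in full, and for $(3)\Rightarrow(1)$ indicate the structure of the argument while referring to \cite{AM0} for the details of the representation-theoretic reduction, as the authors themselves signal by writing ``by similar arguments as in \cite{AM0}.''
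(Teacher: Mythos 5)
Your proposal is correct and follows essentially the same route as the paper: the identity $(L_X\nabla)_YZ = R^\nabla_{X,Y}Z + H^\nabla_{Y,Z}X$ gives $(1)\Leftrightarrow(2)\Rightarrow(3)$, and for $(3)\Rightarrow(1)$ the paper likewise writes $L_X\nabla = S^{\xi_X}$ by differentiating $\varphi_t^*\nabla-\nabla=S^{\xi_t}$ (which exists by Lemma \ref{q_conn_eq} because the flow of a quaternionic vector field maps quaternionic connections to quaternionic connections) and concludes from the nonvanishing of the trace coefficient. The only quibble is numerical: the paper computes $\mathrm{Tr}\,S^{\xi_X}_Z = 4(n+1)\,\xi_X(Z)$ rather than your $2(n+1)$, but either way the coefficient is nonzero, which is all the argument requires.
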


\begin{proof}
Since $X$ is a quaternionic vector field, 
$\varphi_{-t}^{\ast} \nabla$ is a quaternionic connection with respect to $Q$, where 
$\varphi_{-t}^{\ast} \nabla$ is the connection defined by
\[ (\varphi_{-t}^{\ast} \nabla)_{Y} Z
= \varphi_{t \ast} (\nabla_{\varphi_{t \ast}^{-1}Y} \varphi_{t \ast}^{-1}Z) \]
for $Y$, $Z \in \Gamma(TM)$. 
Therefore there exists a one form $\xi_{t}$ such that 
$\varphi_{t}^{\ast} \nabla - \nabla =S^{\xi_{t}}$ by Lemma \ref{q_conn_eq}. 
Then we have
\begin{align}\label{LX_eq}
      L_{X} \nabla
=\left. \frac{d}{dt} \varphi_{t}^{\ast} \nabla  \right|_{t=0}  
=\left. \frac{d}{dt}  S^{\xi_{t}} 
     \right|_{t=0}  
=S^{\xi_{X}},
\end{align}
where $\xi_{X}=(d/dt) \xi_{t} |_{t=0}$. 
On the other hand, 
by a straightforward calculation, we have
$(L_{X} \nabla)_{Y}Z=R^{\nabla}_{X,Y}Z+H^{\nabla}_{Y,Z}X$
for all $Y$, $Z \in TM$. 
Therefore, we have
\[ (L_{X} \nabla)_{Y}Z=R^{\nabla}_{X,Y}Z+H^{\nabla}_{Y,Z}X=S^{\xi_{X}}_{Z}Y. \]
It follows that $(1) \Rightarrow (2)$. 
It is also easy to see that $(2) \Rightarrow (3)$
by taking a trace. 
Since ${\rm Tr} S^{\xi_{X}}_{Z} =4(n+1)\xi_{X}(Z)$, we have 
\[ -Ric^{\nabla}(X,Z)+{\rm Tr}H^{\nabla}_{(\,\, \cdot \,\,) ,Z}X 
      =  {\rm Tr} S^{\xi_{X}}_{Z} =4(n+1)\xi_{X}(Z). \] 
If (3) holds, then we have $(1)$. 
\end{proof}

We consider the normalizer and 
\[ N(Q):=\{ A \in {\rm End}(TM) \mid [A,I] \in Q \,\,\, \mbox{for all}\,\,\,  I \in Q \} \]
and the centralizer
\[ Z(Q):=\{ A \in {\rm End}(TM) \mid [A,I]=0 \,\,\, \mbox{for all}\,\,\,  I \in Q \}. \]
Then we see $N(Q)=Q+Z(Q)=Q+\mathbb{R} \cdot \mathrm{id}+Z_{0}(Q)$, where 
$Z_{0}(Q)$ is the subspace of $Z(Q)$ of trace-free tensors \cite{AM1}. 
Let $\nabla$ be a quaternionic connection and 
$X$ a quaternionic vector field. 
Since $L_{X} I_{\alpha}=\nabla_{X} I_{\alpha}+[I_{\alpha},(\nabla X)]$, 
$\nabla X$ is an element of 
$N(Q)$. We write $\nabla X=T+T_{0}$, where 
$T \in \Gamma(Q+\mathbb{R} \cdot \mathrm{id})$ and 
$T_{0} \in \Gamma(Z_{0}(Q))$. Note that, by \cite{AM1}, we have explicitly 
\begin{align*}
       \nabla X 
=& -\frac{1}{4n} \sum_{\alpha=1}^{3} ({\rm Tr} (\nabla X)I_{\alpha})I_{\alpha} 
\,\, (\in \Gamma(Q)) \\
  & +\frac{1}{4n} ({\rm Tr} \nabla X )\mathrm{id} 
     \,\, (\in C^{\infty}(M) \mathrm{id} )\\
  & +\frac{1}{4} 
        ((\nabla X) - \sum_{\alpha=1}^{3} I_{\alpha} (\nabla X) I_{\alpha})
                       - \frac{1}{4n} ({\rm Tr} \nabla X )\mathrm{id} 
                       \,\, (\in \Gamma(Z_{0}(Q))). 
\end{align*}
So it holds
\begin{eqnarray}\label{q-compo}
T = \frac{1}{4n} \sum_{\alpha=0}^{3} 
                  \varepsilon_{\alpha} ({\rm Tr} (\nabla X)I_{\alpha}) I_{\alpha}, 
\end{eqnarray}
where $I_{0}=\mathrm{id}$ and $\varepsilon_{0}=1$, 
$\varepsilon_{1}=\varepsilon_{2}=\varepsilon_{3}=-1$. 

\begin{proposition}\label{chara_affine}
Let $\nabla$ be a quaternionic connection and 
$X$ a quaternionic vector field. 
Then $L_{X} \nabla=0$ if and only if 
$2(Ric^{\nabla})^{a}(X, \, \cdot \,)=d ({\rm Tr} (\nabla X))$. 
\end{proposition}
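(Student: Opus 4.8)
The plan is to combine Lemma~\ref{affine_eq} with the explicit decomposition of $\nabla X \in N(Q)$ given above. By Lemma~\ref{affine_eq}, the condition $L_X\nabla = 0$ is equivalent to condition (3), namely $Ric^\nabla(X,Z) = {\rm Tr}\, H^\nabla_{(\,\cdot\,),Z}X$ for all $Z\in TM$. So the first step is to identify both sides of this identity with tensors built out of $\nabla X$ and the Ricci curvature. The right-hand side is a trace of the Hessian; I would rewrite ${\rm Tr}\, H^\nabla_{(\,\cdot\,),Z}X$ by unwinding the definition $H^\nabla_{Y,Z}X = \nabla_Y\nabla_Z X - \nabla_{\nabla_Y Z}X$ and recognizing that, up to curvature terms, this trace is essentially the covariant divergence of the endomorphism $\nabla X$ evaluated in the $Z$-direction, i.e.\ something like $({\rm div}^\nabla(\nabla X))(Z)$ plus a contraction of $R^\nabla$ with $X$.

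The key algebraic input is the formula (\ref{q-compo}): the $Q + \mathbb{R}\cdot\mathrm{id}$-component of $\nabla X$ is $T = \frac{1}{4n}\sum_{\alpha=0}^3 \varepsilon_\alpha ({\rm Tr}(\nabla X)I_\alpha)I_\alpha$, and in particular ${\rm Tr}\,\nabla X = {\rm Tr}((\nabla X)I_0)$ is the trace-part coefficient (times $4n$). The strategy is then to take the trace identity (2) $\Leftrightarrow$ (3) of Lemma~\ref{affine_eq} and contract it cleverly: condition (2) says $R^\nabla_{X,Y}Z = -H^\nabla_{Y,Z}X$, and taking the trace over $(Y,Z)$ in the appropriate slots, together with the first Bianchi identity and the symmetry properties of $R^\nabla$ (recall $\nabla$ is torsion-free, so $Ric^\nabla$ has a well-defined skew part governed by ${\rm Tr}\,R^\nabla_{\cdot,\cdot}$ via $\varepsilon\, d\theta^U_0(X,Y) = -2(Ric^\nabla)^a(X,Y)$ as computed in the proof of Lemma~\ref{Nijenhuis}), should produce exactly the relation between $(Ric^\nabla)^a(X,\,\cdot\,)$ and $d({\rm Tr}(\nabla X))$. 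Concretely, I expect that tracing condition (2) gives on one side $Ric^\nabla(X,Z)$ and a term involving ${\rm Tr}\,R^\nabla_{\cdot,Z}X$ which equals $-2(Ric^\nabla)^a(Z,\,\cdot\,)$-type data, and on the other side a term $d({\rm Tr}(\nabla X))(Z)$ arising because ${\rm Tr}(H^\nabla_{Y,Z}X) = Y({\rm Tr}\,\nabla_Z X) - (\ldots)$ assembles into the exterior derivative of the function ${\rm Tr}(\nabla X)$.

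The cleanest route is probably: start from ${\rm Tr}\, S^{\xi_X}_Z = 4(n+1)\xi_X(Z)$ (already used in the proof of Lemma~\ref{affine_eq}), which gives $-Ric^\nabla(X,Z) + {\rm Tr}\,H^\nabla_{(\,\cdot\,),Z}X = 4(n+1)\xi_X(Z)$ unconditionally, so that $L_X\nabla = 0 \iff \xi_X = 0$. Then it remains to show that $\xi_X = 0$ is equivalent to $2(Ric^\nabla)^a(X,\,\cdot\,) = d({\rm Tr}(\nabla X))$. For this I would use (\ref{LX_eq}), $L_X\nabla = S^{\xi_X}$, and the trace computation: taking the skew-symmetrized trace of $(L_X\nabla)_Y Z = R^\nabla_{X,Y}Z + H^\nabla_{Y,Z}X$ over two indices, the curvature term contributes $(Ric^\nabla)^a$-data and $d\,{\rm Tr}\,R^\nabla$, while the Hessian term is a total derivative of ${\rm Tr}(\nabla X)$; comparing with the corresponding trace of $S^{\xi_X}$ (which involves only $\xi_X$ and $\xi_X\circ I_\alpha$) isolates the scalar/$2$-form content and yields the claimed equivalence. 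The main obstacle is bookkeeping: correctly choosing which pair of indices to contract so that the $Z_0(Q)$-component of $\nabla X$ drops out and only the trace-part ${\rm Tr}(\nabla X)$ and the skew-Ricci survive, and handling the Bianchi identity terms so the curvature contraction collapses to $(Ric^\nabla)^a(X,\,\cdot\,)$ with the correct factor of $2$.
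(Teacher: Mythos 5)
Your proposal is correct and follows essentially the same route as the paper: starting from Lemma \ref{affine_eq} (equivalently $L_X\nabla=S^{\xi_X}$ with ${\rm Tr}\,S^{\xi_X}_Z=4(n+1)\xi_X(Z)$), one rewrites ${\rm Tr}\,H^{\nabla}_{(\,\cdot\,),Z}X$ via the Ricci/Bianchi identities as $Z({\rm Tr}(\nabla X))$ plus curvature traces, uses ${\rm Tr}\,R^{\nabla}_{\cdot,\cdot}=-2(Ric^{\nabla})^{a}$, and invokes (\ref{q-compo}) to see that only the trace part of $\nabla X$ contributes. The bookkeeping you flag as the main obstacle is exactly the content of the paper's short computation and works out with no hidden difficulty.
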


\begin{proof}
By the Bianchi identity, it holds 
\begin{align*}
R^{\nabla}_{X,Y} Z
&= -R^{\nabla}_{Z,X}Y-R^{\nabla}_{Y,Z}X \\
&= -R^{\nabla}_{Z,X}Y-H^{\nabla}_{Y,Z} X+H^{\nabla}_{Z,Y} X \\
&= -R^{\nabla}_{Z,X}Y-H^{\nabla}_{Y,Z} X+(\nabla_{Z} T)(Y)+(\nabla_{Z} T_{0})(Y)
\end{align*}
for all $Y$, $Z \in TM$. Then we have
$-Ric^{\nabla}(X,Z)=-{\rm Tr}R^{\nabla}_{Z,X} 
- {\rm Tr}H^{\nabla}_{(\, \cdot \,),Z} X
+ {\rm Tr} (\nabla_{Z} T)$, 
since $T_{0}$ and $\nabla_{Z} T_{0}$ are trace-free. 
Therefore, by Lemma \ref{affine_eq}, we see that
$L_{X} \nabla=0$ if and only if 
$2(Ric^{\nabla})^{a}(Z,X) + {\rm Tr} (\nabla_{Z} T) =0$. 
Finally, because $T \in \Gamma(Q+\mathbb{R} \cdot \mathrm{id})$, we obtain 
${\rm Tr} (\nabla_{Z} T)=Z {\rm Tr} (\nabla X)$ 
by (\ref{q-compo}). This implies the conclusion. 
\end{proof}


Recall that every Killing vector field is affine 
with respect of the Levi-Civita connection. 
This means that every quaternionic Killing vector field $X$ on a 
quaternionic K{\"a}hler manifold $(M,g,Q)$ is an example of an affine  
quaternionic vector field.
This can be seen also by Proposition \ref{chara_affine}.

\begin{corollary}\label{ex_affine_con}
Let $X$ be an quaternionic vector filed on a quaternionic manifold 
$(M,Q)$. If there exists a volume element $\nu$ on $M$ such that $L_{X} \nu=0$, 
then there exists a quaternionic connection $\nabla$ such that 
$L_{X} \nabla=0$. 
\end{corollary}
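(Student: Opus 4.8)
The plan is to fix the affine property by first correcting an arbitrary quaternionic connection so that it becomes compatible with the given volume element $\nu$, and then to verify the criterion of Proposition \ref{chara_affine} directly. So I would start from any quaternionic connection $\nabla^{1}$ on $(M,Q)$. Since $\nu$ is a nowhere-vanishing section of the line bundle $\Lambda^{4n}T^{*}M$, the connection induced on this line bundle satisfies $\nabla^{1}\nu=\eta\otimes\nu$ for a uniquely determined $1$-form $\eta\in\Gamma(T^{*}M)$.

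Next I would set $\xi:=\frac{1}{4(n+1)}\eta$ and $\nabla:=\nabla^{1}+S^{\xi}$, which by Lemma \ref{q_conn_eq} is again a quaternionic connection. Using that the endomorphism $S^{\xi}_{Z}$ of $TM$ has trace ${\rm Tr}\,S^{\xi}_{Z}=4(n+1)\xi(Z)$, and that passing from $\nabla^{1}$ to $\nabla$ changes the induced connection on $\Lambda^{4n}T^{*}M$ by minus this trace, one gets $\nabla\nu=(\eta-4(n+1)\xi)\otimes\nu=0$. Thus $\nabla$ preserves $\nu$. Two consequences of this will feed into Proposition \ref{chara_affine}. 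First, since $\nabla$ is torsion-free and $\nabla\nu=0$, the connection induced on $\Lambda^{4n}TM$ is flat, so ${\rm Tr}\,R^{\nabla}_{Y,Z}=0$ for all $Y,Z$; combined with the identity ${\rm Tr}\,R^{\nabla}_{Y,Z}=-2(Ric^{\nabla})^{a}(Y,Z)$ established in Section 3 via the first Bianchi identity, this yields $(Ric^{\nabla})^{a}=0$. Second, since for a torsion-free connection $L_{X}$ acts on tensors as $\nabla_{X}$ minus the derivation induced by the endomorphism $\nabla X$, on the top form $\nu$ this reads $L_{X}\nu=\nabla_{X}\nu+{\rm Tr}(\nabla X)\,\nu={\rm Tr}(\nabla X)\,\nu$, using $\nabla\nu=0$. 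Hence the hypothesis $L_{X}\nu=0$ forces ${\rm Tr}(\nabla X)\equiv 0$, so $d({\rm Tr}(\nabla X))=0$.

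Putting these together gives $2(Ric^{\nabla})^{a}(X,\,\cdot\,)=0=d({\rm Tr}(\nabla X))$, which is precisely the condition characterizing affine quaternionic vector fields in Proposition \ref{chara_affine}; therefore $L_{X}\nabla=0$, as required. I expect no serious obstacle here: the argument is essentially a combination of Lemma \ref{q_conn_eq}, Proposition \ref{chara_affine}, and the curvature identities already recorded. The only point requiring genuine care is the bookkeeping for the connection induced on the volume line bundle — getting the sign and the factor $4(n+1)$ right in $\nabla\nu=(\eta-4(n+1)\xi)\otimes\nu$, and in the formula $L_{X}\nu=(\nabla_{X}\nu)+{\rm Tr}(\nabla X)\,\nu$ — but these are routine once the conventions of Lemma \ref{q_conn_eq} and of Section 4 are kept fixed.
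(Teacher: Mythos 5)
Your proposal is correct and follows essentially the same route as the paper: produce a quaternionic connection with $\nabla\nu=0$, deduce that $Ric^{\nabla}$ is symmetric and that ${\rm Tr}(\nabla X)=0$ from $L_{X}\nu=0$, and conclude via Proposition \ref{chara_affine}. The only difference is that you construct the volume-preserving quaternionic connection explicitly via the correction $S^{\xi}$ with $\xi=\eta/(4(n+1))$, whereas the paper simply cites \cite[Theorem 2.4]{AM1} for its existence; your bookkeeping of the traces and signs is accurate.
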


\begin{proof}
We can find a quaternionic connection $\nabla$ 
such that $\nabla \nu=0$ by \cite[Theorem 2.4]{AM1}. 
Then $Ric^{\nabla}$ is symmetric. 
Because $L_{X}=\nabla_{X}-(\nabla X)$ and $L_{X} \nu=0$, 
we have ${\rm Tr}(\nabla X)=0$. 
Now the conclusion follows from Proposition \ref{chara_affine}. 
\end{proof}
If $X$ is a quaternionic vector field with the flow $\{ \varphi_{t} \}$,  
then $X$ can be lifted to $\hat{X}$ on $\hat{M}$ as follows. 
We define $\hat{\varphi}_{t}:\hat{M} \to \hat{M}$ by 
\[ \hat{\varphi}_{t}((x,(\rho,s)))
=(\varphi_{t}(x), (\varphi_{-t}^{\ast} \rho, (\varphi_{-t}^{\ast} I_{1},
\varphi_{-t}^{\ast} I_{2},
\varphi_{-t}^{\ast} I_{3})
)) \]
for $(x,(\rho,s)) \in \hat{M}$, where $s=(I_{1},I_{2},I_{3})$ and define 
\[ \hat{X}_{(x,(\rho,s))} 
= \left. \frac{d}{dt} \hat{\varphi}_{t}((x,(\rho,s))) \right|_{t=0}. \] 
%
%
%
%
%
%
The vector field $\hat{X}$ on $\hat{M}$ is called the {\it natural lift} of $X$. 
Since $\hat{X}$ is invariant by the principal 
${\mathbb R}^{>0} \times {\rm SO}(3)$-action, we have the following.

\begin{lemma}\label{invariant}
Let $\hat{X}$ be the natural lift of a quaternionic vector field $X$. 
We have $[\hat{X},\widetilde{B}]=0$ for 
$B \in  {\mathbb R} \oplus \mathfrak{so}(3)$. 
\end{lemma}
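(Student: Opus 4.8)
\textbf{Proof proposal for Lemma \ref{invariant}.}

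The plan is to show that the natural lift $\hat X$ is invariant under the full principal $\mathbb{R}^{>0}\times\mathrm{SO}(3)$-action, since the bracket $[\hat X,\widetilde B]$ is precisely the infinitesimal expression of this invariance: if $R_g$ denotes the principal right action of $g\in\mathbb{R}^{>0}\times\mathrm{SO}(3)$ on $\hat M$, then $(R_g)_\ast \hat X=\hat X$ for all $g$ is equivalent to $[\widetilde B,\hat X]=0$ for all $B$ in the Lie algebra, because $\widetilde B$ generates the one-parameter subgroup $\{R_{\exp(tB)}\}$ and $[\widetilde B,\hat X]=\left.\frac{d}{dt}\right|_{t=0}(R_{\exp(tB)})_\ast^{-1}\hat X$. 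So it suffices to prove that the flow $\hat\varphi_t$ of $\hat X$ commutes with $R_g$.

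First I would verify this at the level of flows. Recall $\hat\varphi_t((x,(\rho,s)))=(\varphi_t(x),(\varphi_{-t}^\ast\rho,(\varphi_{-t}^\ast I_1,\varphi_{-t}^\ast I_2,\varphi_{-t}^\ast I_3)))$, where $\varphi_t$ is the flow of $X$ on $M$. The key observation is that pullback by a diffeomorphism is $\mathbb{R}$-linear on $\Lambda^{4n}(T^\ast M)$ and commutes with the $\mathrm{SO}(3)$-action on admissible frames: for $a\in\mathbb{R}^{>0}$ one has $\varphi_{-t}^\ast(a\rho)=a\,\varphi_{-t}^\ast\rho$ (so the $\mathbb{R}^{>0}$-scaling on $S_0$ commutes with $\varphi_{-t}^\ast$), and for $g\in\mathrm{SO}(3)$ acting on a frame $s=(I_1,I_2,I_3)$ to produce the rotated frame $sg$ (a fixed linear combination of the $I_\alpha$ with constant coefficients), we have $\varphi_{-t}^\ast(sg)=(\varphi_{-t}^\ast s)g$ because $\varphi_{t\ast}\circ(\cdot)\circ\varphi_{t\ast}^{-1}$ is linear in the endomorphism argument. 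Hence $\hat\varphi_t\circ R_g=R_g\circ\hat\varphi_t$ for every $g$, i.e.\ $\hat\varphi_t$ is equivariant.

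Differentiating this equivariance in $g$ gives $(\hat\varphi_t)_\ast\widetilde B=\widetilde B$ for all $B\in\mathbb{R}\oplus\mathfrak{so}(3)$ (fundamental vector fields are preserved by equivariant maps), and differentiating once more in $t$ yields $[\hat X,\widetilde B]=\left.\frac{d}{dt}\right|_{t=0}(\hat\varphi_{-t})_\ast\widetilde B=0$. I do not anticipate a serious obstacle; the only point requiring care is the bookkeeping of which action is a right action versus a left action (the parameter $\varepsilon=\pm1$ in the definition of $\tau$, $\tau_0$), but this does not affect the commutation argument since the statement holds for either choice. One should also note that $\hat X$ is genuinely well-defined on $\hat M=\triangle^\ast(S_0\times S)$ and tangent to it, which is immediate from $\hat\varphi_t$ mapping $\hat M$ to itself, a consequence of $\pi_{S_0}(\varphi_{-t}^\ast\rho)=\varphi_t(x)=\pi_S(\varphi_{-t}^\ast s)$.
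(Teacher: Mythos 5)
Your proposal is correct and matches the paper's approach: the paper states the lemma as an immediate consequence of the sentence ``Since $\hat{X}$ is invariant by the principal ${\mathbb R}^{>0}\times\mathrm{SO}(3)$-action,'' giving no further proof, and your argument is exactly a careful verification of that invariance via the equivariance $\hat\varphi_t\circ R_g=R_g\circ\hat\varphi_t$, which follows from the linearity of $I\mapsto\varphi_{t\ast}\circ I\circ\varphi_{t\ast}^{-1}$ and of pullback on $\Lambda^{4n}(T^{\ast}M)$.
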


Existence of $\nu \in \Gamma(S_{0})$ such that $L_{X} \nu=0$ 
(see Corollary \ref{ex_affine_con}) is related to 
the following condition for $\hat{X}$.

\begin{lemma}\label{triviality}
Let $X$ be a quaternionic vector field $X$ on $(M,Q)$. 
The following conditions are equivalent {\rm :}  \\
{\rm (1)} there exists $\nu \in \Gamma(S_{0})$ such that $L_{X} \nu=0$, \\
{\rm (2)} there exists a trivialization $S_{0} \cong M \times {\mathbb R}^{>0}$ 
such that $\hat{X}_{p} \in T_{s}S \subset T_{p}\hat{M} \cong T_{s}S \oplus {\mathbb R}$
for all $p=(x,(\rho,s)) \in \hat{M}$. 
\end{lemma}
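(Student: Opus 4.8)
The plan is to unwind both sides of the equivalence in terms of the $\mathbb{R}^{>0}$-part of the natural lift $\hat X$ and the connection $\theta_0$ on $S_0$. First I would recall that the decomposition $T\hat M = \bar{\mathcal V} \oplus \bar{\mathcal H}$ together with the splitting of $\bar{\mathcal V}$ as the sum of the $\mathfrak{so}(3)$-directions $\langle Z_1,Z_2,Z_3\rangle$ and the $\mathbb{R}^{>0}$-direction $\langle \widetilde e_0\rangle$ lets me write, at each $p=(x,(\rho,s))$,
\[
\hat X_p = \hat X^{\mathcal H}_p + \sum_{\alpha=1}^3 a_\alpha(p)\, Z_\alpha + f(p)\, \widetilde e_0,
\]
where $f = \theta_0(\triangle_{S_0}\hat X)$ is a smooth function on $\hat M$. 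The point of (2) is precisely the vanishing of this $\widetilde e_0$-component once $S_0$ has been trivialised appropriately, i.e.\ after replacing $\theta_0$ by the flat connection associated to a global section $\nu$ of $S_0$; so the whole statement is about finding $\nu$ killing the $\widetilde e_0$-component of $\hat X$.

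The key computation is to identify $f$ with the Lie-derivative $L_X\nu$ read through a section $\nu\in\Gamma(S_0)$. Concretely, for a local section $\nu: U\to S_0$ the pullback connection form is $\nu^*\theta_0 = \nu^{-1}\,d\nu$ in the standard way for an $\mathbb{R}^{>0}$-bundle (i.e.\ $\theta_0$ measures the logarithmic derivative of the fibre coordinate), and one checks from the definition of $\hat\varphi_t$ — whose $S_0$-component is $\rho\mapsto\varphi_{-t}^*\rho$ — that
\[
\theta_0\bigl(\triangle_{S_0}\hat X\bigr)\big|_{\nu(x)} \;=\; -\,\bigl(\nu^{-1} L_X\nu\bigr)(x),
\]
where $L_X\nu$ is the section of $TS_0$ along $\nu$ obtained by differentiating $\varphi_{-t}^*(\nu\circ\varphi_t)$ at $t=0$, viewed as a function $U\to\mathbb{R}$ via the $\mathbb{R}^{>0}$-action (this is the usual identification of $L_X\nu$ with a "density", since $S_0$ is a bundle of $\tfrac{4n}{\,}$-densities mod sign). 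Thus $L_X\nu = 0$ for a \emph{global} $\nu$ is equivalent to $f\equiv 0$ along the image of $\nu$, and since $f$ is $\mathbb{R}^{>0}$-invariant (because $\widetilde e_0$ is central, hence $[\hat X,\widetilde e_0]=0$ by Lemma \ref{invariant}, which forces $\widetilde e_0 f = 0$ after noting $f = \theta_0(\triangle_{S_0}\hat X)$ and $\theta_0(\widetilde e_0)=1$), vanishing along one section is the same as vanishing on all of $\hat M$.

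From there the two implications are short. For (1)$\Rightarrow$(2): given $\nu$ with $L_X\nu=0$, use the trivialisation $S_0\cong M\times\mathbb{R}^{>0}$ induced by $\nu$; in this trivialisation $\theta_0$ becomes $da/a$ (the Maurer–Cartan form of the second factor) and the computation above gives that the $\widetilde e_0$-component of $\hat X$ equals $-(\nu^{-1}L_X\nu)=0$ at points of the zero section, hence everywhere by invariance; therefore $\hat X_p$ lies in $T_sS\subset T_p\hat M$ for all $p$. For (2)$\Rightarrow$(1): a trivialisation $S_0\cong M\times\mathbb{R}^{>0}$ is the same datum as a global section $\nu\in\Gamma(S_0)$ (take $\nu(x)=(x,1)$); the hypothesis that $\hat X$ has no $\mathbb{R}\,(\cong T_1\mathbb{R}^{>0})$-component in this trivialisation says exactly that the logarithmic derivative of the fibre coordinate along $\hat\varphi_t$ vanishes, which by the formula above is $\nu^{-1}L_X\nu = 0$, i.e.\ $L_X\nu=0$. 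The main obstacle is purely bookkeeping: pinning down the precise sign and normalisation in the identification $\theta_0(\triangle_{S_0}\hat X) = -\nu^{-1}L_X\nu$, because it requires being careful about the convention $\varepsilon=\pm1$ distinguishing the left/right principal action on $S_0$ and about the fact that $S_0$ is a density bundle quotiented by $\{\pm1\}$; once that is fixed, everything else is immediate from Lemma \ref{invariant}.
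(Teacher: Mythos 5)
Your proposal is correct and follows essentially the same route as the paper's (very terse) proof: trivialize $S_0$ by $\nu$, identify the $\mathbb{R}^{>0}$-component of $\hat{X}$ in the resulting product splitting $\hat{M}=S\times\mathbb{R}^{>0}$ with $-\nu^{-1}L_X\nu$ via the definition of $\hat{\varphi}_t$, and use Lemma \ref{invariant} to propagate the vanishing from the section to all of $\hat{M}$; the converse is the same tautological $\nu(x)=\Phi^{-1}(x,1)$. Your only delicate point — distinguishing the flat connection $da/a$ of the trivialization from the $\nabla$-induced $\theta_0$ — is handled correctly, since condition (2) indeed refers to the product splitting and not to $\bar{\theta}$.
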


\begin{proof}
At first, assume that (1) holds. Then $\nu$ gives  
a trivialization $S_{0} \cong M \times {\mathbb R}^{>0}$
and $\hat{M}=S \times {\mathbb R}^{>0}$. 
We denote the component of $\hat{M}$ tangent to the second factor 
by $\hat{X}^{{\mathbb R}}$. 
For any point $(x,(\rho,s)) \in \hat{M}$, 
we see that 
$\hat{X}^{{\mathbb R}}_{(x,(\rho,s))}=0$
$\iff$ 
$\hat{X}^{{\mathbb R}}_{(x,(\nu(x),s))}=0$
$\iff$
$(L_{X} \nu)_{x}=0$ by 
Lemma \ref{invariant}. 
Conversely, we can obtain the  desired section $\nu \in \Gamma(S_{0})$ by
$\nu(x)=\Phi^{-1}(x,1)$ for each $x \in M$, 
where $\Phi:S_{0} \to M \times {\mathbb R}^{>0}$ is a 
trivialization satisfying (2). 
\end{proof}

If there exists $\nu \in \Gamma(S_{0})$ such that $L_{X} \nu=0$, 
we may assume that 
$\hat{X}$ is a tangent vector field on $S$ by Lemma \ref{triviality}. 
From now on we will assume that the quaternionic vector field $X$ 
generates a free $\mathrm{U}(1)$-action. 
Since $\mathrm{U}(1)$ is compact, 
there exists a volume form $\nu$ invariant under the group action.    
This also implies that there exists 
a quaternionic connection $\nabla$ such that 
$L_{X} \nabla=0$ by Corollary \ref{ex_affine_con}.


\section{The hypercomplex moment map}
\setcounter{equation}{0}

In this section, we consider a hypercomplex moment map on the Swann bundle. 
In \cite{J}, a hypercomplex moment map is defined as follows.

\begin{definition}[\cite{J}]
{\rm 
Let $M$ be a hypercomplex manifold 
with hypercomplex structure $I_{1}$, $I_{2}$, $I_{3}$ and 
$F$ a compact Lie group acting smoothly and freely 
on $M$ preserving $I_{i}$ {\rm(}$i=1,2,3${\rm)}. 
$F$ acts on $\mathfrak{F}=\mathrm{Lie}\, F$ by 
the adjoint action. 
A vector field on $M$ induced by $f \in \mathfrak{F}$ is denoted
by $X_{f}$. 
If a triple $\mu=(\mu_{1},\mu_{2},\mu_{3})$ of 
$F$-equivariant maps $\mu_{i}:M \to \mathfrak{F}^{\ast}$ 
{\rm (}$i=1,2,3${\rm )} satisfies 
\begin{eqnarray}\label{moment_cr}
d \mu_{1} \circ I_{1}=d \mu_{2} \circ I_{2}=d \mu_{3} \circ I_{3}
\end{eqnarray}
and 
\begin{eqnarray}\label{moment_trans}
 (d \mu_{1} \circ I_{1})(X_{f}) \,\, 
\mbox{does not vanish on} \,\, M \,\,
\mbox{for any non-zero} \,\,  f \in \mathfrak{F},
\end{eqnarray}
then $\mu$ is called the {\it hypercomplex moment map} of $F$. 
The equations {\rm (}\ref{moment_cr}{\rm )} 
are called the CR {\rm (}Cauchy-Riemann{\rm )} 
equations and the condition 
{\rm (}\ref{moment_trans}{\rm )} is called the transversality condition.
}
\end{definition}

A hypercomplex moment map produces another hypercomplex manifold by 
a quotient (Proposition 3.1 in \cite{J}). 
Let $(M,Q)$ be a quaternionic manifold with  
a quaternionic connection $\nabla$ and 
an affine quaternionic vector field $X$. 
The following lemmas hold.

\begin{lemma}\label{horizontal}
If $X$ is an affine quaternionic vector field on $(M,Q,\nabla)$ and 
$\bar{\theta}$ is the principal $\mathbb{R}^{>0} \times \mathrm{SO}(3)$-connection 
on $\hat{M}$ induced by $\nabla$, 
then $L_{\hat{X}} \bar{\theta}=0$ and $L_{\hat{X}} \hat{I}^{\bar{\theta},c}_{\alpha}=0$. 
\end{lemma}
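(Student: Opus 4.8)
The plan is to show the two invariance statements by reducing them to properties that have already been established in the excerpt, chiefly Lemma \ref{affine_eq} (together with the equivalences that follow from $L_X\nabla = 0$), Lemma \ref{component}, and Lemma \ref{invariant}. The key observation is that the natural lift $\hat X$ is, by its very definition, the infinitesimal generator of the flow $\hat\varphi_t$, which acts on $\hat M = \triangle^\ast(S_0\times S)$ by pushing forward the admissible frames and volume elements along $\varphi_t$. Since $X$ is a quaternionic vector field, $\varphi_{-t}^\ast I_j \in Q$ for all $j$, so $\hat\varphi_t$ is a well-defined bundle automorphism covering $\varphi_t$; and since $X$ is affine, $\varphi_{-t}^\ast\nabla = \nabla$, hence $\hat\varphi_t$ preserves the principal connection $\bar\theta$ induced by $\nabla$. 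Differentiating at $t=0$ gives $L_{\hat X}\bar\theta = 0$. I would spell this out by noting that $\varphi_{-t}^\ast\nabla - \nabla = S^{\xi_t}$ with $\xi_t \equiv 0$ when $L_X\nabla = 0$ (this is exactly equation (\ref{LX_eq}) in the proof of Lemma \ref{affine_eq}), so the principal connections $\theta_0^t$ on $S_0$ and $\theta^t$ on $S$ induced by $\varphi_{-t}^\ast\nabla$ coincide with $\theta_0$, $\theta$ for all $t$; pulling back under $\triangle_\#$ shows $\hat\varphi_t^\ast\bar\theta = \bar\theta$.

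For the second statement, $L_{\hat X}\hat I^{\bar\theta,c}_\alpha = 0$, I would check the equation separately on the vertical distribution $\bar{\cal V}$ and on a horizontal distribution $\bar{\cal H}$. On $\bar{\cal V}$: the vertical distribution is spanned by the fundamental vector fields $Z_0^c, Z_1, Z_2, Z_3$, and by Lemma \ref{invariant} we have $[\hat X, Z_a] = 0$ for $a = 0,1,2,3$; since $\hat I^{\bar\theta,c}_\alpha$ acts on these generators by constant coefficients (e.g.\ $\hat I^{\bar\theta,c}_\alpha Z_0^c = -Z_\alpha$, etc.), one gets $(L_{\hat X}\hat I^{\bar\theta,c}_\alpha)(Z_a) = [\hat X, \hat I^{\bar\theta,c}_\alpha Z_a] - \hat I^{\bar\theta,c}_\alpha[\hat X, Z_a] = 0$. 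On $\bar{\cal H}$: because $L_{\hat X}\bar\theta = 0$, the flow $\hat\varphi_t$ preserves the horizontal distribution $\bar{\cal H} = \operatorname{Ker}\bar\theta$, so $\hat\varphi_{t\ast}$ maps a horizontal lift $Y^{\bar h}$ to another horizontal vector; I claim it is precisely $(\varphi_{t\ast}Y)^{\bar h}$. Indeed the horizontal lift of a vector is characterized by being horizontal and projecting to that vector, and $\hat\pi\circ\hat\varphi_t = \varphi_t\circ\hat\pi$, so $\hat\pi_\ast(\hat\varphi_{t\ast}Y^{\bar h}) = \varphi_{t\ast}Y$. Using this and the defining formula $(\hat I^{\bar\theta,c}_\alpha)_{(x,(\rho,s))}(Y^{\bar h}) = (I_\alpha(\hat\pi_\ast Y^{\bar h}))^{\bar h}$ together with $\hat\varphi_t^\ast I_\alpha = \varphi_{t\ast}I_\alpha\varphi_{t\ast}^{-1}$ entering through the frame-pushforward in the definition of $\hat\varphi_t$, a direct computation shows that $\hat\varphi_t$ intertwines $\hat I^{\bar\theta,c}_\alpha$ with itself at the level of horizontal lifts; differentiating at $t=0$ yields $(L_{\hat X}\hat I^{\bar\theta,c}_\alpha)(Y^{\bar h}) = 0$.

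I expect the main subtlety to be the horizontal-lift bookkeeping: one must verify carefully that $\hat\varphi_{t\ast}$ carries $Y^{\bar h}$ to $(\varphi_{t\ast}Y)^{\bar h}$ and simultaneously acts on the frame component $s = (I_1,I_2,I_3)$ as $\varphi_{-t}^\ast$, so that the two effects combine to give exactly $\hat I^{\bar\theta,c}_\alpha$-equivariance of $\hat\varphi_t$ on horizontal vectors — this is where the hypothesis that $X$ is quaternionic (ensuring $\varphi_{-t}^\ast Q = Q$) and the hypothesis $L_X\nabla = 0$ (ensuring $\bar\theta$ is preserved, hence $\bar{\cal H}$ is preserved) are both genuinely used. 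Everything else is formal: the vertical check is immediate from Lemma \ref{invariant}, and the identity $L_{\hat X}\bar\theta = 0$ follows from the vanishing of $\xi_t$ noted in the proof of Lemma \ref{affine_eq}. It may be cleanest to phrase the whole argument flow-theoretically, proving $\hat\varphi_t^\ast\bar\theta = \bar\theta$ and $\hat\varphi_t^\ast\hat I^{\bar\theta,c}_\alpha = \hat I^{\bar\theta,c}_\alpha$ first and then differentiating, rather than manipulating Lie brackets directly.
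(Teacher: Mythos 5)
Your proposal is correct and follows essentially the same route as the paper: the paper's (much terser) proof likewise argues that $\hat\varphi_t$, being induced by quaternionic affine transformations, preserves the horizontal distribution and the canonically associated almost hypercomplex structure, and then differentiates. Your more detailed bookkeeping (vertical check via Lemma \ref{invariant}, horizontal check via $\hat\varphi_{t\ast}Y^{\bar h}=(\varphi_{t\ast}Y)^{\bar h}$ and the frame pushforward) is a valid expansion of that same argument.
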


\begin{proof}
The first equation follows from the fact that $\hat{\varphi}_{t}$ preserves the horizontal distribution, 
because $\hat{\varphi}_{t}$ is induced by a local flow ${\varphi}_{t}$
of affine transformations preserving 
the quaternionic structure. Since the almost hypercomplex structure 
$(\hat{I}^{\bar{\theta},c}_{1},\hat{I}^{\bar{\theta},c}_{2},\hat{I}^{\bar{\theta},c}_{3})$ 
is canonically associated with the data $(Q,\nabla)$ on $M$, 
it is also invariant under $\hat{\varphi}_{t}$, which implies the second equation. 
\end{proof}

From now on we assume that there exists $\nu \in \Gamma(S_{0})$ 
such that $L_{X} \nu=0$. 
Then we can identify $S_{0}=M \times {\mathbb R}^{>0}$, 
$\hat{M}=S \times {\mathbb R}^{>0}$ and 
$\hat{X}$ is a tangent vector field on $S$
by Lemma \ref{triviality}. 
In the next lemma, we identify $S$ 
with the $\hat{\varphi}_{t}$-invariant submanifold 
$S \times \{1\} \subset \hat{M}=S\times {\mathbb R}^{>0}$.

\begin{lemma}\label{L_X}
Under the above assumption, $L_{\hat{X}} \theta=0$. 
Moreover $\bar{\cal H}|_{S}=\cal{H}$ if and only if 
$\nabla \nu=0$. 
\end{lemma}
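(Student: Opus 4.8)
The plan is to prove the two assertions of Lemma \ref{L_X} separately, both by reducing to structures on $M$ that are manifestly invariant under the flow $\varphi_t$ of $X$. For the first assertion, $L_{\hat X}\theta = 0$, I would argue exactly as in the proof of Lemma \ref{horizontal}: the connection form $\theta$ on $S$ is canonically induced by the quaternionic connection $\nabla$, and the lifted flow $\hat\varphi_t$ restricted to $S$ is induced by the flow $\varphi_t$ of the affine quaternionic vector field $X$. Since $\varphi_t$ preserves $Q$ (as $X$ is quaternionic) and preserves $\nabla$ (as $L_X\nabla=0$), the induced diffeomorphism of $S$ preserves the horizontal distribution $\mathcal H = \operatorname{Ker}\theta$ and commutes with the principal $\mathrm{SO}(3)$-action; hence it preserves $\theta$ itself, giving $\hat\varphi_t^\ast\theta=\theta$ and therefore $L_{\hat X}\theta=0$ upon differentiating at $t=0$. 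Alternatively one can note that $L_{\hat X}\bar\theta=0$ from Lemma \ref{horizontal} and that under the identification $\hat M = S\times\mathbb R^{>0}$ with $\hat X$ tangent to $S$, the $\mathfrak{so}(3)$-component of $\bar\theta$ restricted to $TS$ is $\theta$.

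For the second assertion I would unwind the definitions. Under the trivialization $\hat M = S\times\mathbb R^{>0}$ coming from $\nu$, the connection $\bar\theta$ is the triple $(\theta_0\circ pr_{TS_0},\theta\circ pr_{TS})$ pulled back along the diagonal, and its $\mathbb R$-component $\theta_0$ is the principal $\mathbb R^{>0}$-connection on $S_0 = M\times\mathbb R^{>0}$ induced by $\nabla$. The restriction $\bar{\mathcal H}|_S$ is the set of vectors in $T(S\times\{1\})$ annihilated by both $\theta$ and $\theta_0$; since every such vector is automatically annihilated by $\theta$, the condition $\bar{\mathcal H}|_S = \mathcal H$ is equivalent to: every $\theta$-horizontal vector at a point of $S\times\{1\}$ is also $\theta_0$-horizontal. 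Concretely, writing $\theta_0$ in the trivialization as $\theta_0 = \tfrac{ds}{s} + \hat\pi^\ast\zeta$ for a one-form $\zeta$ on $M$ measuring the failure of $\nu$ to be $\nabla$-parallel (so that $\nabla\nu = -\zeta\otimes\nu$ up to normalization), a horizontal lift $Y^{h}\in\mathcal H$ at $(s,1)$ satisfies $\theta_0(Y^h) = \zeta(Y)$; this vanishes for all $Y$ exactly when $\zeta=0$, i.e.\ when $\nabla\nu=0$. Thus $\bar{\mathcal H}|_S=\mathcal H$ iff $\nabla\nu=0$.

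The step I expect to require the most care is pinning down the precise relation between $\nabla\nu$ and the one-form $\zeta$ appearing in $\theta_0$, including the correct constant, and being careful about the $\varepsilon=\pm 1$ convention distinguishing the left/right principal bundle picture; the identification $\hat M = S\times\mathbb R^{>0}$ and the decomposition $T_p\hat M\cong T_sS\oplus\mathbb R$ must be used consistently with the earlier conventions. Once $\theta_0$ is written explicitly in the $\nu$-trivialization, the equivalence is immediate from the above. The first assertion is essentially a restatement of Lemma \ref{horizontal} in the restricted setting and carries no real obstacle.
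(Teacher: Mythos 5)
Your proposal is correct and follows essentially the same route as the paper: the first assertion is deduced from $L_{\hat{X}}\bar{\theta}=0$ (Lemma \ref{horizontal}) together with $pr_{\mathfrak{so}(3)}\bar{\theta}|_{S}=\theta$, and the second from the identity $pr_{\mathbb{R}}\bar{\theta}|_{S}=(\nu\circ\pi_{S})^{\ast}\theta_{0}$ combined with $\nabla\nu=(\nu^{\ast}\theta_{0})\otimes\nu$, your $\zeta$ being exactly $\nu^{\ast}\theta_{0}$. Only minor notational slips (the pullback of $\zeta$ should be along $\pi_{S_{0}}$ rather than $\hat{\pi}$, and $\bar{\cal H}|_{S}$ is a priori a subspace of $T\hat{M}$ rather than of $TS$, though the dimension count you implicitly use fixes this) separate your write-up from the paper's.
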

\begin{proof}
The projection from ${\mathbb R} \oplus \mathfrak{so}(3)$ 
onto ${\mathbb R}$ (resp. $\mathfrak{so}(3)$) is denoted by 
$pr_{\mathbb{R}}$ (resp. $pr_{\mathfrak{so}(3)}$). 
The first statement follows from the previous lemma, since 
$pr_{\mathfrak{so}(3)} \bar{\theta}|_{S}=\theta$.
The second statement follows from 
$pr_{\mathbb{R}} \bar{\theta}|_{S}=(\nu \circ \pi_{S})^{\ast} \theta_{0}$, 
since $\nabla \nu=(\nu^{\ast} \theta_{0}) \otimes \nu$. 
\end{proof}

For $1 \in \mathbb{R} \cong T_{1} {\mathbb R}^{>0}$, 
at $\rho \in S_{0}$, we have
\[ (\widetilde{e_{0}})_{\rho}=\widetilde{1}_{\rho}
 = \left. \frac{d}{dr} \rho \exp (\varepsilon t ) \right|_{t=0} 
 = \varepsilon \rho 
 = \left. \varepsilon r \frac{\partial }{\partial r} \right|_{\rho}, \]
where $r$ is the standard coordinate on ${\mathbb R}^{>0}$. 
Let $\nabla$ be a quaternionic connection on $(M,Q)$. 
We define 1-forms  $\hat{\theta}^{c}_{\alpha}$ on $\hat{M}$ $(\alpha=1,2,3)$ by
\[ \hat{\theta}^{c}_{\alpha}|_{TS} := A r^{\frac{2}{c}} \theta_{\alpha} 
\,\,\, \mbox{and}\,\,\,  \hat{\theta}^{c}_{\alpha}(Z^{c}_{0})=0 \]
where $A\in \mathbb{R}$ is a constant. 
A symmetric tensor $\langle \theta, \theta \rangle$ is defined by 
\[ \langle \theta, \theta \rangle(Y,Z)=\sum_{i=1}^{3} 
  \theta_{i}(Y) \theta_{i}(Z) \]
for $Y$ and $Z \in T \hat{M}$
and we set
\begin{eqnarray}\label{two_forms}
 G^{c}_{\alpha}
:= -A r^{\frac{2}{c}} \Omega_{\alpha} 
(\, \cdot \, ,\hat{I}^{\bar{\theta},c}_{\alpha} \,\, \cdot  \,\, )
       +2 \varepsilon A r^{\frac{2}{c}} \langle \theta, \theta \rangle
    +\frac{2 \varepsilon A}{c^{2}} r^{\frac{2}{c}-2}(dr \otimes dr). 
\end{eqnarray} 
Note that $G^{c}_{1}|_{{\cal V} \times {\cal V}}
=G^{c}_{2}|_{{\cal V} \times \cal{V}}
=G^{c}_{3}|_{{\cal V} \times \cal{V}}$, 
that is, the vertical components of $G^{c}_{\alpha}$ 
are independent of $\alpha$.


\begin{lemma}\label{ext_der_theta}
We have 
$d \hat{\theta}^{c}_{\alpha} (Y,Z) 
= G^{c}_{\alpha}(Y,\hat{I}^{\bar{\theta},c}_{\alpha}Z)$ 
for $Y$, $Z \in T \hat{M}$. 
\end{lemma}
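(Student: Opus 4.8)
The strategy is to verify the identity $d\hat\theta^c_\alpha(Y,Z) = G^c_\alpha(Y,\hat I^{\bar\theta,c}_\alpha Z)$ by checking it on each of the three types of pairs of vector fields afforded by the decomposition $T\hat M = \bar{\mathcal V}\oplus\bar{\mathcal H}$, namely (vertical, vertical), (vertical, horizontal), and (horizontal, horizontal). Since both sides are tensorial, it suffices to evaluate on a convenient local frame: the fundamental vector fields $Z^c_0, Z_1, Z_2, Z_3$ spanning $\bar{\mathcal V}$, and horizontal lifts $X^{\bar h}$ of vector fields on $M$. Throughout I would use the splitting $\hat M = S\times\mathbb R^{>0}$ (available once $\nu\in\Gamma(S_0)$ with $L_X\nu=0$ is fixed) so that $\hat\theta^c_\alpha|_{TS} = A r^{2/c}\theta_\alpha$ and $\hat\theta^c_\alpha(Z^c_0)=0$ make literal sense, together with $\widetilde e_0 = \varepsilon r\,\partial_r$, hence $Z^c_0 = \varepsilon c\, r\,\partial_r$.

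First I would handle the purely vertical case. Using $d\hat\theta^c_\alpha(U,V) = U(\hat\theta^c_\alpha(V)) - V(\hat\theta^c_\alpha(U)) - \hat\theta^c_\alpha([U,V])$ and the known brackets $[Z_a,Z_b]$ (with $[Z_\alpha,Z_\beta]=2\varepsilon Z_\gamma$, $[Z^c_0,Z_\alpha]=0$, and $Z_\alpha(r)=0$), one computes $d\hat\theta^c_\alpha$ on all vertical pairs; the only nonzero contributions come from the $\langle\theta,\theta\rangle$ and $dr\otimes dr$ terms in $G^c_\alpha$. One must check, e.g., that $d\hat\theta^c_\alpha(Z_\beta,Z_\gamma)$ matches $G^c_\alpha(Z_\beta,\hat I_\alpha Z_\gamma) = -G^c_\alpha(Z_\beta,Z_\beta) = -2\varepsilon A r^{2/c}$, that $d\hat\theta^c_\alpha(Z^c_0,Z_\alpha)$ matches $G^c_\alpha(Z^c_0,Z^c_0)$, using $Z^c_0(r^{2/c}) = \varepsilon c r\,\partial_r(r^{2/c}) = 2\varepsilon r^{2/c}$, and that the mixed terms like $d\hat\theta^c_\alpha(Z_\alpha,Z_\beta)$ and $d\hat\theta^c_\alpha(Z^c_0,Z_\beta)$ vanish, matching $G^c_\alpha(Z_\alpha,Z_\gamma)=0$ and $G^c_\alpha(Z^c_0,-Z_\gamma)=0$. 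This reduces to bookkeeping with the factor $A r^{2/c}$ and the derivative of $r^{2/c}$ along $Z^c_0$. Next, for the (vertical, horizontal) case, I would use that $Z^c_0$ and $Z_\alpha$ preserve the horizontal distribution (Lemma~\ref{Lie_der_hyp_cpx} and its proof), so $[Z_a, X^{\bar h}]$ is again horizontal and $\theta_\alpha([Z_a,X^{\bar h}])=0$; combined with $\hat\theta^c_\alpha(X^{\bar h})=0$ and $X^{\bar h}(r^{2/c})=0$ (since $X^{\bar h}$ is horizontal, hence tangent to $S$), one gets $d\hat\theta^c_\alpha(Z_a, X^{\bar h})=0$, which must be matched against $G^c_\alpha(Z_a,\hat I_\alpha X^{\bar h})$; here $\hat I_\alpha X^{\bar h}$ is again horizontal, so the $\langle\theta,\theta\rangle$ and $dr\otimes dr$ terms vanish and only $-A r^{2/c}\Omega_\alpha(Z_a,\hat I_\alpha\cdot)$ survives, which is zero because $\Omega_\alpha$ is horizontal (\ref{curvatute_holizontal}).

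The substantive case is the (horizontal, horizontal) one. Here $d\hat\theta^c_\alpha(X^{\bar h},Y^{\bar h}) = X^{\bar h}(A r^{2/c}\theta_\alpha(Y^{\bar h})) - Y^{\bar h}(\cdots) - A r^{2/c}\theta_\alpha([X^{\bar h},Y^{\bar h}])$; since $\theta_\alpha$ vanishes on horizontal vectors and $X^{\bar h}(r)=0$, the first two terms drop and what remains is $-A r^{2/c}\theta_\alpha([X^{\bar h},Y^{\bar h}])$, which by definition of curvature equals $A r^{2/c}\Omega_\alpha(X^{\bar h},Y^{\bar h})$ on the $S$-factor — up to the sign/$\varepsilon$ conventions of (\ref{curvatute_holizontal}) and the pull-back $\Omega^U_\alpha$. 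On the other side, $G^c_\alpha(X^{\bar h},\hat I_\alpha Y^{\bar h})$: since $\hat I_\alpha Y^{\bar h} = (I_\alpha Y)^{\bar h}$ is horizontal, the $\langle\theta,\theta\rangle$ and $dr\otimes dr$ pieces vanish, leaving $-A r^{2/c}\Omega_\alpha(X^{\bar h},\hat I_\alpha(\hat I_\alpha Y^{\bar h})) = -A r^{2/c}\Omega_\alpha(X^{\bar h}, -Y^{\bar h}) = A r^{2/c}\Omega_\alpha(X^{\bar h},Y^{\bar h})$, using $\hat I_\alpha^2 = -\mathrm{id}$. So the two expressions agree. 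The main obstacle, and the point requiring care, is getting all signs and factors of $\varepsilon$ and $1/2$ consistent between the four-term curvature identity, the normalization $\Omega^U=(1/2)\sum\Omega^U_\alpha e_\alpha$, and the relation $\varepsilon\Omega|_{s(U)} = \pi_S^*\Omega^U|_{s(U)}$, as well as correctly accounting for the derivative of $r^{2/c}$ along $Z^c_0$ in the vertical case; everything else is a routine frame-by-frame verification.
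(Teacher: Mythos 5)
Your proposal is correct and takes essentially the same route as the paper: both sides are checked case by case on the frame consisting of $Z_0^c,Z_1,Z_2,Z_3$ and horizontal lifts, with the only nontrivial inputs being $Z_0^c(r^{2/c})=2\varepsilon r^{2/c}$, the brackets $[Z_\alpha,Z_\beta]=2\varepsilon Z_\gamma$, the horizontality of $\Omega_\alpha$, and $\hat{I}_\alpha^2=-\mathrm{id}$ in the horizontal--horizontal case, exactly as in the paper's computation. The one point to tighten is your parenthetical ``$X^{\bar h}$ is horizontal, hence tangent to $S$'': by Lemma \ref{L_X} this holds precisely when $\nabla\nu=0$ (the implicit standing choice of connection coming from Corollary \ref{ex_affine_con}); without that, the mixed term $d\hat{\theta}^c_\alpha(Z_\alpha,X^{\bar h})=-X^{\bar h}(Ar^{2/c})$ would not vanish, so the tangency should be justified by the choice of $\nabla$ rather than by horizontality alone.
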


\begin{proof}
Put $f(r)=Ar^{ \frac{2}{c} }$. Then 
\[ G^{c}_{\alpha}(Y,Z)
= - f(r) \Omega_{\alpha} (Y,\hat{I}^{\bar{\theta},c}_{\alpha}Z)
                          +2 \varepsilon f(r) \langle \theta, \theta \rangle(Y,Z)
                          +\frac{2 \varepsilon }{c^{2}} \frac{f(r)}{r^{2}}(dr \otimes dr)(Y,Z)
\]
for $Y$, $Z \in T \hat{M}$. Since $Z_{0}^{c}=\varepsilon c r \frac{\partial }{\partial r}$, 
we obtain 
\[ d \hat{\theta}^{c}_{\alpha} (Z_{0}^{c},Z_{\alpha})=\varepsilon cr f^{\prime}(r)
    =\varepsilon cr \cdot \frac{2 A}{c} r^{\frac{2}{c}-1}=2 \varepsilon f(r), \] 
\[ G^{c}_{\alpha}(Z_{0}^{c},\hat{I}^{\bar{\theta},c}_{\alpha}(Z_{\alpha}))
=G^{c}_{\alpha}(Z_{0}^{c},Z_{0}^{c}) 
= c^{2} r^{2} \cdot \frac{2 \varepsilon}{c^{2}} \frac{f(r)}{r^{2}}=2 \varepsilon f(r) \]
and
\[ G^{c}_{\alpha}(Z_{\alpha},\hat{I}^{\bar{\theta},c}_{\alpha}(Z_{0}^{c}))
=- G^{c}_{\alpha}( Z_{\alpha}, Z_{\alpha} ) =-2 \varepsilon f(r). \]
Moreover 
we have
\[ d \hat{\theta}^{c}_{\alpha} (Z_{\beta},Z_{\gamma})
=-\hat{\theta}^{c}_{\alpha}([Z_{\beta},Z_{\gamma}]) 
=-2 \varepsilon f(r) \]
and
\[ G^{c}_{\alpha}(Z_{\beta},\hat{I}^{\hat{\theta},c}_{\alpha}(Z_{\gamma}))
   =-G^{c}_{\alpha}(Z_{\beta},Z_{\beta})=-2 \varepsilon f(r), \]
similarly 
$G^{c}_{\alpha}(Z_{\gamma},\hat{I}^{\bar{\theta},c}_{\alpha}(Z_{\beta}))
=2 \varepsilon f(r)$. 
Finally, we see
\[ d \hat{\theta}^{c}_{\alpha} (Y^{h},Z^{h})
=f(r) (d \theta^{c}_{\alpha})(Y^{h},Z^{h}) 
=f(r) \Omega_{\alpha} (Y^{h},Z^{h}) \]
and
\[ G^{c}_{\alpha}(Y^{h},\hat{I}^{\bar{\theta},c}_{\alpha}Z^{h})
   =f(r) \Omega_{\alpha} (Y^{h},Z^{h}). \]
For other combinations of tangent vectors on $\hat{M}$, both tensors
$d \hat{\theta}_{\alpha}$, $G_{\alpha}$ vanish. 
\end{proof}


We define $\mu^{c} : \hat{M} \to {\mathbb R}^{3}$ by 
\begin{equation}  \label{mm:eq} \mu^{c}(x)
=\hat{\theta}^{c}(\hat{X}_{x})=(\hat{\theta}^{c}_{1}(\hat{X}_{x}), 
\hat{\theta}^{c}_{2}(\hat{X}_{x}), 
\hat{\theta}^{c}_{3}(\hat{X}_{x})) \end{equation}
for $x \in \hat{M}$. 
We calculate some formulae which will be used later 
to determine sufficient  conditions for $\mu^{c}$ to be a hypercomplex moment map.

\begin{lemma}\label{ext_der_theta_cpx}
If $X$ is an affine quaternionic vector field on $(M,Q,\nabla)$, then 
we have 
\[ d \mu^{c}_{\alpha} = - \iota_{\hat{X}} d \hat{\theta}^{c}_{\alpha}.
\] 
\end{lemma}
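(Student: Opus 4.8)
The plan is to use Cartan's magic formula together with the $\hat X$-invariance established in Lemma~\ref{horizontal} and Lemma~\ref{L_X}. Since $\mu^c_\alpha = \hat\theta^c_\alpha(\hat X) = \iota_{\hat X}\hat\theta^c_\alpha$, we have by Cartan's formula
\[
d\mu^c_\alpha = d\,\iota_{\hat X}\hat\theta^c_\alpha
= L_{\hat X}\hat\theta^c_\alpha - \iota_{\hat X}\,d\hat\theta^c_\alpha,
\]
so the claim reduces to showing $L_{\hat X}\hat\theta^c_\alpha = 0$.

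First I would recall that $\hat\theta^c_\alpha$ is built out of the data $r$ (the coordinate on the $\mathbb{R}^{>0}$-factor) and $\theta_\alpha$ (the components of the principal $\mathrm{SO}(3)$-connection $\theta$), via $\hat\theta^c_\alpha|_{TS} = A r^{2/c}\theta_\alpha$ and $\hat\theta^c_\alpha(Z^c_0)=0$. Under the standing assumption that there exists $\nu\in\Gamma(S_0)$ with $L_X\nu = 0$, Lemma~\ref{triviality} lets us identify $\hat M = S\times\mathbb{R}^{>0}$ with $\hat X$ a vector field tangent to the $S$-factor; in particular $\hat X$ annihilates $r$, so $L_{\hat X}(r^{2/c}) = \hat X(r^{2/c}) = 0$. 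It then remains to handle the factor $\theta_\alpha$: by Lemma~\ref{L_X}, $L_{\hat X}\theta = 0$, hence $L_{\hat X}\theta_\alpha = 0$ for each $\alpha$ (the $e_\alpha$ are a fixed basis). Combining these via the Leibniz rule for the Lie derivative, $L_{\hat X}\hat\theta^c_\alpha = (L_{\hat X}(Ar^{2/c}))\theta_\alpha + Ar^{2/c}(L_{\hat X}\theta_\alpha) = 0$ on $TS$, and since $\hat X$ is tangent to $S$ and commutes with $Z^c_0$ (Lemma~\ref{invariant}), the identity $\hat\theta^c_\alpha(Z^c_0)=0$ is also preserved, so $L_{\hat X}\hat\theta^c_\alpha = 0$ on all of $T\hat M$.

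The one point requiring a little care — and the place I expect the only genuine (if minor) obstacle — is the bookkeeping around the identification $\hat M = S\times\mathbb{R}^{>0}$: one must be sure that $\hat\theta^c_\alpha$ as defined really is $L_{\hat X}$-closed globally, i.e.\ that the splitting into an "$S$-part" and "$Z^c_0$-direction" is respected by $\hat X$. This is exactly what Lemma~\ref{invariant} ($[\hat X,\widetilde B]=0$ for $B\in\mathbb{R}\oplus\mathfrak{so}(3)$, in particular $[\hat X,Z^c_0]=0$) and Lemma~\ref{L_X} together guarantee. Once $L_{\hat X}\hat\theta^c_\alpha = 0$ is in hand, Cartan's formula gives $d\mu^c_\alpha = -\iota_{\hat X}d\hat\theta^c_\alpha$ immediately, completing the proof.
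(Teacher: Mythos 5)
Your proposal is correct and follows essentially the same route as the paper: both reduce the claim via Cartan's formula to showing $L_{\hat{X}}\hat{\theta}^{c}_{\alpha}=0$, which is obtained from $L_{\hat{X}}\theta=0$ (Lemma \ref{L_X}) and $[\hat{X},Z^{c}_{0}]=0$ (Lemma \ref{invariant}). Your extra bookkeeping about the factor $r^{2/c}$ and the splitting $\hat{M}=S\times\mathbb{R}^{>0}$ is a harmless elaboration of what the paper leaves implicit.
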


\begin{proof}
By Lemmas \ref{invariant} and \ref{L_X}, $L_{\hat{X}} \theta_{\alpha}=0$ and 
$[\hat{X}, Z^{c}_{0}]=0$.
It follows 
$L_{\hat{X}} \hat{\theta}^{c}_{\alpha}=0$ and 
$d \mu^{c}_{\alpha}
=  d \iota_{\hat{X}} \hat{\theta}^{c}_{\alpha}
= L_{\hat{X}} \hat{\theta}^{c}_{\alpha} -\iota_{\hat{X}} d \hat{\theta}^{c}_{\alpha} 
=-\iota_{\hat{X}} d \hat{\theta}^{c}_{\alpha}$.
\end{proof}


For the CR-condition for $\mu^{c}$, we have

\begin{lemma}\label{cr_cond}
If $X$ is an affine quaternionic vector field on $(M,Q,\nabla)$, then
we have 
\begin{align}
&(d \mu^{c}_{\alpha} \circ \hat{I}^{\bar{\theta},c}_{\alpha})(Z^{c}_{0}) 
= 0, \label{funda1}\\
&(d \mu^{c}_{1} \circ \hat{I}^{\bar{\theta},c}_{1})(\widetilde{B}) 
= 
(d \mu^{c}_{2} \circ \hat{I}^{\bar{\theta},c}_{2})(\widetilde{B}) 
=(d \mu^{c}_{3} \circ \hat{I}^{\bar{\theta},c}_{3})(\widetilde{B})
\,\, \mbox{for any} \,\, B \in \mathfrak{so}(3), \label{funda2} \\ 
&(d \mu^{c}_{\alpha} \circ \hat{I}^{\bar{\theta},c}_{\alpha})(Y)   
= - A r ^{\frac{2}{c}} \Omega_{\alpha}(\hat{X},\hat{I}^{\hat{\theta},c}_{\alpha}Y), \label{funda3}
\end{align}
for all horizontal vector $Y$. 
\end{lemma}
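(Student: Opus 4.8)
The plan is to establish the three identities of Lemma \ref{cr_cond} by combining the Cartan-type formula $d\mu^c_\alpha = -\iota_{\hat X}\,d\hat\theta^c_\alpha$ from Lemma \ref{ext_der_theta_cpx} with the explicit description of $d\hat\theta^c_\alpha$ in terms of $G^c_\alpha$ from Lemma \ref{ext_der_theta}. Concretely, for any tangent vector $W$ at a point of $\hat M$ we have
\[
(d\mu^c_\alpha\circ\hat I^{\bar\theta,c}_\alpha)(W)
= -\,d\hat\theta^c_\alpha\bigl(\hat X,\hat I^{\bar\theta,c}_\alpha W\bigr)
= -\,G^c_\alpha\bigl(\hat X,\hat I^{\bar\theta,c}_\alpha\hat I^{\bar\theta,c}_\alpha W\bigr)
= G^c_\alpha(\hat X,W),
\]
using $(\hat I^{\bar\theta,c}_\alpha)^2=-\mathrm{id}$. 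So the whole lemma reduces to evaluating $G^c_\alpha(\hat X,W)$ for the three types of argument $W=Z^c_0$, $W=\widetilde B$ with $B\in\mathfrak{so}(3)$, and $W=Y$ horizontal, and reading off the claimed formulas.

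First I would treat \eqref{funda1}: take $W=Z^c_0$. Recalling that under the trivialization $\hat M=S\times\mathbb R^{>0}$ the vector field $\hat X$ is tangent to $S$ (Lemma \ref{triviality}), and that $Z^c_0=\varepsilon c r\,\partial/\partial r$ is tangent to the $\mathbb R^{>0}$-factor, each of the three terms in the definition \eqref{two_forms} of $G^c_\alpha$ vanishes when one slot is $Z^c_0$ and the other is tangent to $S$: the curvature term $\Omega_\alpha(\,\cdot\,,\hat I^{\bar\theta,c}_\alpha\,\cdot\,)$ vanishes because $\Omega$ is horizontal for $\theta$ and $\hat I^{\bar\theta,c}_\alpha Z^c_0=-Z_\alpha$ is vertical while $\hat X$ on $S$ has no $\partial/\partial r$ component to be hit; the $\langle\theta,\theta\rangle$ term vanishes because $\theta_i(Z^c_0)=0$; and the $dr\otimes dr$ term vanishes because $dr(\hat X)=0$. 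Hence $G^c_\alpha(\hat X,Z^c_0)=0$, giving \eqref{funda1}.

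Next, \eqref{funda2}: take $W=\widetilde B$ for $B\in\mathfrak{so}(3)$, so $\widetilde B$ is a combination of $Z_1,Z_2,Z_3$. Here the key point is that the vertical components of $G^c_\alpha$ are independent of $\alpha$ — this is the parenthetical remark right after \eqref{two_forms}, which follows because on $\mathcal V\times\mathcal V$ the curvature term and the $dr\otimes dr$ term drop out and only $2\varepsilon A r^{2/c}\langle\theta,\theta\rangle$ survives. Since $\hat X$ restricted to $S$ splits as a horizontal part plus a vertical part, and $G^c_\alpha$ pairs the horizontal part of $\hat X$ against the vertical $\widetilde B$ via the curvature term $-A r^{2/c}\Omega_\alpha(\,\cdot\,,\hat I^{\bar\theta,c}_\alpha\,\cdot\,)$ — which again vanishes because $\Omega_\alpha$ is horizontal while $\hat I^{\bar\theta,c}_\alpha\widetilde B$ is vertical — only the purely vertical pairing $2\varepsilon A r^{2/c}\langle\theta,\theta\rangle(\hat X_{\mathcal V},\widetilde B)$ remains, and this is manifestly independent of $\alpha$. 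This yields \eqref{funda2}. Finally, \eqref{funda3}: take $W=Y$ horizontal. Then $\langle\theta,\theta\rangle(\hat X,Y)$ picks up only the vertical part of $\hat X$ paired with $Y$, but $Y$ is horizontal so $\theta_i(Y)=0$ and this term vanishes; likewise $dr(Y)=0$ kills the last term; so $G^c_\alpha(\hat X,Y)=-A r^{2/c}\Omega_\alpha(\hat X,\hat I^{\bar\theta,c}_\alpha Y)$, which is exactly \eqref{funda3}. The main obstacle is bookkeeping: one must be careful about how $\hat X$ decomposes into horizontal and vertical parts and verify, term by term in \eqref{two_forms}, that the curvature term $\Omega_\alpha(\,\cdot\,,\hat I^{\bar\theta,c}_\alpha\,\cdot\,)$ really does annihilate every vertical argument (since $\hat I^{\bar\theta,c}_\alpha$ permutes vertical directions among themselves and $\Omega_\alpha$ is horizontal). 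Once this horizontality observation is made precise, all three identities drop out immediately from the single formula $(d\mu^c_\alpha\circ\hat I^{\bar\theta,c}_\alpha)(W)=G^c_\alpha(\hat X,W)$.
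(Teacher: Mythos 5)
Your proposal is correct and takes essentially the same route as the paper: both reduce the lemma to the single identity $(d\mu^{c}_{\alpha}\circ\hat{I}^{\bar{\theta},c}_{\alpha})(W)=G^{c}_{\alpha}(\hat{X},W)$ by combining Lemmas \ref{ext_der_theta} and \ref{ext_der_theta_cpx}, and then evaluate $G^{c}_{\alpha}(\hat{X},\cdot)$ on vertical and horizontal arguments using the horizontality of $\Omega_{\alpha}$, the vanishing of $\theta_{i}$ and $dr$ on the relevant vectors, and the $\alpha$-independence of the vertical part of $G^{c}_{\alpha}$. The paper's proof is just a terser version of the same term-by-term bookkeeping.
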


\begin{proof}
By Lemmas \ref{ext_der_theta} and \ref{ext_der_theta_cpx}, we have 
$d \mu^{c}_{\alpha} \circ \hat{I}^{\bar{\theta},c}_{\alpha}
=G^{c}_{\alpha}(\hat{X}, \, \cdot \,)$. 
Then it is easy to see 
$(d \mu^{c}_{\alpha} \circ \hat{I}^{\bar{\theta},c}_{\alpha})(Z^{c}_{0})
=G^{c}_{\alpha}(\hat{X},Z^{c}_{0})=0$. Since $G^{c}_{1}=G^{c}_{2}=G^{c}_{3}$ 
on $\bar{\cal V} \times T \hat{M}$, we obtain (\ref{funda2}). 
Finally for horizontal vector $Y$ we have 
$(d \mu^{c}_{\alpha} \circ \hat{I}^{\bar{\theta},c}_{\alpha})(Y)
=G^{c}_{\alpha}(\hat{X},Y)=
 - A r ^{\frac{2}{c}} \Omega_{\alpha}(\hat{X},\hat{I}^{\bar{\theta},c}_{\alpha}Y)$.
\end{proof}


For the transversality condition for $\mu^{c}$, 
we state the next lemma, which follows  
from the equation $d \mu^{c}_{\alpha} \circ \hat{I}^{\bar{\theta},c}_{\alpha}
=G^{c}_{\alpha}(\hat{X}, \, \cdot \,)$.

\begin{lemma}\label{trans_con}
We have
\[ (d \mu^{c}_{\alpha} \circ \hat{I}^{\bar{\theta},c}_{\alpha})(\hat{X}) 
= G^{c}_{\alpha}(\hat{X},\hat{X})
= -A r^{\frac{2}{c}} \Omega_{\alpha} (\hat{X},\hat{I}^{\bar{\theta},c}_{\alpha}\hat{X})
                          +2 \varepsilon A r^{\frac{2}{c}} 
                          \langle \theta, \theta \rangle(\hat{X},\hat{X}).
\]
\end{lemma}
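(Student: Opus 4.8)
The statement to be proved is Lemma \ref{trans_con}, which is essentially a computation: we must evaluate $(d\mu^c_\alpha \circ \hat{I}^{\bar\theta,c}_\alpha)(\hat{X})$ and show it equals $G^c_\alpha(\hat{X},\hat{X})$, and then expand $G^c_\alpha(\hat{X},\hat{X})$ using the explicit formula (\ref{two_forms}). The first equality is immediate from the identity $d\mu^c_\alpha \circ \hat{I}^{\bar\theta,c}_\alpha = G^c_\alpha(\hat{X},\,\cdot\,)$, which was already established in the proof of Lemma \ref{cr_cond} (combining Lemma \ref{ext_der_theta} with Lemma \ref{ext_der_theta_cpx}). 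So the only content is the second equality, and for that one simply substitutes $Y=Z=\hat{X}$ into (\ref{two_forms}).

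The key observation needed to finish is that the $dr\otimes dr$ term in $G^c_\alpha$ contributes nothing when evaluated on $(\hat{X},\hat{X})$. This is because, under the standing assumptions of this section, we have identified $\hat{M}=S\times\mathbb{R}^{>0}$ via a section $\nu\in\Gamma(S_0)$ with $L_X\nu=0$, and Lemma \ref{triviality} then tells us that $\hat{X}$ is tangent to the $S$-factor; hence $dr(\hat{X})=0$. Therefore $\frac{2\varepsilon A}{c^2} r^{\frac{2}{c}-2}(dr\otimes dr)(\hat{X},\hat{X})=0$, and only the first two terms of (\ref{two_forms}) survive, giving exactly the claimed right-hand side.

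**Main obstacle.** There is essentially no obstacle here: the lemma is a direct corollary of the identity $d\mu^c_\alpha \circ \hat{I}^{\bar\theta,c}_\alpha = G^c_\alpha(\hat{X},\,\cdot\,)$ already derived in the proof of Lemma \ref{cr_cond}, together with the definition (\ref{two_forms}) and the fact that $dr(\hat{X})=0$. The only point requiring a word of justification is the vanishing of the $dr$-term, which follows from Lemma \ref{triviality} under the standing assumption that such a $\nu$ exists. One should also note that no affineness hypothesis on $X$ is actually needed for this particular identity, since it does not invoke Lemma \ref{ext_der_theta_cpx} (the passage $d\mu^c_\alpha = -\iota_{\hat X}d\hat\theta^c_\alpha$); rather, the statement $d\mu^c_\alpha\circ\hat I^{\bar\theta,c}_\alpha = G^c_\alpha(\hat X,\,\cdot\,)$ as used in Lemma \ref{cr_cond} does use it, so in practice one simply cites that lemma's proof. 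I would write the proof as a two-line argument: invoke the identity from the proof of Lemma \ref{cr_cond}, then expand via (\ref{two_forms}) and use $dr(\hat X)=0$.
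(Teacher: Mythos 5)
Your proof is correct and follows the paper's own (very terse) argument: the paper simply states that the lemma follows from the identity $d\mu^{c}_{\alpha}\circ\hat{I}^{\bar{\theta},c}_{\alpha} = G^{c}_{\alpha}(\hat{X},\,\cdot\,)$ already established in the discussion of Lemma \ref{cr_cond}. You additionally make explicit the point the paper leaves implicit, namely that the $dr\otimes dr$ term of (\ref{two_forms}) drops out because $dr(\hat{X})=0$ under the standing identification $\hat{M}=S\times\mathbb{R}^{>0}$ with $\hat{X}$ tangent to $S$ (Lemma \ref{triviality}).
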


By Lemma \ref{cr_cond}, 
$\mu^{c}$ satisfies CR equations (\ref{moment_cr}) if and only if
\[ \Omega_{1}(\hat{X},\hat{I}^{\bar{\theta},c}_{1}Y)
=\Omega_{2}(\hat{X},\hat{I}^{\bar{\theta},c}_{2}Y)
=\Omega_{3}(\hat{X},\hat{I}^{\bar{\theta},c}_{3}Y) \]
holds for all horizontal vector $Y$. 
On the other hand, from the equations (\ref{defB}) and (\ref{defOmega}), 
$\Omega_{\alpha}$ satisfies
\begin{eqnarray}\label{Omega_Ricc}
     2 \varepsilon \Omega_{\alpha}(Y^{h},\hat{I}^{\bar{\theta},c}_{\alpha}Z^{h})
&=&\Omega^{U}_{\alpha}(Y,I_{\alpha}Z) \\
&=&\frac{1}{2(n+1)} \left( (Ric^{\nabla})^{a}(I_{\alpha}Y,I_{\alpha}Z)
                                   -(Ric^{\nabla})^{a}(Y,Z) \right) \nonumber \\
& &-\frac{1}{2n} \left( (Ric^{\nabla})^{s}(I_{\alpha}Y,I_{\alpha}Z)
                                   +(Ric^{\nabla})^{s}(Y,Z) \right) \nonumber\\                            
& &+\frac{2}{n(n+2)} (\Pi_{h} (Ric^{\nabla})^{s} )(Y,Z) \nonumber
\end{eqnarray}
for tangent vectors $Y$ and $Z$ on $M$. 
In particular, if $Ric^{\nabla}$ is $Q$-hermitian, then  
$\Omega_{\alpha}(\, \cdot \,, \hat{I}^{\bar{\theta},c}_{\alpha} \, \cdot \,  )$
does not depend on $\alpha$. 
We see that 
$\mu^{c}$ satisfies the CR equations (\ref{moment_cr}) if  
$Ric^{\nabla}(X,Y)=Ric^{\nabla}(IX,IY)$
for all $Y \in TM$ and $I \in {\cal Z}$. 
Moreover if the vector field $X$ on $M$ is affine quaternionic 
and $\mu^{c}$ satisfies the CR equations, then 
\[ \Omega_{\alpha} (\hat{X},\hat{I}^{\bar{\theta},c}_{\alpha}\hat{X})
(=\Omega_{\beta} (\hat{X},\hat{I}^{\bar{\theta},c}_{\beta}\hat{X})
=\Omega_{\gamma} (\hat{X},\hat{I}^{\bar{\theta},c}_{\gamma}\hat{X}))
=-\frac{\varepsilon }{2(n+2)} (Ric^{\nabla})(X,X) \circ \hat{\pi}. \]
The following statements can be obtained for 
the CR equations and the transversality condition.

\begin{proposition}\label{moment_map}
Let $M$ be a quaternionic manifold with a quaternionic connection $\nabla$ and 
$X$ an affine quaternionic vector field on $(M,Q,\nabla)$. 
Assume that there exists $\nu \in \Gamma(S_{0})$ such that $L_{X} \nu=0$.
If 
\[ Ric^{\nabla}(X,Y)=Ric^{\nabla}(IX,IY) \]
for all $Y \in TM$, $I \in {\cal Z}$ and 
\[ (Ric^{\nabla})(X, X) \circ \hat{\pi}
+4(n+2) \langle \theta, \theta \rangle (\hat{X}, \hat{X})  \] 
does not vanish on $\hat{M}$,
then the map $\mu^{c}=A r^{\frac{2}{c}} \theta_{\alpha} : \hat{M} \to {\mathbb R}^{3}$ 
$(A \neq 0)$ satisfies the CR equations 
and the transversality condition for any $c \neq 0$. 
\end{proposition}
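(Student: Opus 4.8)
The plan is to verify the two defining properties of a hypercomplex moment map---the CR equations \eqref{moment_cr} and the transversality condition \eqref{moment_trans}---by reducing both to the explicit formulas collected in Lemmas~\ref{ext_der_theta}, \ref{ext_der_theta_cpx}, \ref{cr_cond} and \ref{trans_con}, under the two running hypotheses ($Q$-hermiticity of $Ric^{\nabla}$ along $X$, and non-vanishing of the stated scalar on $\hat M$). Since $X$ is affine quaternionic, Lemma~\ref{ext_der_theta_cpx} gives $d\mu^c_\alpha=-\iota_{\hat X}d\hat\theta^c_\alpha$, and combining with Lemma~\ref{ext_der_theta} yields the master identity $d\mu^c_\alpha\circ\hat I^{\bar\theta,c}_\alpha=G^c_\alpha(\hat X,\,\cdot\,)$ that underlies everything.

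First I would establish the CR equations. By Lemma~\ref{cr_cond}, $d\mu^c_\alpha\circ\hat I^{\bar\theta,c}_\alpha$ vanishes on $Z^c_0$, is independent of $\alpha$ on the $\mathfrak{so}(3)$-vertical directions $\widetilde B$ (because $G^c_1=G^c_2=G^c_3$ on $\bar{\mathcal V}\times T\hat M$), and on horizontal vectors $Y$ equals $-Ar^{2/c}\Omega_\alpha(\hat X,\hat I^{\bar\theta,c}_\alpha Y)$. So it remains to check $\alpha$-independence of $\Omega_\alpha(\hat X,\hat I^{\bar\theta,c}_\alpha Y)$ on horizontal $Y$. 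Writing $\hat X=(X^h$-part$)+$(vertical part) and using \eqref{Omega_Ricc}, the horizontal-horizontal component of $\Omega_\alpha(\,\cdot\,,\hat I^{\bar\theta,c}_\alpha\,\cdot\,)$ is expressed through $(Ric^{\nabla})^a$, $(Ric^{\nabla})^s$ and $\Pi_h(Ric^{\nabla})^s$; the $Q$-hermiticity hypothesis $Ric^{\nabla}(X,Y)=Ric^{\nabla}(IX,IY)$ forces the $\alpha$-dependent pieces (the terms with $I_\alpha Y,I_\alpha Z$ inside) to collapse, leaving an expression independent of $\alpha$. The vertical part of $\hat X$ contributes only through $\langle\theta,\theta\rangle$-type terms in $G^c_\alpha$, which are already $\alpha$-independent. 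This gives \eqref{moment_cr}.

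Next I would handle transversality. By Lemma~\ref{trans_con}, $(d\mu^c_\alpha\circ\hat I^{\bar\theta,c}_\alpha)(\hat X)=-Ar^{2/c}\Omega_\alpha(\hat X,\hat I^{\bar\theta,c}_\alpha\hat X)+2\varepsilon Ar^{2/c}\langle\theta,\theta\rangle(\hat X,\hat X)$. Under the CR hypothesis, the displayed formula just before the Proposition gives $\Omega_\alpha(\hat X,\hat I^{\bar\theta,c}_\alpha\hat X)=-\frac{\varepsilon}{2(n+2)}Ric^{\nabla}(X,X)\circ\hat\pi$, so the whole quantity becomes $\frac{\varepsilon A}{2(n+2)}r^{2/c}\bigl((Ric^{\nabla})(X,X)\circ\hat\pi+4(n+2)\langle\theta,\theta\rangle(\hat X,\hat X)\bigr)$; since $A\neq0$, $r^{2/c}>0$ and the bracketed factor is assumed non-vanishing, $(d\mu^c_1\circ\hat I^{\bar\theta,c}_1)(\hat X)\neq0$ everywhere, which is exactly \eqref{moment_trans} for the one-dimensional group $\mathrm U(1)$ (every nonzero $f\in\mathfrak F$ is a nonzero multiple of the generator of $\hat X$). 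I would also note that $F$-equivariance of $\mu^c$ here reduces to $\mathrm U(1)$-invariance, which follows from Lemma~\ref{invariant} together with $L_{\hat X}\theta=0$ (Lemma~\ref{L_X}) and $L_{\hat X}\hat\theta^c_\alpha=0$ (proof of Lemma~\ref{ext_der_theta_cpx}).

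The main obstacle is the CR step: one must carefully track how the vertical component of $\hat X$ interacts with $\hat I^{\bar\theta,c}_\alpha$ (which rotates vertical directions among $Z_0^c,Z_1,Z_2,Z_3$), and confirm that after applying \eqref{Omega_Ricc} and the $Q$-hermiticity assumption the residual expression is genuinely $\alpha$-independent on \emph{all} of $\bar{\mathcal H}$, not merely on the image of $X^h$. Everything else is bookkeeping with the formulas already proved. I would also remark that the hypotheses are stated so that $Ric^{\nabla}$ need not be fully $Q$-hermitian---only $Q$-hermitian in the $X$-direction---which is why the CR equations hold even though $\Omega_\alpha(\,\cdot\,,\hat I^{\bar\theta,c}_\alpha\,\cdot\,)$ may still depend on $\alpha$ as a bilinear form on general horizontal pairs.
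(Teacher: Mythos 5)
Your proposal is correct and follows essentially the same route as the paper, which leaves this proposition as an immediate consequence of Lemma \ref{cr_cond}, formula (\ref{Omega_Ricc}), Lemma \ref{trans_con} and the displayed identity $\Omega_{\alpha}(\hat{X},\hat{I}^{\bar{\theta},c}_{\alpha}\hat{X})=-\frac{\varepsilon}{2(n+2)}(Ric^{\nabla})(X,X)\circ\hat{\pi}$ preceding the statement. Your reduction of the CR equations to the $\alpha$-independence of $\Omega_{\alpha}(\hat{X},\hat{I}^{\bar{\theta},c}_{\alpha}Y)$ on horizontal $Y$ via the hypothesis on $Ric^{\nabla}$ in the $X$-direction, and the factorization of $(d\mu^{c}_{\alpha}\circ\hat{I}^{\bar{\theta},c}_{\alpha})(\hat{X})$ as $\frac{\varepsilon A}{2(n+2)}r^{2/c}$ times the assumed non-vanishing quantity, are exactly the intended argument.
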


Also we have

\begin{corollary}\label{moment_map_cor}
Let $M$ be a quaternionic manifold with a quaternionic connection $\nabla$ and 
$X$ an affine quaternionic vector field on $(M,Q,\nabla)$. 
Assume that there exists $\nu \in \Gamma(S_{0})$ such that $L_{X} \nu=0$.
If $Ric^{\nabla}$ is $Q$-hermitian and 
\[  (Ric^{\nabla})(X, X) \circ \hat{\pi} 
+4(n+2) \langle \theta, \theta \rangle (\hat{X}, \hat{X}) \] 
does not vanish on $\hat{M}$, 
then the map $\mu^{c}=A r^{\frac{2}{c}} \theta_{\alpha} : \hat{M} \to {\mathbb R}^{3}$ 
$(A \neq 0)$ satisfies the CR equations 
and the transversality condition for any $c \neq 0$. 
\end{corollary}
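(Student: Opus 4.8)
The plan is to derive Corollary \ref{moment_map_cor} as an immediate specialization of Proposition \ref{moment_map}. The key observation is that the $Q$-hermitian condition on the full Ricci tensor $Ric^\nabla$ is strictly stronger than the two hypotheses of Proposition \ref{moment_map}. Indeed, if $Ric^\nabla(U,V) = Ric^\nabla(IU,IV)$ for all $U,V\in TM$ and all $I\in\mathcal{Z}$, then in particular the identity holds when $U=X$, which is precisely the first hypothesis $Ric^\nabla(X,Y)=Ric^\nabla(IX,IY)$ of Proposition \ref{moment_map}. The second hypothesis of Proposition \ref{moment_map} — non-vanishing of $(Ric^\nabla)(X,X)\circ\hat\pi + 4(n+2)\langle\theta,\theta\rangle(\hat X,\hat X)$ on $\hat M$ — is literally identical to the non-vanishing hypothesis of the corollary. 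Hence both hypotheses of Proposition \ref{moment_map} are satisfied, and the conclusion — that $\mu^c = A r^{2/c}\theta_\alpha$ satisfies the CR equations and the transversality condition for any $c\neq 0$ — follows verbatim.

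Concretely, I would write: \emph{Since $Ric^\nabla$ is $Q$-hermitian, in particular $Ric^\nabla(X,Y) = Ric^\nabla(IX,IY)$ for all $Y\in TM$ and all $I\in\mathcal{Z}$, so the first hypothesis of Proposition \ref{moment_map} holds. The second hypothesis is identical to the remaining assumption of the corollary. Therefore the conclusion follows from Proposition \ref{moment_map}.} This is a two-line argument; there is essentially no calculational content beyond noting the logical implication between the hypotheses.

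The only point worth a moment's care — and the closest thing to an obstacle — is making sure the notion of $Q$-hermitian here is exactly ``$Ric^\nabla(U,V) = Ric^\nabla(IU,IV)$ for all $U,V$ and all $I\in\mathcal{Z}$'' (as used, e.g., in the discussion around equation (\ref{Omega_Ricc}) and in Theorem \ref{hypercomlex_cone} for $(Ric^\nabla)^a$), rather than some weaker partial-hermiticity condition; once that is fixed, restricting one slot to $X$ is immediate. One might also remark, for the reader's orientation, that when $Ric^\nabla$ is $Q$-hermitian the form $\Omega_\alpha(\,\cdot\,,\hat I^{\bar\theta,c}_\alpha\,\cdot\,)$ is independent of $\alpha$ (as already noted after (\ref{Omega_Ricc})), which is why the CR equations hold in the cleanest possible way; but this is explanatory rather than logically necessary for the proof. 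I expect no real difficulty here: the corollary is packaged precisely so that it reads off from the proposition.
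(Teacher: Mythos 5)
Your proposal is correct and matches the paper's (implicit) reasoning: the corollary is stated immediately after Proposition \ref{moment_map} with no separate proof, precisely because the $Q$-hermitian hypothesis on $Ric^{\nabla}$ restricts to the condition $Ric^{\nabla}(X,Y)=Ric^{\nabla}(IX,IY)$ of the proposition, while the non-vanishing hypothesis is identical. Nothing further is needed.
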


\section{The proof of the main result}
\setcounter{equation}{0}

In this section, we give the proof of our main result. 
Using the hypercomplex quotient in \cite{J}, 
we can obtain a hypercomplex manifold $M^{\prime}$ with certain properties. 
To show it, the following lemmas are needed.



\begin{lemma}\label{Lie_exp1}
We have $L_{\widetilde{B}} \theta=-\varepsilon [B, \theta]$. 
Moreover 
$L_{Z_{\alpha}} \theta_{\alpha}=0$, $L_{Z_{\alpha}} \theta_{\beta}=2 \varepsilon \theta_{\gamma}$ and 
$L_{Z_{\alpha}} \theta_{\gamma}=-2 \varepsilon \theta_{\beta}$. 
\end{lemma}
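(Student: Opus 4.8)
The plan is to compute $L_{\widetilde{B}}\theta$ directly from the defining property of a principal connection form, and then specialize to $B = e_\alpha$ to obtain the three concrete formulas. First I would recall that $\theta$ is a principal connection form on the ${\rm SO}(3)$-bundle $S \to M$, so it satisfies the two characterizing properties: $\theta(\widetilde{A}) = A$ for all $A \in \mathfrak{so}(3)$, and equivariance $\tau(\cdot,g)^\ast \theta = \mathrm{Ad}_{g^{-\varepsilon}}\theta$ (the $\varepsilon$ accounting for the right/left-principal convention fixed in Section 3). Differentiating the equivariance relation along the flow of $\widetilde{B}$ at $g = \exp(tB)$ gives the infinitesimal version $L_{\widetilde{B}}\theta = -\varepsilon\,\mathrm{ad}_B\theta = -\varepsilon[B,\theta]$, which is exactly the first claimed identity. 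Alternatively one can argue via Cartan's formula $L_{\widetilde B}\theta = \iota_{\widetilde B}d\theta + d\iota_{\widetilde B}\theta$: since $\iota_{\widetilde B}\theta = B$ is constant the second term vanishes, and using the structure equation $d\theta = -\tfrac12[\theta,\theta] + \Omega$ together with horizontality of $\Omega$ (so $\iota_{\widetilde B}\Omega = 0$) one gets $\iota_{\widetilde B}d\theta = -[\,\iota_{\widetilde B}\theta,\theta\,] = -[B,\theta]$; the extra $\varepsilon$ is again bookkeeping for the action convention, consistent with $[Z_\alpha,Z_\beta] = 2\varepsilon Z_\gamma$ noted earlier.

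Next I would extract the component statements. Writing $\theta = \sum_\delta \theta^\delta e_\delta$ and $B = e_\alpha$, the first identity reads $L_{Z_\alpha}\theta = -\varepsilon\sum_\delta \theta^\delta [e_\alpha, e_\delta]$. Using the bracket normalization $[e_\alpha,e_\beta] = 2e_\gamma$ fixed in Section 3 (so $[e_\alpha,e_\alpha]=0$, $[e_\alpha,e_\beta]=2e_\gamma$, $[e_\alpha,e_\gamma]=-2e_\beta$), this gives $L_{Z_\alpha}\theta = -\varepsilon(2\theta^\beta e_\gamma - 2\theta^\gamma e_\beta) = -2\varepsilon\theta^\beta e_\gamma + 2\varepsilon\theta^\gamma e_\beta$. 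Comparing the $e_\delta$-components yields $L_{Z_\alpha}\theta^\alpha = 0$, $L_{Z_\alpha}\theta^\gamma = -2\varepsilon\theta^\beta$, and $L_{Z_\alpha}\theta^\beta = 2\varepsilon\theta^\gamma$, which is precisely the second set of claimed equations (after the obvious cyclic relabeling to match the stated $L_{Z_\alpha}\theta_\beta = 2\varepsilon\theta_\gamma$, $L_{Z_\alpha}\theta_\gamma = -2\varepsilon\theta_\beta$).

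I would not expect any serious obstacle here; the lemma is essentially a restatement of equivariance of a connection form in infinitesimal form, and the only thing to be careful about is sign consistency — the placement of $\varepsilon$ and the sign conventions in $[e_\alpha,e_\beta]=2e_\gamma$ versus the desired output. The one genuinely substantive point is making sure the computation is done on $S$ (where $\theta$ lives), not on $\hat M$, and that the fundamental vector field $\widetilde B = Z_\alpha$ refers to the ${\rm SO}(3)$-action on $S$; but since this lemma is a purely structural fact about the ${\rm SO}(3)$-bundle $S$, independent of the quaternionic geometry, the verification is short. If one prefers to avoid invoking equivariance abstractly, the Cartan-formula route sketched above is fully self-contained given the structure equation and the horizontality of the curvature, which were already recalled in the text.
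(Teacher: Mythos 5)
Your proof is correct and in substance the same as the paper's: the paper establishes $L_{\widetilde{B}}\theta=-\varepsilon[B,\theta]$ by evaluating on fundamental vector fields and horizontal lifts (i.e.\ verifying infinitesimal equivariance directly via $(L_{\widetilde{B}}\theta)(\widetilde{C})=-\theta([\widetilde{B},\widetilde{C}])=-\varepsilon[B,C]$ and $(L_{\widetilde{B}}\theta)(Y^{h})=0$), while you obtain the identical identity by differentiating the finite equivariance relation $\tau(\cdot,g)^{\ast}\theta=\mathrm{Ad}_{g^{-\varepsilon}}\theta$, and your component extraction from $[e_{\alpha},e_{\beta}]=2e_{\gamma}$ coincides with the paper's computation. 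Both are one-line consequences of the defining properties of a principal connection, and your handling of the $\varepsilon$-convention is consistent with $[Z_{\alpha},Z_{\beta}]=2\varepsilon Z_{\gamma}$, so there is no gap.
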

\begin{proof}
We see that 
$(L_{\widetilde{B}} \theta)(\widetilde{C})
=-\theta ([\widetilde{B},\widetilde{C}])
=-\varepsilon[B,C]=-\varepsilon[B, \theta(\widetilde{C})]$
and
$(L_{\widetilde{B}} \theta)(Y^{h})
=-\theta ([\widetilde{B},Y^{h}])=0(=-[B,\theta(Y^{h}) ])$. 
For the latter statements, we compute
\[  \sum (L_{Z_{\alpha}} \theta_{i}) e_{i} = L_{Z_{\alpha}} \theta 
  = - \varepsilon [e_{\alpha},\sum \theta_{i} e_{i} ] 
  = - \varepsilon [e_{\alpha}, \theta_{\beta} e_{\beta} ]
  -\varepsilon [e_{\alpha}, \theta_{\gamma} e_{\gamma} ]
  = -2 \varepsilon \theta_{\beta} e_{\gamma} + 2 \varepsilon \theta_{\gamma} e_{\beta}. 
\]
\end{proof}



By Lemma \ref{Lie_exp1}, we have

\begin{lemma}\label{Lie_deri_hat_theta}
It holds that 
$L_{Z_{\alpha}} \hat{\theta}^{c}_{\alpha}=0$, 
$L_{Z_{\alpha}} \hat{\theta}^{c}_{\beta}=2\varepsilon \hat{\theta}^{c}_{\gamma}$ 
and 
$L_{Z_{\alpha}} \hat{\theta}^{c}_{\gamma}=-2\varepsilon \hat{\theta}^{c}_{\beta}$.
\end{lemma}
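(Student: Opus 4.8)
The plan is to deduce Lemma \ref{Lie_deri_hat_theta} from Lemma \ref{Lie_exp1} by differentiating the defining formula for $\hat{\theta}^{c}_{\alpha}$ along the fundamental vector fields $Z_{\alpha}$. Recall that $\hat{\theta}^{c}_{\alpha}$ is determined by $\hat{\theta}^{c}_{\alpha}|_{TS} = A r^{2/c}\,\theta_{\alpha}$ and $\hat{\theta}^{c}_{\alpha}(Z^{c}_{0}) = 0$, so it suffices to compute the Lie derivatives of these two pieces of data separately and check that the resulting one-forms agree with the claimed right-hand sides on both $TS$ and on $Z^{c}_{0}$ (hence on all of $T\hat{M} = TS \oplus \mathbb{R}Z^{c}_{0}$).

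First I would observe that $Z_{\alpha}$ is tangent to $S$ (it is a fundamental vector field for the $\mathrm{SO}(3)$-action, which fixes the $\mathbb{R}^{>0}$-factor) and that $Z_{\alpha}$ commutes with $Z^{c}_{0} = c\,\widetilde{e}_{0}$, since $[e_{\alpha},e_{0}]=0$; this already gives $(L_{Z_{\alpha}}\hat{\theta}^{c}_{\delta})(Z^{c}_{0}) = Z_{\alpha}(\hat{\theta}^{c}_{\delta}(Z^{c}_{0})) - \hat{\theta}^{c}_{\delta}([Z_{\alpha},Z^{c}_{0}]) = 0$ for each $\delta$, which matches the claimed formulas evaluated on $Z^{c}_{0}$. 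Next, on $TS$, since the scalar function $r$ is $\mathrm{SO}(3)$-invariant we have $Z_{\alpha}(r^{2/c}) = 0$, so $L_{Z_{\alpha}}(A r^{2/c}\theta_{\delta}) = A r^{2/c}\, L_{Z_{\alpha}}\theta_{\delta}$. Now I apply Lemma \ref{Lie_exp1}: for $\delta = \alpha$ this is $0$, for $\delta = \beta$ it is $2\varepsilon\, A r^{2/c}\theta_{\gamma} = 2\varepsilon\,\hat{\theta}^{c}_{\gamma}|_{TS}$, and for $\delta = \gamma$ it is $-2\varepsilon\,A r^{2/c}\theta_{\beta} = -2\varepsilon\,\hat{\theta}^{c}_{\beta}|_{TS}$. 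Combining the $TS$-computation with the $Z^{c}_{0}$-computation gives the three asserted identities.

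I do not expect a serious obstacle here; the only points requiring a little care are (i) making sure the defining data of $\hat{\theta}^{c}_{\alpha}$ genuinely splits $T\hat{M}$ as $TS \oplus \mathbb{R}Z^{c}_{0}$ so that checking on these two pieces suffices, and (ii) confirming that $L_{Z_{\alpha}}$ commutes with restriction to $TS$, which holds because the flow of $Z_{\alpha}$ preserves $S$. The content of the lemma is essentially that the $\mathrm{SO}(3)$-equivariance of the connection form $\theta$ established in Lemma \ref{Lie_exp1} is inherited by the rescaled forms $\hat{\theta}^{c}_{\alpha}$, the rescaling factor $A r^{2/c}$ being invariant under the relevant action.
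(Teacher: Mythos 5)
Your proposal is correct and follows exactly the route the paper intends: the paper derives this lemma directly from Lemma \ref{Lie_exp1} (stating only ``By Lemma \ref{Lie_exp1}, we have\dots''), and your argument supplies precisely the omitted details, namely the $\mathrm{SO}(3)$-invariance of $r^{2/c}$, the vanishing of $[Z_\alpha,Z_0^c]$, and the splitting $T\hat{M}=TS\oplus\mathbb{R}Z_0^c$.
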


From Lemma \ref{L_X}, it holds

\begin{lemma}\label{Lie_der_Theta}
If $L_{X} \nabla=0$, then $L_{\hat{X}} d \hat{\theta}^{c}_{\alpha}=0$ 
for $\alpha=1,2,3$. 
\end{lemma}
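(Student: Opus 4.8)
The plan is to deduce $L_{\hat X}\,d\hat\theta^c_\alpha = 0$ from $L_{\hat X}\hat\theta^c_\alpha = 0$, which was already established in the course of proving Lemma \ref{ext_der_theta_cpx}. Indeed, recall from that lemma's proof that under the hypothesis $L_X\nabla = 0$ one has $L_{\hat X}\theta_\alpha = 0$ and $[\hat X, Z_0^c] = 0$, and these together gave $L_{\hat X}\hat\theta^c_\alpha = 0$; in fact $\nabla\nu=0$ may be arranged by Corollary \ref{ex_affine_con} together with the standing assumption that $X$ generates a free $\mathrm{U}(1)$-action, so the relevant identifications and the invariance of $\theta$ are in force.

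First I would note the general identity $d\circ L_{\hat X} = L_{\hat X}\circ d$ (the exterior derivative commutes with the Lie derivative along any vector field). Applying this to the $1$-form $\hat\theta^c_\alpha$ gives
\[
L_{\hat X}\, d\hat\theta^c_\alpha = d\big(L_{\hat X}\hat\theta^c_\alpha\big).
\]
Since $L_{\hat X}\hat\theta^c_\alpha = 0$ by (the proof of) Lemma \ref{ext_der_theta_cpx}, the right-hand side vanishes, and hence $L_{\hat X}\,d\hat\theta^c_\alpha = 0$ for $\alpha = 1,2,3$, as claimed.

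There is essentially no obstacle here: the only subtlety is making sure the hypotheses line up, i.e.\ that $L_X\nabla = 0$ is exactly what was used to obtain $L_{\hat X}\hat\theta^c_\alpha = 0$ in Lemma \ref{ext_der_theta_cpx}, which it is. Alternatively — and this is worth recording as it may be the more useful form later — one can instead argue via Lemma \ref{ext_der_theta}, which expresses $d\hat\theta^c_\alpha(Y,Z) = G^c_\alpha(Y,\hat I^{\bar\theta,c}_\alpha Z)$; since $L_{\hat X}\hat I^{\bar\theta,c}_\alpha = 0$ by Lemma \ref{horizontal} and since $L_{\hat X}$ annihilates each building block of $G^c_\alpha$ (the curvature $\Omega_\alpha$, because $L_{\hat X}\bar\theta = 0$; the tensor $\langle\theta,\theta\rangle$, because $L_{\hat X}\theta = 0$; and $dr$, because $\hat X$ is tangent to $S$ so $\hat X r = 0$), one concludes $L_{\hat X}G^c_\alpha = 0$ and therefore $L_{\hat X}\,d\hat\theta^c_\alpha = 0$. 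Either route is short; I would present the first one as the main proof for brevity.
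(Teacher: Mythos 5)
Your proposal is correct and follows the paper's intended argument: the paper derives the lemma directly from Lemma \ref{L_X} (which gives $L_{\hat X}\theta=0$, hence $L_{\hat X}\hat\theta^c_\alpha=0$ as recorded in the proof of Lemma \ref{ext_der_theta_cpx}), and then the commutation of $d$ with the Lie derivative finishes it, exactly as in your first route.
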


We have the main theorems in this paper.

\begin{theorem}\label{qh_correspond}
Let $(M,Q)$ be a quaternionic manifold.  
We assume that $Q$ is anti-self-dual when $n=1$.
Moreover assume that ${\rm U}(1)$ acts freely on $M$ preserving $Q$. 
We denote by $X$ the vector field generating the ${\rm U}(1)$-action. 
If $Q$ admits a quaternionic connection $\nabla$
such that $L_{X} \nabla=0$, 
\begin{eqnarray}\label{cr_condition}
Ric^{\nabla}(X,Y)=Ric^{\nabla}(IX,IY) 
\end{eqnarray}
for all $Y \in TM$, $I \in {\cal Z}$ and 
\begin{eqnarray}\label{positive_condition}
(Ric^{\nabla})(X, X) \circ \hat{\pi}
+4(n+2) \langle \theta, \theta \rangle (\hat{X}, \hat{X}) 
\end{eqnarray}
does not vanish on $\hat{M}$, 
then the natural lift $\hat{X}$ generates a free ${\rm U}(1)$-action with 
the moment map $\mu^{c}$ defined by {\rm(}\ref{mm:eq}{\rm)}, 
where $c=-4(n+1)$.  
Then 
the corresponding hypercomplex quotient is 
a hypercomplex manifold 
$(M^{\prime},H=(I^{\prime}_{1},I^{\prime}_{2},I^{\prime}_{3}))$
with an $I^{\prime}_{1}$-holomorphic vector field 
$Z$ such that 
$L_{Z} I^{\prime}_{2}=2 \varepsilon I^{\prime}_{3}$, 
$L_{Z}  I^{\prime}_{3}=-2 \varepsilon I^{\prime}_{2}$. 
Moreover the exact $2$-forms $d \hat{\theta}^{c}_{\alpha}$ on $\hat{M}$ 
induce closed $2$-forms
$\Theta^{\prime}_{\alpha}$ on $M^{\prime}$ which satisfy 
$L_{Z} \Theta^{\prime}_{1}=0$, 
$L_{Z} \Theta^{\prime}_{2}=2 \varepsilon \Theta^{\prime}_{3}$, 
$L_{Z} \Theta^{\prime}_{3}=-2 \varepsilon \Theta^{\prime}_{2}$. 
\end{theorem}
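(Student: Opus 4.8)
The plan is to assemble the theorem from the pieces already in place: Theorem \ref{hypercomlex_cone} (integrability of the Swann-bundle hypercomplex structure for $c=-4(n+1)$), Corollary \ref{moment_map_cor} together with Proposition \ref{moment_map} (that $\mu^c$ satisfies the CR and transversality conditions), Lemma \ref{horizontal} and Lemma \ref{invariant} (invariance of the almost hypercomplex structure and $\bar\theta$ under $\hat X$), and Joyce's quotient construction \cite{J}. First I would note that since $\mathrm{U}(1)$ is compact and acts freely on $M$, Corollary \ref{ex_affine_con} and Lemma \ref{triviality} give a $\mathrm{U}(1)$-invariant $\nu\in\Gamma(S_0)$, so the standing hypothesis ``there exists $\nu$ with $L_X\nu=0$'' of Proposition \ref{moment_map}/Corollary \ref{moment_map_cor} is met; hypothesis (\ref{cr_condition}) is exactly the CR hypothesis and (\ref{positive_condition}) the transversality hypothesis there, so $\mu^c$ is a hypercomplex moment map for $\hat X$. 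One must check that $\hat X$ generates a \emph{free} $\mathrm{U}(1)$-action on $\hat M$: freeness of the $\mathrm{U}(1)$-action on $M$ lifts because $\hat\varphi_t$ covers $\varphi_t$, and the period is the same since $\hat\varphi_t$ is defined directly from $\varphi_t$ by pullback of $\rho$ and the $I_\alpha$; the transversality condition guarantees $\hat X$ is nowhere tangent to the fibres in the relevant sense, so the level set $(\mu^c)^{-1}(0)$ (or a regular value) is a smooth submanifold on which $\mathrm{U}(1)$ acts freely, and $M':=(\mu^c)^{-1}(0)/\mathrm{U}(1)$ is a hypercomplex manifold by \cite[Proposition 3.1]{J}, carrying $H=(I'_1,I'_2,I'_3)$.

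Next I would produce the vector field $Z$ and the $2$-forms $\Theta'_\alpha$. The idea is that the $\mathrm{SO}(3)$-part of the structure group of $\hat M$ still acts on $\hat M$ after restricting to the level set and passing to the quotient — more precisely, the subgroup generated by $Z_1$ descends. By Lemma \ref{invariant} the fields $Z_\alpha$ commute with $\hat X$, hence are $\mathrm{U}(1)$-invariant; one checks $Z_1$ is tangent to the zero level set of $\mu^c$ using Lemma \ref{Lie_deri_hat_theta}: $L_{Z_1}\hat\theta^c_1=0$ while $L_{Z_1}\hat\theta^c_2=2\varepsilon\hat\theta^c_3$, $L_{Z_1}\hat\theta^c_3=-2\varepsilon\hat\theta^c_2$, so the flow of $Z_1$ rotates the triple $(\mu^c_1,\mu^c_2,\mu^c_3)$ among themselves — in particular it preserves the common zero set $\{\mu^c_1=\mu^c_2=\mu^c_3=0\}$. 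Therefore $Z_1$ restricts to the level set and, being $\mathrm{U}(1)$-invariant, descends to a vector field $Z$ on $M'$. That $Z$ is $I'_1$-holomorphic and satisfies $L_ZI'_2=2\varepsilon I'_3$, $L_ZI'_3=-2\varepsilon I'_2$ follows from Lemma \ref{Lie_der_hyp_cpx}, namely $L_{Z_1}\hat I^{\bar\theta,c}_1=0$, $L_{Z_1}\hat I^{\bar\theta,c}_2=2\varepsilon\hat I^{\bar\theta,c}_3$, $L_{Z_1}\hat I^{\bar\theta,c}_3=-2\varepsilon\hat I^{\bar\theta,c}_2$, together with the compatibility of $(I'_\alpha)$ with $(\hat I^{\bar\theta,c}_\alpha)$ under the quotient map (holomorphicity of $Z$ is $L_Z I'_1=0$). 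For the forms, $d\hat\theta^c_\alpha$ is $\hat X$-invariant (Lemma \ref{Lie_der_Theta}, using $L_X\nabla=0$) and horizontal with respect to the $\mathrm{U}(1)$-action — here I would verify $\iota_{\hat X}d\hat\theta^c_\alpha=-d\mu^c_\alpha$ vanishes on the level set by Lemma \ref{ext_der_theta_cpx} — so it is basic and descends to a closed $2$-form $\Theta'_\alpha$ on $M'$. The transformation rules $L_Z\Theta'_1=0$, $L_Z\Theta'_2=2\varepsilon\Theta'_3$, $L_Z\Theta'_3=-2\varepsilon\Theta'_2$ then descend directly from $L_{Z_1}d\hat\theta^c_1=0$, $L_{Z_1}d\hat\theta^c_2=2\varepsilon\, d\hat\theta^c_3$, $L_{Z_1}d\hat\theta^c_3=-2\varepsilon\, d\hat\theta^c_2$, which are obtained by applying $d$ to Lemma \ref{Lie_deri_hat_theta}.

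The main obstacle I anticipate is the bookkeeping around the reduction: one must be careful that $Z_1$ is genuinely tangent to $(\mu^c)^{-1}(0)$ and commutes appropriately so that it descends, that the quotient $\mathrm{U}(1)$ (generated by $\hat X$) and the residual $\mathrm{SO}(3)$-action (the flow of $Z_1$) do not interfere — i.e.\ $[\hat X,Z_1]=0$ from Lemma \ref{invariant} is what makes this work — and that all the ``descends to the quotient'' claims are justified by checking the relevant objects are horizontal and invariant for the $\hat X$-action (basic forms, projectable vector fields, projectable tensors). None of these steps is deep; the content is entirely in the lemmas already proved, and the proof is essentially a careful invocation of \cite[Proposition 3.1]{J} followed by tracking the $Z_1$-equivariance through the quotient. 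I would also remark that the choice $c=-4(n+1)$ is forced only to make the hypercomplex structure on $\hat M$ integrable and connection-independent (Proposition \ref{dependence}, Theorem \ref{hypercomlex_cone}); everything else works for any $c\neq 0$.
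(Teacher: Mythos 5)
Your overall route is exactly the paper's: integrability of the Swann-bundle structure from Theorem \ref{hypercomlex_cone} with $c=-4(n+1)$, the CR and transversality conditions from Proposition \ref{moment_map} (with the invariant $\nu$ supplied by compactness of $\mathrm{U}(1)$ via Corollary \ref{ex_affine_con} and Lemma \ref{triviality}), Joyce's quotient, and then descending $Z_1$ and $d\hat{\theta}^{c}_{\alpha}$ by means of Lemmas \ref{invariant}, \ref{Lie_der_hyp_cpx}, \ref{Lie_deri_hat_theta} and \ref{Lie_der_Theta}.

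The one concrete point where your argument goes wrong is the choice of level set. You reduce at $(\mu^{c})^{-1}(0)$, but $\mu^{c}=A r^{2/c}\,\theta(\hat{X})$ vanishes exactly where $\hat{X}$ is horizontal (recall $\hat{X}$ has been arranged to be tangent to $S$), and nothing in the hypotheses forces this locus to be nonempty. In the paper's principal example (the quaternionic Hopf manifold) one has $\hat{X}_{s(x)}=X^{h}_{s(x)}+(\widetilde{e}_{1})_{s(x)}$, so $\theta(\hat{X})$ is nowhere zero and your level set is empty, rendering the construction vacuous precisely where it is meant to be applied. The paper instead reduces at $P=(\mu^{c})^{-1}((1,0,0))=(\mu^{c})^{-1}(e_{1})$. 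Your tangency argument for $Z_{1}$ survives this change verbatim --- $Z_{1}\mu^{c}_{1}=0$ while $Z_{1}\mu^{c}_{2}=2\varepsilon\mu^{c}_{3}$ and $Z_{1}\mu^{c}_{3}=-2\varepsilon\mu^{c}_{2}$ vanish on any level of the form $(t,0,0)$ --- and it is the choice of a level proportional to $e_{1}$, rather than a generic regular value, that singles out $I^{\prime}_{1}$ so that only $Z_{1}$ (not $Z_{2},Z_{3}$) descends. Everything else matches the paper's proof: your remark that $\iota_{\hat{X}}d\hat{\theta}^{c}_{\alpha}=-d\mu^{c}_{\alpha}$ annihilates $TP$ is exactly what makes $d\hat{\theta}^{c}_{\alpha}|_{P}$ basic, and the ``compatibility of $I^{\prime}_{\alpha}$ with $\hat{I}^{\bar{\theta},c}_{\alpha}$'' that you leave implicit is made precise in the paper via the projection onto $V$ in the $\hat{I}^{\bar{\theta},c}_{\alpha}$-invariant splitting $(T\hat{M})|_{P}=V\oplus\langle\hat{X},\hat{I}^{\bar{\theta},c}_{1}\hat{X},\hat{I}^{\bar{\theta},c}_{2}\hat{X},\hat{I}^{\bar{\theta},c}_{3}\hat{X}\rangle$; you should spell that step out, but it is in the spirit of what you wrote.
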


\begin{proof}
Choose $c=-4 (n+1)$. Then $\hat{M}$ is a hypercomplex manifold 
by Theorem \ref{hypercomlex_cone}. 
We can choose a ${\rm U}(1)$-invariant volume form $\nu$ on $M$. 
Then the condition (2) in Lemma \ref{triviality} holds, so 
$\hat{X}$ is tangent to $S$, which means that the results in the previous section 
can be applied. 
Since Proposition \ref{moment_map} and the second statement of 
Lemma \ref{horizontal} hold, 
$M^{\prime}=P/{\rm U}(1)$ is a hypercomplex manifold 
with the induced hypercomplex structure 
$I^{\prime}_{1}$, $I^{\prime}_{2}$, $I^{\prime}_{3}$
by \cite[Proposition 3.1]{J}, where 
$P$ is the level set $(\mu^{c})^{-1}((1,0,0))$. 
Based on the proof of \cite[Proposition 3.1]{J}, take 
$V=\{ v \in TP \mid 
(d \mu^{c}_{\alpha} \circ \hat{I}^{\hat{\theta},c}_{\alpha})(v)=0 \}$. 
Then we see that $TP=V \oplus \langle \hat{X} \rangle$.
In particular, $\pi_{P}^{\ast} T M^{\prime} \cong V$, 
where $\pi_{P}:P \to M^{\prime}$ is 
the quotient map. 
The vector field 
$\hat{I}^{\hat{\theta},c}_{1} Z_{0}^{c}=Z_{1}$ is tangent to $P$, 
since 
\[ Z_{1} \mu^{c}_{\alpha} |_{P} 
= (2 \varepsilon \delta_{2 \alpha} \mu^{c}_{3} 
 -2 \varepsilon \delta_{3 \alpha} \mu^{c}_{2})|_{P} 
\]
by Lemma \ref{Lie_deri_hat_theta}. 
By Lemma \ref{invariant}, $Z_{1}$ is a projectable vector field, 
that is, $Z:=\pi_{P}{}_{\ast}(Z_{1})$ is a vector field on $M^{\prime}$. 
The vector field $Z$ satisfies 
$(L_{Z} I^{\prime}_{\alpha})(U)
=\pi_{P}{}_{\ast}((L_{Z_{1}} \hat{I}^{\hat{\theta},c}_{\alpha})(U_{P}))$, where
$U_{P} \in \Gamma(V)$ is any projectable vector field and 
$U=\pi_{P}{}_{\ast}(U_{P})$ is its projection. 
In fact, 
this can be obtained from 
$I^{\prime}_{\alpha} \circ \pi_{P}{}_{\ast} \circ pr_{V}
= \pi_{P}{}_{\ast} \circ pr_{V}\circ \hat{I}^{\hat{\theta},c}_{\alpha}$, where 
$pr_{V}:T \hat{M}|_{P} \to V$ is the projection with respect to 
the $\hat{I}^{\hat{\theta},c}_{\alpha}$-invariant decomposition 
\[ (T \hat{M})|_{P}=V \oplus  
\langle \hat{X}, \hat{I}^{\hat{\theta},c}_{1}\hat{X}, 
\hat{I}^{\hat{\theta},c}_{2}\hat{X}, \hat{I}^{\hat{\theta},c}_{3}\hat{X} 
 \rangle. \] 
Therefore, by Lemmas \ref{Lie_der_hyp_cpx} and \ref{invariant}, 
we see that $Z$ is a $I^{\prime}_{1}$-holomorphic vector field 
such that 
$L_{Z} I^{\prime}_{2} =2 \varepsilon I^{\prime}_{3}$ and
 $L_{Z} I^{\prime}_{3}=-2 \varepsilon I^{\prime}_{2}$. 
Finally, by Lemma \ref{Lie_der_Theta}, we can define $2$-forms 
$\Theta^{\prime}_{1}$, $\Theta^{\prime}_{2}$, $\Theta^{\prime}_{3}$ 
on $M^{\prime}$ 
by $\Theta^{\prime}_{\alpha}(U,W)=(d \hat{\theta}^{c}_{\alpha})(U_{P},W_{P})$ 
for $U=\pi_{P}{}_{\ast}(U_{P})$ and $W=\pi_{P}{}_{\ast}(W_{P})$. 
It is clear that these forms are closed. 
Finally we see that these forms satisfy the desired conditions 
by Lemma \ref{Lie_deri_hat_theta}. 
\end{proof}

\begin{remark}
{\rm 
In Theorem \ref{qh_correspond}, the same conclusion can be 
obtained under the assumption that the action induced by $\hat{X}$ is free 
instead of the assumption 
that the action induced by $X$ is free.
}
\end{remark}

In the case that $Ric^{\nabla}$ is $Q$-hermitian, 
we have the following theorem.

\begin{theorem}\label{one_parameter_family}
Let $(M,Q)$ be a quaternionic manifold.  
We assume that $Q$ is anti-self-dual when $n=1$.
Moreover assume that ${\rm U}(1)$ acts freely on $M$ preserving $Q$. 
We denote by $X$ the vector field generating the ${\rm U}(1)$-action. 
If $Q$ admits a quaternionic connection $\nabla$
such that $L_{X} \nabla=0$, $Ric^{\nabla}$ is $Q$-hermitian and 
\begin{eqnarray}\label{positive_condition1}
(Ric^{\nabla})(X, X) \circ \hat{\pi}
+4(n+2) \langle \theta, \theta \rangle (\hat{X}, \hat{X}) 
\end{eqnarray}
does not vanish on $\hat{M}$, 
then there exists a $1$-parameter family 
$\{ (M^{\prime c}, H^{c}=(I^{\prime c}_{1},I^{\prime c}_{2},I^{\prime c}_{3}) ) \}_{c \neq 0}$
of hypercomplex manifolds   
with an $I^{\prime c}_{1}$-holomorphic vector field 
$Z^{c}$ on $M^{\prime c}$ such that 
$L_{Z^{c}} I^{\prime c}_{2}=2 \varepsilon I^{\prime c}_{3}$, 
$L_{Z^{c}}  I^{\prime c}_{3}=-2 \varepsilon I^{\prime c}_{2}$. 
Moreover the exact $2$-forms $d \hat{\theta}^{c}_{\alpha}$ on $\hat{M}$ 
give a $1$-parameter family 
$\{ (\Theta^{\prime c}_{1}, \Theta^{\prime c}_{2}, 
\Theta^{\prime c}_{3}) \}_{c \neq 0}$ 
of triplets of closed $2$-forms on $M^{\prime c}$ such that 
$L_{Z^{c}} \Theta^{\prime c}_{1}=0$, 
$L_{Z^{c}} \Theta^{\prime c}_{2}=2 \varepsilon \Theta^{\prime c}_{3}$, 
$L_{Z^{c}} \Theta^{\prime c}_{3}=-2 \varepsilon \Theta^{\prime c}_{2}$
and 
\begin{eqnarray}\label{invariant_condition}
\Theta^{\prime c}_{1}(\, \cdot \, ,I^{\prime c}_{1} \, \cdot \, )=
\Theta^{\prime c}_{2}(\, \cdot \, ,I^{\prime c}_{2} \, \cdot \, )=
\Theta^{\prime c}_{3}(\, \cdot \, ,I^{\prime c}_{3} \, \cdot \, ). 
\end{eqnarray}
\end{theorem}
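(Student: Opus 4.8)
The plan is to run the proof of Theorem~\ref{qh_correspond} simultaneously for every $c\neq 0$. The point is that the hypothesis ``$Ric^{\nabla}$ is $Q$-hermitian'' is exactly what makes the two ingredients of that proof --- integrability of the canonical almost hypercomplex structure on $\hat{M}$ and the moment-map conditions --- available for all $c$, not only for $c=-4(n+1)$. Indeed, since $Ric^{\nabla}$ is $Q$-hermitian so is its anti-symmetric part $(Ric^{\nabla})^{a}$, hence by Theorem~\ref{hypercomlex_cone} the triple $(\hat{I}^{\bar{\theta},c}_{1},\hat{I}^{\bar{\theta},c}_{2},\hat{I}^{\bar{\theta},c}_{3})$ is an integrable hypercomplex structure on $\hat{M}$ for every $c\neq 0$. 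Choosing a $\mathrm{U}(1)$-invariant volume form $\nu$ on $M$ (possible since $\mathrm{U}(1)$ is compact), Lemma~\ref{triviality} lets us identify $\hat{M}\cong S\times\mathbb{R}^{>0}$ with $\hat{X}$ tangent to $S$, so the computations of Section~5 apply, and Corollary~\ref{moment_map_cor} shows that under hypothesis~(\ref{positive_condition1}) the map $\mu^{c}$ of~(\ref{mm:eq}) is a hypercomplex moment map for $\hat{X}$ for every $c\neq 0$. As in Theorem~\ref{qh_correspond} the natural lift $\hat{X}$ generates a free $\mathrm{U}(1)$-action, so Joyce's hypercomplex quotient \cite[Proposition~3.1]{J} at the level set $P^{c}=(\mu^{c})^{-1}((1,0,0))$ produces a hypercomplex manifold $(M^{\prime c},H^{c})$ for each $c\neq 0$.

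The vector field $Z^{c}$ and the $2$-forms $\Theta^{\prime c}_{\alpha}$ are then obtained verbatim as in the proof of Theorem~\ref{qh_correspond}: the vector field $Z_{1}=\hat{I}^{\bar{\theta},c}_{1}Z^{c}_{0}$ is tangent to $P^{c}$ and projectable by Lemma~\ref{invariant}, its projection $Z^{c}$ is $I^{\prime c}_{1}$-holomorphic with $L_{Z^{c}}I^{\prime c}_{2}=2\varepsilon I^{\prime c}_{3}$ and $L_{Z^{c}}I^{\prime c}_{3}=-2\varepsilon I^{\prime c}_{2}$ by Lemma~\ref{Lie_der_hyp_cpx}, and Lemma~\ref{Lie_der_Theta} lets the exact forms $d\hat{\theta}^{c}_{\alpha}$ descend to closed $2$-forms $\Theta^{\prime c}_{\alpha}$ on $M^{\prime c}$ whose Lie derivatives along $Z^{c}$ are computed from Lemma~\ref{Lie_deri_hat_theta}, giving $L_{Z^{c}}\Theta^{\prime c}_{1}=0$, $L_{Z^{c}}\Theta^{\prime c}_{2}=2\varepsilon\Theta^{\prime c}_{3}$, $L_{Z^{c}}\Theta^{\prime c}_{3}=-2\varepsilon\Theta^{\prime c}_{2}$.

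The one genuinely new assertion is the compatibility relation~(\ref{invariant_condition}). By Lemma~\ref{ext_der_theta} and $(\hat{I}^{\bar{\theta},c}_{\alpha})^{2}=-\mathrm{id}$ we have $d\hat{\theta}^{c}_{\alpha}(\,\cdot\,,\hat{I}^{\bar{\theta},c}_{\alpha}\,\cdot\,)=-G^{c}_{\alpha}$ on $T\hat{M}$. In~(\ref{two_forms}) the last two terms are manifestly independent of $\alpha$, while the first, $-Ar^{2/c}\Omega_{\alpha}(\,\cdot\,,\hat{I}^{\bar{\theta},c}_{\alpha}\,\cdot\,)$, vanishes on any pair containing a vertical vector (curvature forms are horizontal) and, on horizontal pairs, is independent of $\alpha$ by~(\ref{Omega_Ricc}) precisely because $Ric^{\nabla}$ is $Q$-hermitian. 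Hence $G^{c}_{1}=G^{c}_{2}=G^{c}_{3}$ on all of $T\hat{M}$, so in particular on the $\hat{I}^{\bar{\theta},c}_{\alpha}$-invariant subspace $V\subset T\hat{M}|_{P^{c}}$ from the quotient construction; restricting to $V$ and projecting to $M^{\prime c}$ turns the identity above into~(\ref{invariant_condition}). The main obstacle is purely a matter of bookkeeping around this last step: one must keep track of the fact that $V$ is invariant under each $\hat{I}^{\bar{\theta},c}_{\alpha}$, so that $I^{\prime c}_{\alpha}$ is well defined on $M^{\prime c}$ and the descent $d\hat{\theta}^{c}_{\alpha}(U_{P},\hat{I}^{\bar{\theta},c}_{\alpha}W_{P})\mapsto\Theta^{\prime c}_{\alpha}(\,\cdot\,,I^{\prime c}_{\alpha}\,\cdot\,)$ is legitimate --- this invariance is already recorded in the $\hat{I}^{\bar{\theta},c}_{\alpha}$-invariant splitting $T\hat{M}|_{P^{c}}=V\oplus\langle\hat{X},\hat{I}^{\bar{\theta},c}_{1}\hat{X},\hat{I}^{\bar{\theta},c}_{2}\hat{X},\hat{I}^{\bar{\theta},c}_{3}\hat{X}\rangle$ from the proof of Theorem~\ref{qh_correspond}. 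No analytic input beyond Corollary~\ref{moment_map_cor} is required.
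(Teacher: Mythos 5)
Your proposal is correct and follows essentially the same route as the paper: integrability for all $c\neq 0$ via Theorem \ref{hypercomlex_cone}, the rest of the construction verbatim from Theorem \ref{qh_correspond}, and the identity (\ref{invariant_condition}) from Lemma \ref{ext_der_theta} together with (\ref{Omega_Ricc}) under the $Q$-hermitian hypothesis. The only difference is that you spell out the last step (showing $G^c_1=G^c_2=G^c_3$ on all of $T\hat{M}$ and descending along the $\hat{I}^{\bar\theta,c}_\alpha$-invariant splitting) in more detail than the paper does.
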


\begin{proof}
Since $Ric^{\nabla}$ is $Q$-hermitian, the almost hypercomplex structures 
$(\hat{I}^{\hat{\theta}, c}_{1},\hat{I}^{\hat{\theta}, c}_{2},\hat{I}^{\hat{\theta}, c}_{3})$
on $\hat{M}$ are integrable for all $c \neq 0$ by Theorem \ref{hypercomlex_cone}. 
Following the same procedure as in the proof of Theorem \ref{qh_correspond}, 
we obtain the claims with exception of the equation (\ref{invariant_condition}).
The latter equation follows from 
$Q$-hermitian assumption for $Ric^{\nabla}$
using Lemma \ref{ext_der_theta} and (\ref{Omega_Ricc}).  
\end{proof}

The assumption (\ref{positive_condition}) 
is formulated in terms of  
objects on the 
Swann bundle $\hat{M}$. 
We have the following corollary under assumptions formulated directly on $M$.

\begin{corollary}\label{cor_ricci_positive}
Let $(M,Q)$ be a quaternionic manifold.  
We assume that $Q$ is anti-self-dual when $n=1$.
Moreover assume that ${\rm U}(1)$ acts freely on $M$ preserving $Q$. 
We denote by $X$ the vector field generating the ${\rm U}(1)$-action. 
If $Q$ admits a quaternionic connection $\nabla$
such that $L_{X} \nabla=0$, $(Ric^{\nabla})^{s}(X,X)>0$ and  
{\rm (}\ref{cr_condition}{\rm )} 
is satisfied {\rm (}resp. $Ric^{\nabla}$ is $Q$-hermitian{\rm )}, 
then we have the same conclusion as 
Theorem \ref{qh_correspond} 
{\rm (}resp. Theorem \ref{one_parameter_family}{\rm )}.
\end{corollary}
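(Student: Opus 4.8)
The plan is to deduce this corollary directly from Theorems \ref{qh_correspond} and \ref{one_parameter_family} by verifying that the hypotheses stated on $M$ imply the hypothesis \eqref{positive_condition} (equivalently \eqref{positive_condition1}) on $\hat M$, while the remaining hypotheses are literally the same. So the only thing to check is that the assumption $(Ric^{\nabla})^{s}(X,X)>0$, together with the positive semi-definiteness of $\langle\theta,\theta\rangle$ on the vertical distribution, forces
\[
(Ric^{\nabla})(X,X)\circ\hat\pi + 4(n+2)\,\langle\theta,\theta\rangle(\hat X,\hat X)
\]
to be nowhere zero on $\hat M$.

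First I would observe that $(Ric^{\nabla})(X,X) = (Ric^{\nabla})^{s}(X,X)$ since the antisymmetric part contributes nothing to the diagonal, so $(Ric^{\nabla})(X,X)\circ\hat\pi > 0$ everywhere on $\hat M$ by hypothesis. Next I would check the sign of the second term. By its definition, $\langle\theta,\theta\rangle(Y,Z)=\sum_{i=1}^{3}\theta_{i}(Y)\theta_{i}(Z)$, so $\langle\theta,\theta\rangle(\hat X,\hat X)=\sum_{i=1}^{3}\theta_{i}(\hat X)^{2}\geq 0$ pointwise; here one uses that we are in the situation where $\hat X$ is tangent to $S$ (guaranteed by the existence of a $\mathrm{U}(1)$-invariant volume form via Corollary \ref{ex_affine_con}/Lemma \ref{triviality}, exactly as in the proof of Theorem \ref{qh_correspond}), so that $\theta_{i}(\hat X)$ makes sense as the $\mathfrak{so}(3)$-components of $\bar\theta(\hat X)$. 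Adding a strictly positive function to a nonnegative one yields a strictly positive function; hence \eqref{positive_condition} holds.

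Having established this, the corollary follows immediately: in the case where \eqref{cr_condition} is assumed, all hypotheses of Theorem \ref{qh_correspond} are satisfied, giving the first conclusion; in the case where $Ric^{\nabla}$ is assumed $Q$-hermitian, all hypotheses of Theorem \ref{one_parameter_family} are satisfied, giving the second conclusion. I would write the proof in essentially two sentences plus the sign computation.

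The only mild subtlety — and the single place where one must be a little careful — is the reduction to the case $\hat X\in TS$: one has to invoke that a free $\mathrm{U}(1)$-action admits an invariant volume form, so that $\nu\in\Gamma(S_{0})$ with $L_{X}\nu=0$ exists and Lemma \ref{triviality} applies; without this one cannot even write $\langle\theta,\theta\rangle(\hat X,\hat X)$ as a sum of squares in the naive way. This is not a real obstacle, since it is already carried out verbatim in the proof of Theorem \ref{qh_correspond}, but it should be mentioned rather than silently assumed. Everything else is a triviality about adding a positive number to a nonnegative one.
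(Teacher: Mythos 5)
Your proposal is correct and follows the argument the paper leaves implicit (the corollary is stated without proof there): since $(Ric^{\nabla})(X,X)=(Ric^{\nabla})^{s}(X,X)>0$ and $\langle\theta,\theta\rangle(\hat X,\hat X)=\sum_{i}\theta_{i}(\hat X)^{2}\geq 0$, the expression (\ref{positive_condition}) is strictly positive, so Theorems \ref{qh_correspond} and \ref{one_parameter_family} apply directly. Your remark about invoking the $\mathrm{U}(1)$-invariant volume form to place yourself in the setting of Lemma \ref{triviality} is a reasonable point of care, already handled in the proof of Theorem \ref{qh_correspond}.
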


%


We call the correspondence from $(M,Q,X)$ to $(M^{\prime},H,Z)$ 
or to $\{ (M^{\prime c},H^{c},Z^{c}) \}_{c \neq 0}$ 
described in 
Theorems \ref{qh_correspond} and \ref{one_parameter_family}
the 
{\it Quaternionic/Hypercomplex-correspondence}
({\it Q/H-correspondence} for short). 

\paragraph{A relation with Swann's twist construction.}
Now we explain how $M^{\prime}$ considered just as a smooth manifold 
can be related to $M$ by Swann's twist construction. 
Consider the Lie subgroup 
$\mathrm{U}(1)_{Z_{1}}
:=\{ g \in {\rm SO}(3) \mid Ad_{g}e_{1}=e_{1} \}$ of $\mathrm{SO}(3)$, 
which can be identified with 
${\rm U}(1)$. 
Notice this group is different from the group 
$\langle \hat{X} \rangle \cong \mathrm{U}(1)$
generated by $\hat{X}$. 
Then $P=(\mu^{c})^{-1}((1,0,0))=(\mu^{c})^{-1}(e_{1})$ 
is a principal ${\rm U}(1)_{Z_{1}}$-bundle over $\hat{\pi} (P)$ 
with a connection $\iota_{P}^{\ast}(\theta_{1})$,
where $\iota_{P}:P \to \hat{M}$ is the inclusion map from $P$. 
In fact, the calculation 
\[ \mu^{c}(pg) =\hat{\theta}(\hat{X}_{pg})
   =\hat{\theta}(R_{g \ast} \hat{X}_{p})
   =Ad_{g^{-1}} \hat{\theta}(\hat{X}_{p})
   =Ad_{g^{-1}} e_{1}
   =e_{1} \]
for $p \in P$ and $g \in {\rm U}(1)_{Z_{1}}$ 
shows that $P$ is invariant under $\mathrm{U}(1)_{Z_{1}}$. 
In particular, 
$P \cap \hat{\pi}^{-1}(x)$ is a union of circles ($\mathrm{U}(1)_{Z_{1}}$-orbits). 
Since the functions $\theta_{\alpha}(\hat{X})|_{\hat{\pi}^{-1}(x)}$ on 
$\hat{\pi}^{-1}(x) \cong \mathbb{H}^{\ast}/\{\pm 1\}$
are linear in the natural coordinates on $\mathbb{H} \cong \mathbb{R}^{4}$ and
$\hat{\theta}_{\alpha}=Ar^{\frac{2}{c}} \theta_{\alpha} (\hat{X})$, 
we see that the above intersection  
$P \cap \hat{\pi}^{-1}(x)$
is a single circle. 
Recall \cite{Sw} that Swann's twist construction produces a new manifold $M^{\prime}$ 
from a manifold $M$ with the following twist data:   
a vector field $\xi$, 
a two form $F$ and a function $a$ on $M$.  
More precisely, $\xi$ generates a ${\rm U}(1)$-action, $F$ is an invariant closed 
2-form which is the curvature form of a connection form on 
a principal ${\rm U}(1)$-bundle, and 
$a$ is non-vanishing and satisfies $da=-\iota_{\xi} F$.
It was shown in \cite{MS} that the HK/QK-correspondence 
can be described using the twist construction and a so-called
elementary deformation of the metric. 

In the setting of the Q/H-correspondence, 
let $s:U \to P$, $U \subset \hat{\pi}(P)$ 
be a local section. Then we define
a two form $F$ and a function $a$ on $\hat{\pi}(P)$ by  
\begin{align*}
F &:= s^{\ast}(d (\iota_{P}^{\ast} \theta_{1}))
=s^{\ast}(d \theta_{1})=s^{\ast} \Omega_{1}, \\
a &:= s^{\ast} (\theta_{1}(\hat{X}) \circ \iota_{P}))=\theta_{1}(\hat{X}) \circ s, 
         =s^{\ast} (\theta_{1}(\hat{X})). 
\end{align*}
Note that both 
$F$ and $a$ are independent of the choice of $s$. 
Then we have

\begin{proposition}\label{twist}
As a smooth manifold, $M^{\prime}$ obtained by the Q/H-correspondence  
is a twist of $\hat{\pi}(P)$ 
in the sense of {\rm \cite{Sw}}  with the twist data 
$(\xi=X, F, a)$ as above. 
\end{proposition}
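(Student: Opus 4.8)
The plan is to recognize $M'$ as the quotient $P/\mathrm{U}(1)$, where $P = (\mu^c)^{-1}(e_1)$ and the $\mathrm{U}(1)$ in the quotient is the one generated by $\hat X$ (equivalently $Z_1$, once restricted to $P$, as in the proof of Theorem~\ref{qh_correspond}), while simultaneously recognizing $P$ as a principal $\mathrm{U}(1)_{Z_1}$-bundle over $\hat\pi(P)$ via $\iota_P^\ast\theta_1$, which has already been established in the discussion preceding the statement. The key observation is the ``exchange of roles'': the fibre over a point $x\in\hat\pi(P)$ in the bundle $P\to\hat\pi(P)$ is a single $\mathrm{U}(1)_{Z_1}$-circle, and pushing this fibre down to $M' = P/\langle\hat X\rangle$ is exactly the procedure of Swann's twist construction. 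So the first step is to set up, over a point $x\in\hat\pi(P)$, the two commuting circle actions on $P\cap\hat\pi^{-1}(x)$ — one by $\hat X$, one by $\mathrm{U}(1)_{Z_1}$ — and identify $M'$ near $\pi_P(P\cap\hat\pi^{-1}(x))$ with the $\hat X$-quotient.

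Next I would recall the precise definition of the twist: given $(M,\xi,F,a)$ with $F=d\eta$ the curvature of a principal connection $\eta$ on a $\mathrm{U}(1)$-bundle $\hat\pi(P)^{\sharp}\to\hat\pi(P)$, one forms the $\xi$-quotient of $\hat\pi(P)^{\sharp}$, using that $\iota_\xi F = -da$ guarantees $a$ lifts to a nowhere-zero function whose level set is transverse to $\xi$. I would then match the data: $\hat\pi(P)^{\sharp}$ is $P$ itself, $\eta$ is $\iota_P^\ast\theta_1$, $\xi$ is the (lift of the) $\mathrm{U}(1)$-action, and $F = s^\ast\Omega_1$, $a = \theta_1(\hat X)\circ s$ as defined just above the statement. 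The curvature identity $F = s^\ast(d(\iota_P^\ast\theta_1))$ is immediate from $d\theta_1$ being horizontal (equation~(\ref{curvatute_holizontal})), and the twist compatibility $da = -\iota_X F$ is what I must verify: this should follow from $\iota_{\hat X}\,d\hat\theta^c_1 = -d\mu^c_1$ (Lemma~\ref{ext_der_theta_cpx}), restricted to $P$ where $\mu^c_1\equiv A$ is constant, so that $\iota_{\hat X}d\hat\theta^c_1|_P = 0$; combined with $\hat\theta^c_1 = Ar^{2/c}\theta_1$ and the relation between $a$ and $\theta_1(\hat X)$, and the fact that on $P$ the radial coordinate $r$ is pinned by the constraint $Ar^{2/c}\theta_1(\hat X) = 1$, one extracts exactly $da = -\iota_X F$ after pulling back by $s$. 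This computation — tracking how $r$ varies along $P$ and disentangling $\theta_1(\hat X)$ from $\hat\theta^c_1(\hat X)$ — is the one genuinely fiddly piece and the main obstacle; everything else is bookkeeping.

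Finally I would assemble the diffeomorphism $M'\cong$ twist of $\hat\pi(P)$. One way: both $M'$ and the twist are obtained from $P$ as quotients by the free $\mathrm{U}(1)$-action generated by (the lift of) $X$, so once I have checked that $P$ with the connection $\iota_P^\ast\theta_1$ and the function $a$ constitutes valid twist data for $(\hat\pi(P), \xi=X, F, a)$, the twist is \emph{by definition} $P/\mathrm{U}(1)$, which is $M'$. Thus the proof reduces to (i) citing the structure of $P$ as a principal $\mathrm{U}(1)_{Z_1}$-bundle over $\hat\pi(P)$ with connection $\iota_P^\ast\theta_1$ (already done), (ii) verifying $da = -\iota_X F$, and (iii) noting $a$ is nowhere zero on $\hat\pi(P)$, which holds because $\theta_1(\hat X)$ cannot vanish on $P$ — if it did, then at such a point $\hat\theta^c_1(\hat X) = Ar^{2/c}\theta_1(\hat X) = 0 \neq 1 = \mu^c_1$, a contradiction with $P = (\mu^c)^{-1}(e_1)$. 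Then the identification of $M'$ as the twist is immediate from matching the two quotient constructions.
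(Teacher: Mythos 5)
Your overall strategy matches the paper's: the only substantive thing to verify is that $(\xi=X,F,a)$ is genuine twist data, i.e.\ that $da=-\iota_X F$, after which the identification of $M'$ with the twist is definitional. Where you diverge is in how you propose to prove that identity. The paper's route is shorter and avoids the $r$-bookkeeping you flag as the main obstacle: since $L_X\nabla=0$, Lemmas \ref{invariant} and \ref{L_X} give $L_{\hat X}\theta_1=0$, whence
\[ da=s^{\ast}\bigl(d\,\iota_{\hat X}\theta_{1}\bigr)=-s^{\ast}\bigl(\iota_{\hat X}d\theta_{1}\bigr)=-\Omega_{1}(\hat X,s_{\ast}\,\cdot\,)=-\Omega_{1}(s_{\ast}X,s_{\ast}\,\cdot\,)=-F(X,\cdot\,), \]
where the last two equalities use horizontality of the curvature to replace $\hat X$ by its horizontal part $X^{h}$ and then by $s_{\ast}X$. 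Your detour through $d\mu^{c}_{1}=-\iota_{\hat X}d\hat\theta^{c}_{1}$ and the constancy of $\mu^{c}_{1}$ on $P$ can be made to work (one expands $d\hat\theta^{c}_{1}=A\,d(r^{2/c})\wedge\theta_{1}+Ar^{2/c}d\theta_{1}$, uses $dr(\hat X)=0$ and the constraint $Ar^{2/c}\theta_{1}(\hat X)=1$ on $P$), but at the final step you still need the same horizontality argument to pass from $d\theta_{1}(\hat X,\cdot)$ to $F(X,\cdot)$, and you have not actually carried the computation out --- you only assert that "one extracts" the identity. Two smaller points: the value of $\mu^{c}_{1}$ on $P$ is $1$, not $A$; and the twist data also require $F$ to be an invariant closed $2$-form, which you do not address --- the paper disposes of it in one line via $L_{X}F=(\iota_{X}d+d\iota_{X})F=-dda=0$ once $da=-\iota_{X}F$ is established. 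Your observation that $a$ is nowhere zero on $P$ because $\mu^{c}_{1}=1$ there is correct and is a point the paper leaves implicit.
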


\begin{proof}
Since $L_{X} \nabla =0$, we have
\begin{align*}
da &= s^{\ast} (d \iota_{\hat{X}} \theta_{1} )
    =  - s^{\ast} ( d \theta_{1}(\hat{X}, \,\, \cdot \,\,))
    = - \Omega_{1} (\hat{X}, s_{\ast}( \,\, \cdot \,\,)) \\ 
&=  - \Omega_{1} (X^{h}, s_{\ast}( \,\, \cdot \,\,))
    =  - \Omega_{1} (s_{\ast}(X), s_{\ast}( \,\, \cdot \,\,))
    =-F(X, \,\, \cdot \,\,). 
\end{align*}
Also we obtain
$L_{X} F = (\iota_{X} d +d \iota_{X}) F = -dda=0$. 
\end{proof}

Note that the complex structures $I^{\prime}_{\alpha}$ are not
${\cal H}$-related to $I_{\alpha}$ in the sense of \cite{Sw}, because 
the invariant subbundle 
$V \subset T \hat{M}$ does not coincides with  
${\cal H}$ in general.

\section{Examples}
\setcounter{equation}{0}

In this section, we give examples. \\

\noindent
{\it QK/HK-correspondence}: 
When $M$ is a possibly indefinite quaternionic 
K{\"a}hler manifold with non zero scalar curvature,   
we can take the Levi-Civita connection $\nabla$ 
as a quaternionic connection 
and if there exists a non-zero quaternionic Killing vector field $X$ on $M$, 
then we can take $X$ as the affine quaternionic 
vector field in the $Q/H$-correspondence.   
The tensor field (\ref{two_forms}) gives a (pseudo-)hyper-K{\"a}hler metric 
on $\hat{M}$ 
and (\ref{invariant_condition}) 
gives a (pseudo-)hyper-K{\"a}hler metric on $M^{\prime}$
if $\hat{X}$ is time-like or space-like
(see \cite{ACDM}). 
Therefore our Q/H-correspondence is a generalization of 
the QK/HK-correspondence. 
The following example is well-known 
(see \cite{H, GaLa, Sw1} for example).

\begin{example}[The cotangent bundle $T^{\ast} {\mathbb C}P^{n}$ as a 
hyper-K{\"a}hler manifold]
{\rm 
Consider the quaternionic (right-)projective space 
$M={\mathbb H}P^{n}$ with the standard quaternionic structure. 
We can choose the Levi-Civita connection $\nabla$ of 
the standard quaternionic K{\"a}hler metric on 
$M$ as a quaternionic connection. 
Then we see the Swann bundle 
$\hat{M}=({\mathbb H}^{n+1} \backslash \{ 0 \})/\{ \pm 1\} \to M$ 
as a hypercomplex manifold with the hypercomplex structure 
$(\hat{I}^{\bar{\theta},c}_{1},\hat{I}^{\bar{\theta},c}_{2},
\hat{I}^{\bar{\theta},c}_{3})$, where $\bar{\theta}$ is the principal connection 
associated with the Levi-Civita connection.  
Let $X$ be the vector field on $M$ 
which generates the ${\rm U}(1)$-action on $M$ by 
quaternionic affine transformations defined by 
$e^{i \theta} \cdot [z_{0}, \dots, z_{n}] 
:= [e^{i \theta}  z_{0}, \dots, e^{i \theta}  z_{n}]$ 
for $e^{i \theta}  \in \rm{U(1)}$ and $[z_{0}, \dots, z_{n}] \in M$. 
It holds that the Ricci tensor 
$Ric^{\nabla}$ is $Q$-hermitian and $Ric^{\nabla}(X,X)>0$. 
Then  we can apply Corollary \ref{cor_ricci_positive}. 
Note that the action induces the
well-known hyper-K{\"a}hler moment map on $\hat{M}$ 
when $c=1$. 
The hyper-K{\"a}hler metric 
$G^{1}_{\alpha}=G^{1}_{\beta}=G^{1}_{\gamma}$ is given by 
a constant multiple of the standard Euclidean flat metric 
on ${\mathbb H}^{n+1} \backslash \{ 0 \}$. 
Applying the $QK/HK$-correspondence 
(which amounts to taking the hyper-K{\"a}hler quotient of $\hat{M}$ 
with respect to the vector field $\hat{X}$)
to this example yields 
Calabi's hyper-K{\"a}hler structure on 
$T^{\ast} {\mathbb C}P^{n}$. 
}
\end{example}



\noindent
{\it Hypercomplex manifold with the Obata connection}: 
Let $(M,(I_{1},I_{2}, I_{3}))$ be a hypercomplex manifold 
and $\nabla$ its Obata connection on $M$. 
We recall the Obata connection is a canonical torsion-free 
connection preserving the hypercomplex structure \cite{O}.  
In particular, it is a quaternionic connection with respect to 
the quaternionic structure $Q=\langle I_{1},I_{2},I_{3} \rangle$. 
Assume that a vector field $X$ 
with the flow $\{ \varphi_{t} \}_{t \in \mathbb{R}}$ on $M$ is given,  
which generates a free action of 
${\rm U}(1) = \mathbb{R}/2\pi \mathbb{Z}$ on $M$  such that 
\begin{equation} \label{rot:eq} 
L_{X} I_{1}=0,\quad L_{X} I_{2}=2 \varepsilon I_{3}\quad \mbox{and}\quad  
L_{X} I_{3}=-2 \varepsilon I_{2}. 
\end{equation}
Then it holds 
\[
\varphi^{\ast}_{-t} I_{1} = I_{1}, \,
\varphi^{\ast}_{-t} I_{2}
=(\cos (2 \varepsilon t)) I_{2} + (\sin (2 \varepsilon t)) I_{3}, \,
\varphi^{\ast}_{-t} I_{3}
= (-\sin (2 \varepsilon t)) I_{2} + (\cos (2 \varepsilon t)) I_{3}. 
\]
This shows that 
$X$ is a quaternionic vector field for the quaternionic structure 
$Q=\langle I_{1},I_{2},I_{3} \rangle$. 
Since $(\varphi_{-t}^{\ast} \nabla)$ is the Obata connection 
for the hypercomplex structure \linebreak
$(\varphi^{\ast}_{-t} I_{1},\varphi^{\ast}_{-t} I_{2}, \varphi^{\ast}_{-t} I_{3})$,  
$(\varphi_{-t}^{\ast} \nabla)$ is again a quaternionic connection for $Q$. 
By the explicit expression 
of the Obata connection in \cite{AM1}, 
we have 
\[ \frac{d}{dt} (\varphi_{-t}^{\ast} \nabla) = 0, \]
and hence $\varphi_{-t}^{ \ast} \nabla=\nabla$ 
for all $t$. It follows that $L_{X} \nabla=0$. 
Because the Ricci curvature of the Obata connection is 
skew symmetric and $Q$-hermitian by 
Corollary 1.6 in \cite{AM1}, 
we can apply the Q/H-correspondence to $(M,Q,\nabla)$ 
obtaining a hypercomplex manifold $M^{\prime}$. 
The manifolds $M$ and $M^{\prime}$ are related as follows.

\begin{proposition}\label{double}
$M$ is a double covering space of $M^{\prime}$. 
\end{proposition}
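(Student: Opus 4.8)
The plan is to carry out the construction underlying Theorem~\ref{qh_correspond} with all the bundles made explicit in this special case. Since $(I_1,I_2,I_3)$ is a global admissible frame, the section $s_0=(I_1,I_2,I_3)$ trivializes the frame bundle, $S\cong M\times\mathrm{SO}(3)$, and because $\nabla I_\alpha=0$ for the Obata connection the identity $\nabla I_\alpha=\varepsilon(\theta^U_\gamma\otimes I_\beta-\theta^U_\beta\otimes I_\gamma)$ (with $\theta^U=s_0^{\ast}\theta$) forces $s_0^{\ast}\theta=0$, i.e.\ $\theta$ is the product connection; similarly $\Omega^U_\alpha=0$. Averaging any volume form of $M$ over the free $\mathrm{U}(1)$-action produces $\nu\in\Gamma(S_0)$ with $L_X\nu=0$ and hence a trivialization $S_0\cong M\times\mathbb{R}^{>0}$, so $\hat M\cong M\times\mathrm{SO}(3)\times\mathbb{R}^{>0}$, and by Lemma~\ref{triviality} the natural lift $\hat X$ is tangent to $S=M\times\mathrm{SO}(3)\times\{1\}$.

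Next I would pin down $\hat X$ and the moment map. Integrating (\ref{rot:eq}) gives $\varphi^{\ast}_{-t}I_1=I_1$ with $(\varphi^{\ast}_{-t}I_2,\varphi^{\ast}_{-t}I_3)$ obtained from $(I_2,I_3)$ by rotation through the angle $2\varepsilon t$, so, because the natural lift to a frame bundle commutes with the principal action, $\hat\varphi_t$ translates the $\mathrm{SO}(3)$-coordinate by a one-parameter subgroup $\ell_t$ of $\mathrm{U}(1)_{Z_1}=\{h\in\mathrm{SO}(3)\mid\mathrm{Ad}_h e_1=e_1\}$, and it leaves $r$ fixed; the key point is that $\ell_t$ turns a plane through the full angle $2\varepsilon t$, so $\ell_t=\mathrm{id}$ if and only if $t\in\pi\mathbb{Z}$. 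Using $s_0^{\ast}\theta=0$ one computes $\theta(\hat X)_{(x,g,r)}=\varepsilon\,\mathrm{Ad}_{g^{-1}}e_1$ (the case $\varepsilon=-1$ is entirely analogous), whence, since $\hat\theta^c_\alpha|_{TS}=Ar^{2/c}\theta_\alpha$ and $\hat X\in TS$,
\[ \mu^c(x,g,r)=\varepsilon Ar^{2/c}\,\mathrm{Ad}_{g^{-1}}e_1\in\mathbb{R}^3 . \]
As $\mathrm{Ad}$ preserves the norm, after fixing the sign and value of the constant $A$ the level set is $P=(\mu^c)^{-1}((1,0,0))=M\times\mathrm{U}(1)_{Z_1}\times\{r_0\}\cong M\times\mathrm{U}(1)_{Z_1}$ for a unique $r_0>0$, and on it $\mathrm{U}(1)=\langle\hat X\rangle=\mathbb{R}/2\pi\mathbb{Z}$ acts by $t\cdot(x,g)=(\varphi_t(x),\ell_t g)$.

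Finally I would analyse $M'=P/\mathrm{U}(1)$. The crucial observation is that $t\mapsto\ell_t$ realizes $\mathrm{U}(1)_{Z_1}$ as $\mathbb{R}/\pi\mathbb{Z}$, so $\mathrm{U}(1)=\mathbb{R}/2\pi\mathbb{Z}$ covers the fibre circle $\mathrm{U}(1)_{Z_1}$ two-to-one: the subgroup $\mathbb{Z}_2=\{0,\pi\}\subset\mathrm{U}(1)$ acts on $P=M\times\mathrm{U}(1)_{Z_1}$ through the first factor only, by the involution $\varphi_\pi$, which is fixed-point-free because the $\mathrm{U}(1)$-action on $M$ is free and $\varphi_\pi^2=\varphi_{2\pi}=\mathrm{id}$, while the residual $\mathbb{R}/\pi\mathbb{Z}$ acts freely by translation on the $\mathrm{U}(1)_{Z_1}$-factor. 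Quotienting in two stages then identifies $M'\cong M/\langle\varphi_\pi\rangle=M/\mathbb{Z}_2$; equivalently, the map sending $x\in M$ to the class of $(x,e,r_0)$ in $M'$ is surjective and exactly two-to-one (every $\mathrm{U}(1)$-orbit in $P$ meets $M\times\{e\}\times\{r_0\}$, and it meets it in the two points $(x,e,r_0)$ and $(\varphi_\pi(x),e,r_0)$) and is a local diffeomorphism ($\hat X$ is transverse to $T(M\times\{e\})$ and $\dim M=\dim M'$), hence a double covering map. I expect the only genuinely delicate step to be precisely this bookkeeping: keeping the two circles distinct — the fibre circle $\mathrm{U}(1)_{Z_1}\subset\mathrm{SO}(3)$ versus the circle $\langle\hat X\rangle$ — and tracking periods closely enough to see that the discrepancy between them is exactly the deck transformation $\varphi_\pi$; everything else follows routinely once the explicit trivializations above are set up.
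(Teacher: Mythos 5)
Your proof is correct and follows essentially the same route as the paper: trivialize $S$ by the global parallel frame, compute $\hat{X}$ and the moment map explicitly, identify $P\cong M\times\mathrm{U}(1)_{Z_1}$, and exploit the period mismatch between $\varphi_t$ (period $2\pi$) and the induced frame rotation $\ell_t$ (period $\pi$) to exhibit $\varphi_\pi$ as the deck transformation of the two-to-one map $x\mapsto\pi_P((x,s(x),A^{-c/2}))$. The paper's proof additionally records that this covering map intertwines $I_\alpha$ with $I'_\alpha$ (using $V=\mathcal{H}$ on $P$), which is used later but is not required for the bare statement of the proposition.
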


\begin{proof}
The hypercomplex structure is a global section 
$s:M \to S$ 
\[ x \mapsto s(x)=(I_{1}(x),I_{2}(x),I_{3}(x)), \] 
and 
defines a global trivialization of the principal 
${\rm SO}(3)$-bundle $S$. Take a $\mathrm{U}(1)$-invariant volume form.  
Since 
\begin{eqnarray*}
\hat{M} &=&  
\{ (x,s(x)g,r) \mid x \in M, \,\, g \in \mathrm{SO}(3), \,\, r >0 \} \\ 
&\cong& M \times {\rm SO}(3) \times {\mathbb R}^{>0} 
\supset M \times {\rm SO}(3) \times \{ 1 \} \cong 
M \times {\rm SO}(3) =S 
\end{eqnarray*}
and 
$(I_{1}, \varphi^{\ast}_{-t} I_{2}, \varphi^{\ast}_{-t} I_{3})
   =(I_{1},I_{2},I_{3}) g_{\varepsilon t}
   =(I_{1},I_{2},I_{3}) g_{t}^{\varepsilon}, $    
where 
\begin{equation} \label{gt:eq} 
g_{t}=   
   \left( \begin{array}{ccc}
                    1 & 0 & 0  \\
                    0 & \cos(2 t) &  -\sin (2 t)\\
                    0 & \sin (2 t) &  \cos (2 t)\\
                    \end{array} 
             \right), 
\end{equation}
we can write 
$\hat{\varphi}_{t}(x, (I_{1},I_{2},I_{3}), r)
=(\varphi_{t}(x), (I_{1},I_{2},I_{3})g_{\varepsilon t}, r)$
and hence $\hat{X}_{s(x)}=X_{s(x)}^{h} +(\widetilde{e}_{1})_{s(x)}$. 
Therefore we see that  
$\hat{X}_{s(x) g^{\varepsilon }}
= X_{x}^{h} + \widetilde{(Ad_{g^{-\varepsilon} }e_{1})}_{s(x)g^{\varepsilon}}$, 
where $g \in {\rm SO}(3)$. Then the moment map 
$\mu^{c}:\hat{M} \to \mathfrak{so}(3)(={\mathbb R}^{3})$ 
on $\hat{M}$ is given by 
\[ \mu^{c}(p)=A r^{\frac{2}{c}} \theta(\hat{X}_{p}) 
=A r^{\frac{2}{c}} g^{-\varepsilon} e_{1} g^{\varepsilon} \]
at any point $p=(x,s(x)g^{\varepsilon},r) \in \hat{M}$. 
The level set 
$P:=(\mu^{c})^{-1}(e_{1})=(\mu^{c})^{-1}((1,0,0))$ is given by 
\[ \{ (x,s(x)g^{\varepsilon},r) \in \hat{M} \mid x \in M, \,\,
A r^{\frac{2}{c}} g^{-\varepsilon} e_{1} g^{\varepsilon} 
=e_{1 } \}. \]
Hence we have 
\begin{equation}\label{level_set} 
P=\{ (x,s(x)g^{\varepsilon}, A^{-\frac{c}{2}}) \in \hat{M}  \mid 
x \in M, \,\, g \in {\rm U}(1) \} \cong M \times {\rm U}(1). 
\end{equation}
We obtain a hypercomplex manifold 
$M^{\prime}=P / \langle \hat{X} \rangle$, where 
$\langle \hat{X} \rangle \cong \mathrm{U}(1)$. 
Define a map $k:M \to M^{\prime}$ by 
$k(x)=\pi_{P} ((x,s(x),A^{-\frac{c}{2}}))$ for each $x \in M$, 
where $\pi_P : P \rightarrow M^{\prime}=P/\langle \hat{X}\rangle$ 
is the quotient map. 
Since 
$k^{-1}(y)=\pi_{P}^{-1}(y) \cap (s(M) \times \{A^{-\frac{c}{2}}\}))$
consists of exactly two points for each $y \in M^{\prime}$ 
by (\ref{gt:eq}), $k$ is a double covering map. 
By (\ref{funda3}) in Lemma \ref{cr_cond}, it holds 
$V_{p}=\{ v \in T_{p}P 
\mid (d \mu^{c}_{\alpha} \circ \hat{I}^{\hat{\theta},c}_{\alpha})(v)=0 \}
={\cal H}_{p}$. 
It follows that $\pi_{P}^{\ast}(TM^{\prime}) \cong {\cal H} |_{P}$, 
where ${\cal H}$ is the horizontal subbundle with respect to 
the Obata connection. 
Since $s_{\ast}(Y)=Y^{h}$ for $Y \in TM$, we have
\[ k_{\ast}(I_{\alpha}Y)
=\pi_{P \ast} (s_{\ast}I_{\alpha}Y)
=\pi_{P \ast} ((I_{\alpha}Y)^{h})
=I^{\prime}_{\alpha} (\pi_{P \ast} (Y^{h}))
=I^{\prime}_{\alpha} (\pi_{P \ast} s_{\ast} Y)
=I^{\prime}_{\alpha} (k_{\ast}Y). 
\]
Therefore $k:M \to M^{\prime}$ is a double covering map 
satisfying $k_{\ast} \circ I_{\alpha}=I^{\prime}_{\alpha} \circ k_{\ast}$. 
\end{proof}

Note that $M^{\prime}$ is obtained by
the twist data $(X,F=0,a=1)$.

\begin{example}\label{swann_ex}
{\rm 
For the Swann bundle $\hat{M}$ of a quaternionic manifold $(M,Q)$, we see that 
$Z_{1}=-\hat{I}_{1}^{\bar{\theta},c} Z_{0}^{c}$ satisfies the conditions 
required above by Lemma \ref{Lie_der_hyp_cpx}. 
So $\hat{M}$ is a double covering 
space of $(\hat{M})^{\prime}$. 
}
\end{example}

\noindent
{\it Quaternionic Hopf manifold}: 
Consider ${\mathbb H}^{n} \cong {\mathbb R}^{4n}$ 
as a right-vector space over the quaternions. 
Set $\tilde{M}:={\mathbb H}^{n} \backslash \{ 0 \}$. 
The standard hypercomplex structure 
$\tilde{H}=(\tilde{I}_{1},\tilde{I}_{2},\tilde{I}_{3})$
on $\tilde{M}$ is defined by 
$\tilde{I}_{1}=R_{i},\tilde{I}_{2}=R_{j},\tilde{I}_{3}=-R_{k}$, where $R_{q}$ is 
the right-multiplication by $q \in {\mathbb H}$. 
The hypercomplex structure $\tilde{H}$
gives a global section 
$s:\tilde{M} \to S(\cong \tilde{M} \times \mathrm{SO}(3))$ 
as in the previous example.  
The corresponding quaternionic structure is denoted by 
$\tilde{Q}=\langle \tilde{I}_{1},\tilde{I}_{2},\tilde{I}_{3} \rangle$. 
Let $\tilde{g}$ be the standard flat hyper-K{\"a}hler metric on $\tilde{M}$, 
$A \in {\rm Sp}(n){\rm Sp}(1)$ and $\lambda > 1$. Then 
$\gamma:=\lambda A$ generates a group $\Gamma=\langle \gamma \rangle$ 
of homotheties 
which acts freely and properly discontinuously 
on the simply connected manifold $(\tilde{M},\tilde{g})$. 
We can identify $\tilde{M}$ with ${\mathbb R} \times S^{4n-1}$ by 
means of the diffeomorphism $v \mapsto (t,v/\| v \| )$, where 
$t=\log \| v \| / \log \lambda$. Under this identification, $\gamma$ 
corresponds to the transformation
\begin{eqnarray}\label{expression_A}
T_{A}: {\mathbb R} \times S^{4n-1} \to {\mathbb R} \times S^{4n-1}, 
\,\,\, (t,v) \mapsto (t+1,Av). 
\end{eqnarray}
The quotient 
$\tilde{M}/ \Gamma \cong ({\mathbb R} \times S^{4n-1}) / 
\langle T_{A} \rangle$ 
is diffeomorphic to $S^{1} \times S^{4n-1}$ 
and inherits a quaternionic structure $Q$ 
and a quaternionic connection $\nabla$, 
both invariant under the centralizer 
$G^{Q}:=Z_{{\rm GL}(n,\mathbb H){\rm Sp}(1)} (\{ \gamma \})$ 
of $\gamma$ in 
${\rm GL}(n,\mathbb H){\rm Sp}(1)$. 
(In particular, if $A \in {\rm Sp}(n)$, 
then $\tilde{M} / \Gamma$ 
inherits a hypercomplex structure $H$ and its Obata connection $\nabla$, 
both invariant under the centralizer 
$G^{H} := Z_{{\rm GL}(n,\mathbb H)} (\{ \gamma \})$ 
of $\gamma$ in 
${\rm GL}(n,\mathbb H)$.) 
In fact, 
the quaternionic structure $\tilde{Q}$ on $\tilde{M}$ is 
${\rm GL}(n,\mathbb H){\rm Sp}(1)$-invariant and 
induces therefore an almost quaternionic structure 
$Q$ on $\tilde{M} / \Gamma$, 
since $\Gamma \subset {\rm GL}(n,\mathbb H){\rm Sp}(1)$.  
Moreover, the Levi-Civita connection connection $\tilde{\nabla}$ 
on $(\tilde{M},\tilde{g})$, 
which coincides with the Obata connection with respect to $\tilde{H}$, 
is invariant under all homotheties of $\tilde{M}$. 
Since $\Gamma$ acts by homotheties, we see that 
$\tilde{\nabla}$ induces a torsion-free connection $\nabla$ on 
$\tilde{M}/ \Gamma$, which preserves $Q$. 
This means that $Q$ is a quaternionic
structure on $\tilde{M} / \Gamma$. 
The group $G^{Q}$ acts on 
$\tilde{M} / \Gamma$ preserving the data $(Q,\nabla)$.
If $A \in {\rm Sp}(n)$, then $\Gamma$ preserves the hypercomplex structure
$\tilde{H}$ on $\tilde{M}$ 
and thus induces a hypercomplex structure $H$ and 
$\tilde{\nabla}$ induces the Obata connection $\nabla$ on 
$(\tilde{M} / \Gamma, H)$.  
The centralizer $G^{H}$ of $\gamma=A$ in ${\rm GL}(n,\mathbb H)$
acts on $\tilde{M} / \Gamma$ preserving $(H,\nabla)$. 
We say that $(\tilde{M} / \Gamma,Q)$ (resp. $(\tilde{M} / \Gamma,H)$)
is a {\it quaternionic {\rm(}resp.\ hypercomplex{\rm)} 
Hopf manifold}. Note that the hypercomplex Hopf manifolds
are sometimes called quaternionic Hopf manifolds 
(see \cite{OP} for example). 
  
Now taking $A=R_{q}$ for some unit quaternion $q \neq \pm1$, 
we have a quaternionic Hopf manifold $M=\tilde{M}/\Gamma$. 
Then we see
$G^{Q}={\rm GL}(n,\mathbb H){\rm U}(1) 
={\mathbb R}^{>0} \times {\rm SL}(n,\mathbb H){\rm U}(1)$,  where  
$\mathrm{U}(1)$ denotes the centralizer of $q$ in $\mathrm{Sp}(1)$. 
Up to an automorphism of $\mathrm{Sp}(1)$, we can assume that 
\[ \mathrm{U}(1) = \{ e^{i\theta} \mid \theta \in \mathbb{R}\}.\] 
We take a subgroup 
${\mathbb R}^{>0} \times {\rm Sp}(n){\rm U}(1)$ of $G^{Q}$, 
which acts on $M$ transitively. 
The isotropy subgroup is given by
$\langle \lambda \rangle \times {\rm Sp}(n-1) \triangle_{{\rm U}(1)}$, 
where $\triangle_{U(1)}$ is a diagonally embedded subgroup of 
${\rm Sp}(n) {\rm U}(1) \subset {\rm Sp}(n) {\rm Sp}(1)$ which is isomorphic to 
${\rm U}(1)$. This has an expression as
\[{\rm Sp}(n-1) \triangle_{{\rm U}(1)} 
=\left\{ \left[  
 \left(
\begin{array}{@{\,}c|ccc@{\,}}
e^{i \theta} & 0 & \cdots & 0 \\
\hline
0 &&&\\
\vdots&\multicolumn{3}{c}{\raisebox{-2pt}[0pt][0pt]{\large $A$}}\\
0 &&&\\
\end{array}
\right), \,\, e^{i \theta } \,\,
\right] 
\,\, \middle| \,\, A \in {\rm Sp}(n-1), e^{i \theta} \in {\rm U}(1)
\right\}
.\] 
As described above, we obtain 
an invariant quaternionic structure on the homogeneous space
\[ M=({\mathbb R}^{>0} / \langle \lambda \rangle) \times
    \frac{{\rm Sp}(n) {\rm U}(1)}{{\rm Sp}(n-1) \triangle_{{\rm U}(1)}}. 
\]

\begin{remark}
{\rm 
In particular, 
for $n=1$, this yields a left invariant quaternionic structure on 
${\rm U}(1) \times {\rm Sp}(1)$. For $n=2$, we obtain 
an invariant quaternionic structure on the homogeneous space
\[ {\rm U}(1) \times 
\frac{{\rm Sp}(2) {\rm U}(1)}{{\rm Sp}(1) \triangle_{{\rm U}(1)}}
=\frac{T^{2} \cdot {\rm Sp}(2)}{{\rm U}(2)}. \]
Note that the homogeneous quaternionic space 
$T^{2} \cdot {\rm Sp}(2) / {\rm U}(2)$ has a finite covering
of the form $(T^{2} \times G)/{\rm U}(2)$, where 
$G$ is a compact semisimple Lie group, namely $\mathrm{Sp}(2)$. 
This presentation is of the form
$(T^{k} \times G)/{\rm U}(2)$ as considered in \cite{J1}. 
}
\end{remark}

Consider the ${\rm U}(1)$-action on $\tilde{M}$ defined by 
the right-multiplication  
by elements of 
${\rm U}(1) (\subset {\mathbb R}^{>0} \times {\rm Sp}(n){\rm U}(1)
\subset G^{Q})$ : 
$z \mapsto z \cdot e^{\varepsilon i t}$ 
$(z \in \tilde{M})$. 
Then the corresponding vector field $\tilde{X}$ satisfies
$
      \tilde{X}_{z}
= \varepsilon z i =  \varepsilon \tilde{I}_{1} z$ 
for $z \in \tilde{M}$. 
Moreover we see that the relations (\ref{rot:eq}) in the previous example hold, 
that is, 
$L_{\tilde{X}} \tilde{I}_{1}=0$, 
$L_{\tilde{X}} \tilde{I}_{2}=2 \varepsilon \tilde{I}_{3}$,
$L_{\tilde{X}} \tilde{I}_{3}=-2 \varepsilon \tilde{I}_{2}$. 
The ${\rm U}(1)$-action preserving the quaternionic structure induces 
one on $M$ and 
$\tilde{X}$ induces the vector field $X$ on $M$ generating 
the latter ${\rm U}(1)$-action on $M$. 
Considering the hypercomplex moment map on the 
Swann bundle $\hat{\tilde{M}}$ (resp. $\hat{M}$) of $\tilde{M}$ (resp. $M$) 
and the level set 
$\tilde{P} \subset \hat{\tilde{M}}$ (resp. $P \subset \hat{M}$) 
of the corresponding moment map, 
we can obtain 
a hypercomplex manifold $\tilde{M}^{\prime}$ (resp. $M^{\prime}$). 
In fact, since
$Ric^{\tilde{\nabla}}=0$ (resp. $Ric^{\nabla}=0$) and 
$\hat{\tilde{X}}$ (resp. $\hat{X}$) is not horizontal, 
the $Q/H$-correspondence can be applied to $\tilde{M}$ (resp. $M$), 
cf (\ref{positive_condition}). 

Now we consider $\tilde{M}_{+}:=\tilde{M}/\{ \pm 1\}$ and 
$M_{+}:=M/\{ \pm 1\}$. 
The quotient maps by the action of the group $\{ \pm 1\}\cong\mathbb{Z}_2$ on the manifolds are denoted by 
$\tilde{\pi}_{+}:\tilde{M} \to \tilde{M}_{+}$ and  
$\pi_{+}:M \to M_{+}$, respectively. 
The induced objects on 
$\tilde{M}_{+}$ and $M_{+}$ are denoted by the same letter. 
We obtain a hypercomplex isomorphism between $\tilde{M}^{\prime}$ and 
$\tilde{M}_{+}$ as follows. 
Define $\tilde{f}:\tilde{M}^{\prime} \to \tilde{M}_{+}$ by 
$\tilde{f}(x)=\tilde{\pi}_{+}(\hat{\tilde{\pi}}(u))$ 
for any $x \in \tilde{M}^{\prime}$, 
where  
$\hat{\tilde{\pi}} : \hat{\tilde{M}} \to \tilde{M}$ is the
bundle projection and 
$u  \in \pi_{\tilde{P}}^{-1}( x ) \cap (s(\tilde{M}) \times \{A^{-\frac{c}{2}}\})$. 
Since $\pi_{\tilde{P}}^{-1}(x) \cap (s(\tilde{M}) \times \{A^{-\frac{c}{2}}\})$
consists of exactly two points of the form 
$\{ (\pm p, \tilde{H}, A^{-\frac{c}{2}}) \}$ 
as we observed in the proof of Proposition \ref{double}, 
$\tilde{f}$ is well-defined. It is easy to see
\begin{eqnarray}\label{commut_1}
\tilde{f} \circ \pi_{\tilde{P}}=\tilde{\pi}_{+} \circ \hat{\tilde{\pi}}
\end{eqnarray}
on $s(\tilde{M}) \times \{A^{-\frac{c}{2}}\}$ by the definition of $\tilde{f}$. 
Furthermore we have 
\begin{eqnarray}\label{commut_2}
\tilde{f} \circ \tilde{k}=\tilde{\pi}_{+},
\end{eqnarray}
where $\tilde{k}:\tilde{M} \to \tilde{M}^{\prime}$ is the double covering 
as in Proposition \ref{double} for $\tilde{M}$. In fact, from (\ref{commut_1}), 
it follows that 
$\tilde{f}(\tilde{k}(x))=\tilde{f}(\pi_{\tilde{P}}(s(x),A^{-\frac{c}{2}}))
   =\tilde{\pi}_{+} (\hat{\tilde{\pi}}(s(x),A^{-\frac{c}{2}}))
   =\tilde{\pi}_{+} (x)$
for all $x \in \tilde{M}$.


\begin{lemma}\label{ExHypcomx}
The map $\tilde{f}: \tilde{M}^{\prime} \to \tilde{M}_{+}$ is an isomorphism 
of hypercomplex manifolds.
\end{lemma}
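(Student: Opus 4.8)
The plan is to verify the two things required of an isomorphism of hypercomplex manifolds: that $\tilde f$ is a diffeomorphism, and that $\tilde f_\ast$ intertwines the complex structures. Both $\tilde k:\tilde M\to\tilde M^{\prime}$, the double covering of Proposition \ref{double} applied to $\tilde M$, and $\tilde\pi_+:\tilde M\to\tilde M_+$ are covering maps, hence admit local smooth sections; combining such a section of $\tilde k$ with the identity $\tilde f\circ\tilde k=\tilde\pi_+$ (equation (\ref{commut_2})) shows that, near any $x_0\in\tilde M^{\prime}$, the map $\tilde f$ equals $\tilde\pi_+$ composed with a local inverse of $\tilde k$. Thus $\tilde f$ is smooth and a local diffeomorphism, and it remains to prove bijectivity and then the hypercomplex compatibility.

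For bijectivity, surjectivity is immediate because $\tilde\pi_+$ is onto and factors as $\tilde f\circ\tilde k$. For injectivity I would recall, as in the proof of Proposition \ref{double} specialised to $\tilde M=\mathbb{H}^n\backslash\{0\}$ where the frame section $s$ is the constant frame $\tilde H$, that the $\langle\hat{\tilde X}\rangle$-orbit of $(z,\tilde H,A^{-c/2})$ is $t\mapsto(z\,e^{\varepsilon i t},\tilde H g_{\varepsilon t},A^{-c/2})$ and meets the slice $s(\tilde M)\times\{A^{-c/2}\}$ exactly when $g_{\varepsilon t}=\mathrm{id}$, that is at $t\in\pi\mathbb{Z}$, yielding the two points $(\pm z,\tilde H,A^{-c/2})$. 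Hence $\tilde k(-z)=\tilde k(z)$ and $\tilde k^{-1}(\tilde k(z))=\{z,-z\}$, exactly as for $\tilde\pi_+$. Now if $\tilde f(x_1)=\tilde f(x_2)$, choose $z_i$ with $\tilde k(z_i)=x_i$; then $\tilde\pi_+(z_1)=\tilde f(x_1)=\tilde f(x_2)=\tilde\pi_+(z_2)$ forces $z_2=\pm z_1$, and in either case $x_2=\tilde k(z_2)=\tilde k(z_1)=x_1$. A bijective local diffeomorphism is a diffeomorphism.

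For the compatibility with the hypercomplex structures, Proposition \ref{double} already gives $\tilde k_\ast\circ\tilde I_\alpha=\tilde I^{\prime}_\alpha\circ\tilde k_\ast$, where $\tilde I^{\prime}_\alpha$ denotes the hypercomplex structure of $\tilde M^{\prime}$. On the other side, the involution $z\mapsto-z$ is right multiplication by $-1\in\mathbb{H}$, which commutes with $\tilde I_1=R_i$, $\tilde I_2=R_j$, $\tilde I_3=-R_k$, so $\{\pm1\}$ acts on $\tilde M$ by hypercomplex automorphisms and the induced structure on $\tilde M_+$ satisfies $\tilde\pi_{+\ast}\circ\tilde I_\alpha=\tilde I_\alpha\circ\tilde\pi_{+\ast}$. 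Differentiating $\tilde f\circ\tilde k=\tilde\pi_+$ and using these two relations, $\tilde f_\ast\circ\tilde I^{\prime}_\alpha\circ\tilde k_\ast=\tilde f_\ast\circ\tilde k_\ast\circ\tilde I_\alpha=\tilde\pi_{+\ast}\circ\tilde I_\alpha=\tilde I_\alpha\circ\tilde f_\ast\circ\tilde k_\ast$; since $\tilde k_\ast$ is pointwise bijective, $\tilde f_\ast\circ\tilde I^{\prime}_\alpha=\tilde I_\alpha\circ\tilde f_\ast$, so $\tilde f$ is an isomorphism of hypercomplex manifolds.

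The only slightly delicate point is the bookkeeping identifying the fibres of $\tilde k$ with the pairs $\{z,-z\}$, i.e.\ checking that the orbit through $(z,\tilde H,A^{-c/2})$ returns to the slice $s(\tilde M)\times\{A^{-c/2}\}$ precisely at $t\in\pi\mathbb{Z}$ and nowhere else; but this is contained in the computation already performed for Proposition \ref{double} together with the observation recorded just before the statement, so everything else is routine.
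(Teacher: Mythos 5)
Your proof is correct and follows essentially the same route as the paper's: surjectivity via the factorization $\tilde f\circ\tilde k=\tilde\pi_+$, injectivity by reducing both fibres to the pairs $\{z,-z\}$, and compatibility of the complex structures via the horizontal-lift description. The only (harmless) difference is that you deduce the intertwining $\tilde f_\ast\circ \tilde I'_\alpha=\tilde I_\alpha\circ\tilde f_\ast$ from Proposition \ref{double} together with (\ref{commut_2}) and the pointwise bijectivity of $\tilde k_\ast$, whereas the paper recomputes it directly with horizontal lifts; your version also makes the smoothness of $\tilde f$ explicit, which the paper leaves implicit.
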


\begin{proof}
To prove that $\tilde{f}$ is injective, 
let $x_{1}, x_{2} \in \tilde{M}^{\prime}$ such that
$\tilde{f}(x_{1})=\tilde{f}(x_{2})$. 
There exists $y_{a} \in \tilde{M}$ such that $x_{a}=\pi_{\tilde{P}}(s(y_{a}), A^{-\frac{c}{2}})$ 
($a=1,2$). Since $\tilde{f}(x_{1})=\tilde{f}(x_{2})$ and (\ref{commut_1}), we have
$\hat{\tilde{\pi}}(s(y_{1}),A^{-\frac{c}{2}})=\pm \hat{\tilde{\pi}}(s(y_{2}),A^{-\frac{c}{2}})$, 
that is, $y_{1}=\pm y_{2}$, or equivalently 
$y_{1}=\varphi_{0}(y_{2})$ or $y_{1}=\varphi_{\pi}(y_{2})$. 
Therefore, we see that 
$(s(y_{1}),A^{-\frac{c}{2}})=(s(\varphi_{\delta}(y_{2})),A^{-\frac{c}{2}})$, where 
$\delta=0$ or $\pi$. Then we have $x_{1}=\pi_{\tilde{P}}(s(y_{1}), A^{-\frac{c}{2}})
=\pi_{\tilde{P}}(s(y_{2}), A^{-\frac{c}{2}})=x_{2}$, which means $\tilde{f}$ is injective. 
To show that $\tilde{f}$ is surjective, 
let $z \in \tilde{M}_{+}$ and choose $y \in \tilde{M}$ such that 
$z=\tilde{\pi}_{+}(y)$. By (\ref{commut_2}), we obtain 
$z=\tilde{\pi}_{+}(y)=\tilde{f}(\tilde{k}(y))$. 
Hence $\tilde{f}$ is surjective.  
The lift of $v \in T\tilde{M}^{\prime}$ to ${\cal H}$ 
is denoted by $v^{h^{\prime}}$. By 
\[ (I^{\prime}_{\alpha})_{x} (v)
=\pi_{\tilde{P}}{}_{\ast u}((\hat{I}^{\bar{\theta},c}_{\alpha})_{u}(v^{h^{\prime}}))
=\pi_{\tilde{P}}{}_{\ast u} ( ((I_{\alpha})_{\tilde{f}(x)}(\hat{\tilde{\pi}}_{\ast u}(v^{h^{\prime}})))^{h} ), 
\] 
then 
\begin{align*}
\tilde{f}_{\ast x}((I^{\prime}_{\alpha})_{x} (v))
&\stackrel{(\ref{commut_1})}{=} \tilde{\pi}_{+ \ast}(\hat{\tilde{\pi}}_{\ast u} (((I_{\alpha})_{\tilde{f}(x)}
((\hat{\tilde{\pi}}_{\ast u})(v^{h^{\prime}})))^{h})) \\
&= (I_{\alpha})_{\tilde{f}(x)}(\tilde{\pi}_{+ \ast}(\hat{\tilde{\pi}}_{\ast u}(v^{h^{\prime}})))
\stackrel{(\ref{commut_1})}{=}
(I_{\alpha})_{\tilde{f}(x)}( \tilde{f}_{\ast x} (v))
\end{align*}
at each point $x \in \tilde{M}^{\prime}$, 
where $u  \in \pi_{\tilde{P}}^{-1}(x) 
\cap (s(\tilde{M}) \times \{A^{-\frac{c}{2}}\})$. 
This shows that the hypercomplex manifolds $M^{\prime}$ 
and $M$ are isomorphic.  
\end{proof}

Set $F:= \tilde{f}\circ \pi_{\tilde{P}} :\tilde{P}  \to \tilde{M}_{+}$.
Hereafter we will denote the equivalence class 
with respect to the action of a group $K$ 
by $[ \, \cdot \,]_{K}$. 

\begin{lemma}\label{equi} 
We have 
$F(\gamma \cdot y)=\lambda \cdot F(y)$
for all $y \in {\tilde{P}}$. 
\end{lemma}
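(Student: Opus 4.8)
The plan is to reduce the identity to the slice $\Sigma := s(\tilde{M}) \times \{ A^{-c/2}\} \subset \tilde{P}$, on which $F$ is computed directly from (\ref{commut_1}), and then to follow explicitly how the natural lift $\hat{\gamma}$ of $\gamma$ (this is the action $y\mapsto \gamma\cdot y$) moves a point of $\Sigma$, followed by a compensating flow along $\hat{\tilde{X}}$ that brings the image back into $\Sigma$. I would first note that, since the volume form $\nu$ on $\tilde{M}$ is taken $\Gamma$-invariant (it is the pull-back of the $\mathrm{U}(1)$-invariant volume form chosen on $M=\tilde{M}/\Gamma$; concretely one may take $\|z\|^{-4n}$ times the flat form), the induced fibre coordinate $r$ and the connection $\bar{\theta}$ are preserved by $\hat{\gamma}$. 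Moreover $\gamma=\lambda A$ commutes with the flow of $\tilde{X}$ on $\tilde{M}$ (both being built from homotheties and from right multiplications by commuting elements of $\mathrm{U}(1)$), so $\hat{\gamma}$ commutes with the $\hat{\tilde{X}}$-flow and satisfies $\mu^{c}\circ\hat{\gamma}=\mu^{c}$; in particular $\hat{\gamma}(\tilde{P})=\tilde{P}$. Since $F$ factors through $\pi_{\tilde{P}}$, and since every $\hat{\tilde{X}}$-orbit in $\tilde{P}$ meets $\Sigma$ (apply a suitable time of the $\hat{\tilde{X}}$-flow to a point whose frame component is $s(z)g_{\tau}$, as in the proof of Proposition \ref{double}), it suffices to prove the identity for $y\in\Sigma$.

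For $y=(x,s(x),A^{-c/2})\in\Sigma$ I would compute $\hat{\gamma}(y)$ from the definition of the natural lift. Writing $q=e^{i\theta_{0}}$, conjugation by $\gamma_{*}$ fixes $\tilde{I}_{1}=R_{i}$ and rotates the pair $(\tilde{I}_{2},\tilde{I}_{3})$ by the angle $2\theta_{0}$, i.e.\ $\gamma_{*}\circ s(x)\circ\gamma_{*}^{-1}=s(\gamma x)\,g_{\theta_{0}}$ with $g_{\theta_{0}}$ as in (\ref{gt:eq}); since $\gamma$ also preserves $r$, this gives $\hat{\gamma}(y)=(\gamma x,\,s(\gamma x)g_{\theta_{0}},\,A^{-c/2})\in\tilde{P}$, a point over $\gamma x$ whose frame component lies in $\mathrm{U}(1)_{Z_{1}}$. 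Flowing along $\hat{\tilde{X}}$ for time $-\varepsilon\theta_{0}$ turns the frame matrix into $g_{0}=\mathrm{id}$, hence lands inside $\Sigma$, over the base point $(\lambda xq)e^{\varepsilon i(-\varepsilon\theta_{0})}=\lambda x\,(q e^{-i\theta_{0}})=\lambda x$.

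Finally, applying (\ref{commut_1}) to this point of $\Sigma$ and to $y\in\Sigma$, and using that $F$ is constant along $\hat{\tilde{X}}$-orbits, I obtain $F(\gamma\cdot y)=F(\hat{\gamma}(y))=\tilde{\pi}_{+}(\lambda x)=\lambda\cdot\tilde{\pi}_{+}(x)=\lambda\cdot F(y)$, and the general case then follows from the reduction above. I expect the middle step to be the main obstacle: one has to determine exactly which $\mathrm{SO}(3)$-rotation $\hat{\gamma}$ produces on the admissible frame and check that it lands in $\mathrm{U}(1)_{Z_{1}}$ (so that it can be undone inside $\tilde{P}$ by the $\hat{\tilde{X}}$-flow, rather than leaving the level set), and to keep careful track of the signs and of $\varepsilon$, so that the quaternionic identity $qe^{-i\theta_{0}}=1$ is precisely what cancels the $A=R_{q}$ factor of $\gamma$ and leaves only the homothety $\lambda$.
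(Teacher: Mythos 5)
Your argument is correct and is essentially the paper's own proof: both identify the frame rotation $\rho(q)=g_{\theta_{0}}\in\mathrm{U}(1)_{Z_{1}}$ produced by $\hat{\gamma}$ on a point of the level set, undo it inside $\tilde{P}$ by the $\hat{\tilde{X}}$-flow, and observe that the compensating factor $e^{-i\theta_{0}}$ cancels $q$ on the base, leaving only the homothety $\lambda$, after which (\ref{commut_1}) gives the claim. The only cosmetic difference is that you first reduce to the slice $s(\tilde{M})\times\{A^{-\frac{c}{2}}\}$ via the commutation of $\hat{\gamma}$ with the $\hat{\tilde{X}}$-flow, whereas the paper computes directly with the $\langle\hat{\tilde{X}}\rangle$-equivalence class of a general point $(p,\tilde{H}g,A^{-\frac{c}{2}})$ of $\tilde{P}$.
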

\begin{proof}
For any point $y=(p,\tilde{H}g,A^{-\frac{c}{2}})$ 
($p \in \tilde{M}$ and $g \in \mathrm{U}(1)$) 
of $\tilde{P}$, we have  
$\gamma \cdot y=(\lambda p i, \tilde{H}g \rho(i),A^{-\frac{c}{2}})$  
by (\ref{level_set}), 
where $\rho:\mathrm{Sp}(1) \to \mathrm{SO}(3)$ 
is the standard double covering. Therefore 
we obtain 
\[ 
\gamma \cdot [y]_{\langle \hat{\tilde{X}} \rangle} 
=[( \lambda p i,\tilde{H}g \rho(i),
A^{-\frac{c}{2}})]_{\langle \hat{\tilde{X}} \rangle}
=[(\pm \lambda p \hat{g}^{-1},
\tilde{H},A^{-\frac{c}{2}})]_{\langle \hat{\tilde{X}} \rangle}, 
\]
where $\hat{g} \in \mathrm{Sp}(1)$ such that $\rho(\hat{g})=g$. 
Then it holds
\begin{align*}  
F(\gamma \cdot y)
=\tilde{f}(\gamma \cdot [y]_{\langle \hat{\tilde{X}} \rangle})
=\tilde{f}([(\pm \lambda p\hat{g}^{-1},
\tilde{H,}A^{-\frac{c}{2}})]_{\langle \hat{\tilde{X}} \rangle}) 
=\tilde{\pi}_{+} (\pm \lambda p\hat{g}^{-1})
=\lambda \tilde{f}([y]_{\langle \hat{\tilde{X}} \rangle})
=\lambda F(y)
\end{align*}
from (\ref{commut_1}). 
\end{proof}

Note that $M^{\prime}$ has an induced $\{ \pm 1 \}$-action, since  
the lifted action of $\{ \pm 1 \}$ to the Swann bundle $\hat{\tilde{M}}$
commutes with 
$\Gamma$ and $\mathrm{U}(1)$. 
Let $\pi^{\prime}_{+} : M^{\prime} \to M^{\prime}_{+}$ be the 
quotient map of the action by $\{ \pm 1 \}$ on $M^{\prime}$.  
We can define a map 
\[ \Phi :M^{\prime}_{+}(=\pi^{\prime}_{+}(M^{\prime})) 
\to \tilde{M}_{+} / \langle \lambda \rangle \]
as follows. 
Take any $x \in M^{\prime}_{+}$. 
Then there exists 
$y \in \tilde{P}$ such that $x=\pi^{\prime}_{+}(\pi_{P}([y]_{\Gamma}))$ and we set 
$\Phi(x):=[F(y)]_{\langle \lambda \rangle}$. 
We shall show that $[F(y)]_{\langle \lambda \rangle}$ is independent of the choice of $y$.
If 
$(x=)\pi^{\prime}_{+}(\pi_{P}([y_{1}]_{\Gamma}))=\pi^{\prime}_{+}(\pi_{P}([y_{2}]_{\Gamma}))$, 
there exist $\delta \in \{ \pm 1 \}$, $g \in \mathrm{U}(1)$ and $l \in \mathbb{Z}$ 
such that $y_{1}=\delta \cdot g \cdot \gamma^{l} \cdot y_{2}$. 
By Lemma \ref{equi} and the definitions of $\tilde{f}$ and $\pi_{\tilde{P}}$,
we see 
\[ F(y_{1})=F(\delta \cdot g \cdot \gamma^{l} \cdot y_{2})=\lambda^{l} F(y_{2}), \] 
which implies $[F(y_{1})]_{\langle \lambda \rangle}
=[F(y_{2})]_{\langle \lambda \rangle}$.  
Moreover we have 

\begin{lemma}\label{isomor}
The map $\Phi :M^{\prime}_{+} 
\to \tilde{M}_{+} / \langle \lambda \rangle$ is an isomorphism.
\end{lemma}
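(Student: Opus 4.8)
The plan is to show that $\Phi$ is a well-defined smooth bijection that intertwines the hypercomplex structures, hence an isomorphism of hypercomplex manifolds. Well-definedness has already been established in the discussion preceding the statement, so the remaining work is to verify that $\Phi$ is a bijection and that it is compatible with the hypercomplex structures; smoothness of $\Phi$ and its inverse will follow from the fact that all the intermediate maps ($\pi_{\tilde{P}}$, $\tilde{f}$, $\pi_{+}'$, the covering $\tilde{k}$, the quotients by $\langle\lambda\rangle$ and by $\{\pm 1\}$) are smooth submersions or local diffeomorphisms, so $\Phi$ and $\Phi^{-1}$ are built from these by passing to quotients.

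First I would prove surjectivity: given $[z]_{\langle\lambda\rangle} \in \tilde{M}_{+}/\langle\lambda\rangle$, pick $z \in \tilde{M}_{+}$, lift it to $\tilde{M}$, apply the double covering $\tilde{k}:\tilde{M}\to\tilde{M}'$ from Proposition \ref{double} (equivalently use $\tilde{f}\circ\tilde{k}=\tilde{\pi}_{+}$ from (\ref{commut_2})), and descend through $\pi_P$ and $\pi_{+}'$ to produce a preimage in $M'_{+}$; the identity $\tilde{f}\circ\pi_{\tilde{P}}=\tilde{\pi}_{+}\circ\hat{\tilde{\pi}}$ in (\ref{commut_1}) guarantees $\Phi$ of that point is $[z]_{\langle\lambda\rangle}$. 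For injectivity, suppose $\Phi(x_1)=\Phi(x_2)$ with $x_a=\pi_{+}'(\pi_P([y_a]_{\Gamma}))$, $y_a\in\tilde{P}$. Then $[F(y_1)]_{\langle\lambda\rangle}=[F(y_2)]_{\langle\lambda\rangle}$, so $F(y_1)=\lambda^l F(y_2)$ for some $l\in\mathbb{Z}$; using Lemma \ref{equi} this says $F(y_1)=F(\gamma^l\cdot y_2)$, and since $F=\tilde{f}\circ\pi_{\tilde{P}}$ with $\tilde{f}$ an isomorphism (Lemma \ref{ExHypcomx}), one gets $\pi_{\tilde{P}}(y_1)=\pi_{\tilde{P}}(\gamma^l\cdot y_2)$, i.e.\ $y_1$ and $\gamma^l\cdot y_2$ differ by the $\langle\hat{\tilde{X}}\rangle$-action; tracking this through the further quotients by $\Gamma$ and $\{\pm1\}$ (and absorbing the $\{\pm1\}$-ambiguity, which also acts on $\tilde{P}$ as observed before the lemma) yields $x_1=x_2$.

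Finally I would check that $\Phi$ intertwines the hypercomplex structures. This is essentially inherited: $\tilde{f}$ is a hypercomplex isomorphism by Lemma \ref{ExHypcomx}, and the hypercomplex structure on $M'$ (resp.\ on $M'_{+}$, $\tilde{M}_{+}/\langle\lambda\rangle$) is the one descending from $\hat{M}$ (resp.\ from $\tilde M$) through the reduction and the discrete quotients, all of which are local diffeomorphisms compatible with the structures. Concretely, using horizontal lifts as in the proof of Lemma \ref{ExHypcomx} together with $\tilde{f}_{\ast}\circ I'_{\alpha}=I_{\alpha}\circ\tilde{f}_{\ast}$ and the fact that $\pi_{+}'$, the covering maps, and the $\langle\lambda\rangle$-quotient all commute with the respective complex structures, one obtains $\Phi_{\ast}\circ I'^{+}_{\alpha}=I_{\alpha}\circ\Phi_{\ast}$ pointwise.

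The main obstacle I expect is bookkeeping the several overlapping group actions — $\langle\hat{\tilde{X}}\rangle\cong\mathrm{U}(1)$, $\Gamma=\langle\gamma\rangle$, $\{\pm1\}$, and $\langle\lambda\rangle$ — and making sure that the various quotient maps genuinely commute in the way needed, particularly that the $\{\pm1\}$-action on $M'$ used to form $M'_{+}$ corresponds correctly under $F$ to the $\{\pm1\}$-action already present on $\tilde{M}_{+}$, so that the equivariance in Lemma \ref{equi} upgrades to a clean statement about $\Phi$ rather than just $F$. Once the commuting-squares diagram is set up carefully, injectivity and surjectivity are short diagram chases and the hypercomplex compatibility is a one-line consequence of Lemma \ref{ExHypcomx}.
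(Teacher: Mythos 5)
Your proposal follows essentially the same route as the paper: surjectivity by lifting through $F=\tilde f\circ\pi_{\tilde P}$, injectivity via $F(y_1)=\lambda^l F(y_2)=F(\gamma^l\cdot y_2)$ (Lemma \ref{equi}) together with the injectivity of $\tilde f$ to conclude $y_1=g\cdot\gamma^l\cdot y_2$ for some $g\in\mathrm{U}(1)$, and compatibility of the hypercomplex structures from their invariance under all the group actions involved. The paper's argument is just a terser version of the same diagram chase, so the proposal is correct and not substantively different.
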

\begin{proof}
To prove that $\Phi$ is injective, let 
$x_{1}$, $x_{2} \in M^{\prime}_{+}=\pi^{\prime}_{+}(M^{\prime})$
such that $\Phi(x_{1})=\Phi(x_{2})$. 
There exists $y_{1}$, $y_{2} \in \tilde{P}$ such that 
$x_{a}=\pi^{\prime}_{+}(\pi_{P}([y_{a}]_{\Gamma}))$ ($a=1,2$). 
Since 
$[F(y_{1})]_{\langle \lambda \rangle}=[F(y_{2})]_{\langle \lambda \rangle}$, 
there exists $l \in \mathbb{Z}$ such that 
$F(y_{1})=\lambda^{l} \cdot F(y_{2})=F(\gamma^{l} \cdot y_{2})$. 
Then we have $\tilde{f}(\pi_{\tilde{P}}(y_{1}))
=\tilde{f}(\pi_{\tilde{P}}(\gamma^{l} \cdot y_{2}))$, so 
there exists $g \in \mathrm{U}(1)$ such that 
$y_{1}=g \cdot \gamma^{l} \cdot y_{2}$. 
Therefore we obtain
\[ x_{1}=\pi^{\prime}_{+}(\pi_{P}([y_{1}]_{\Gamma}))
          =\pi^{\prime}_{+}(\pi_{P}([g \cdot \gamma^{l} \cdot y_{2}]_{\Gamma}))
          =\pi^{\prime}_{+}(\pi_{P}([y_{2}]_{\Gamma}))
          =x_{2}. \]
So $\Phi$ is injective. 
Next we shall show that $\Phi$ is surjective. 
Take any $z \in \tilde{M}_{+} / \langle \lambda \rangle$. 
There exists $y \in \tilde{P}$ such that $z=[ F(y)]_{\langle \lambda \rangle}$.  
Setting $x=\pi^{\prime}_{+}(\pi_{P}([y]_{\Gamma}))$, we have $\Phi(x)=z$, 
which means $\Phi$ is surjective. 
Since the hypercomplex structures are invariant under actions of all groups 
$\mathrm{U}(1) = \langle \hat{X} \rangle 
= \langle \hat{\tilde{X}} \rangle$, 
$\Gamma$, $\langle \lambda \rangle$ and $\{ \pm 1\}$ 
in the argument,  $\Phi$ is a hypercomplex isomorphism. 
\end{proof}
\vspace{-5mm}
\begin{eqnarray*}
  \begin{diagram}
    \node[2]{\tilde{P} } \arrow[1]{e,t}{/ \, \Gamma}
    \arrow{sw,t}{\pi_{\tilde{P}}}
                  \node[1]{P} 
                  \arrow{se,t}{\pi_{P}}\\
    \node[1]{\tilde{M}^{\prime}} 
     \arrow[1]{se,t}{ \tilde{f}}
    \node[1] {\tilde{M}} 
    \arrow[1]{n,r}{s \times A^{-c/2}}
    \arrow[1]{w,tb}{\tilde{k}}
                 {\mbox{\tiny (as in Lemma \ref{ExHypcomx})}}
    \arrow[1]{e,t}{/ \, \Gamma} 
    \arrow[1]{s,l}{\tilde{\pi}_{+} }
    \node[1]{M=\tilde{M} / \Gamma}
    \arrow[1]{e,b,..} {\mbox{\tiny Q/H-corresp.}}
    \arrow[1]{s,l}{\pi_{+} }
    \node[1] {M^{\prime} \cong \tilde{M}/\langle \lambda \rangle}
    \arrow[1]{s,l}{\pi^{\prime}_{+} } \\
    \node[2]{\tilde{M}_{+}}
    \arrow[1]{e,t}{/ \, \Gamma}
    \arrow[1]{se,l}{/ \, \langle \lambda \rangle}
    \node[1]{M_{+}=\tilde{M}_{+}/\Gamma}
    \arrow{e,b,..} {\mbox{\tiny Q/H-corresp.}}
    \node[1]{M_{+}^{\prime}=\pi^{\prime}_{+} (M^{\prime})} 
    \arrow[1]{sw,tb}{\Phi}{\cong}\\
    \node[3]{ \tilde{M}_{+}/\langle \lambda \rangle}
  \end{diagram}
\end{eqnarray*}
Therefore, by Lemma \ref{isomor}, the hypercomplex manifold $M^{\prime}_{+}$
obtained from $M_{+}$ by the Q/H-correspondence is identified with  
$\tilde{M}_{+}/ \langle \lambda \rangle$. 
Since $M^{\prime}$ is the double covering space of $M_{+}^{\prime}$, 
we have
\[ M^{\prime} \cong \tilde{M} /\langle \lambda \rangle. \]
The centralizer $G^{H}$ of $\lambda$ is 
${\rm GL}(n,\mathbb H)={\mathbb R}^{>0} \times {\rm SL}(n,\mathbb H)$ 
and take a subgroup ${\mathbb R}^{>0} \times {\rm Sp}(n)$ of $G^{H}$. 
As we explained, $\tilde{M} / \langle \lambda \rangle$ can be expressed by 
the homogeneous space
\[ \tilde{M} / \langle \lambda \rangle
=({\mathbb R}^{>0} / \langle \lambda \rangle) \times
    \frac{{\rm Sp}(n)}{{\rm Sp}(n-1)}. 
\]
Finally, we summarize the discussion as follows.

\begin{example}\label{Hopf}
{\rm 
The hypercomplex manifold
\[ M^{\prime}=({\mathbb R}^{>0} / \langle \lambda \rangle) \times 
   \frac{{\rm Sp}(n)}{{\rm Sp}(n-1)} 
\]
is obtained by 
the Q/H-correspondence from 
the quaternionic manifold
\[ M=({\mathbb R}^{>0} / \langle \lambda \rangle) \times
    \frac{{\rm Sp}(n) {\rm U}(1)}{{\rm Sp}(n-1) \triangle_{{\rm U}(1)}}. 
\]
(Note that we are  considering the invariant quaternionic 
{\rm (}resp.\ hypercomplex{\rm )} 
structure on $M$ {\rm (}resp. $M^{\prime}${\rm )} described above.)
}
\end{example}

We remark that $M^{\prime}$ does not admit any hyper-K{\"a}hler structure 
for topological reasons, since $M^{\prime}$ is diffeomorphic to 
$S^{1} \times S^{4n-1}$. Therefore our Q/H-correspondence 
yields examples which can not appear in 
the QK/HK-correspondence.





\vspace{8mm}
\noindent
{\bf Acknowledgments.}
Research by the first author is partially funded by the Deutsche Forschungsgemeinschaft (DFG, German Research 
Foundation) under Germany's
Excellence Strategy -- EXC 2121 Quantum Universe -- 390833306. 
This paper was prepared during the second author's stay at 
Universit{\"a}t Hamburg on his sabbatical leave (April 2018 - March 2019). 
He would like to thank  the Department of Mathematics 
of Universit{\"a}t Hamburg for the hospitality. 
His research is partially supported by 
JSPS KAKENHI Grant Number 18K03272.


\noindent
Vicente Cort{\' e}s \\
Department of Mathematics \\
and Center for Mathematical Physics \\
University of Hamburg \\
Bundesstra\ss e 55, \\
D-20146 Hamburg, Germany. \\ 
email:vicente.cortes@uni-hamburg.de \\

\noindent
Kazuyuki Hasegawa \\
Faculty of teacher education \\
Institute of human and social sciences \\
Kanazawa university \\
Kakuma-machi, Kanazawa, \\
Ishikawa, 920-1192, Japan. \\
e-mail:kazuhase@staff.kanazawa-u.ac.jp

\end{document}